\numberwithin{equation}{section}
\title{\textbf{Periodic minimum in the count of binomial coefficients 
not divisible by a prime}}
\author{Hsien-Kuei Hwang \\
    Institute of Statistical Science \\
    Academia Sinica\\
    Taipei 115\\
    Taiwan
\and Svante Janson\\ 
    Department of Mathematics\\ 
    Uppsala University\\ 
    Uppsala\\ 
    Sweden
\and Tsung-Hsi Tsai \\
    Institute of Statistical Science \\
    Academia Sinica\\
    Taipei 115\\
    Taiwan}
\date{\today} 
\theoremstyle{plain}
\newtheorem{theorem}{Theorem}[section]
\newtheorem{lemma}[theorem]{Lemma}
\newtheorem{corollary}[theorem]{Corollary}
\newtheorem{problem}[theorem]{Problem}
\theoremstyle{definition}
\newcommand\xqed[1]{%
    \leavevmode\unskip\penalty9999 \hbox{}\nobreak\hfill
    \quad\hbox{#1}}
\newtheorem{exampleqqq}[theorem]{Example}
\newtheorem{remarkqqq}[theorem]{Remark}
\newenvironment{remark}{\begin{remarkqqq}}
  {\xqed{$\triangle$}\end{remarkqqq}}
\newcommand{\refT}[1]{Theorem~\ref{#1}}
\newcommand{\refL}[1]{Lemma~\ref{#1}}
\newcommand{\refR}[1]{Remark~\ref{#1}}
\newcommand{\refS}[1]{Section~\ref{#1}}
\newcommand\set[1]{\ensuremath{\{#1\}}}
\newcommand\bigpar[1]{\bigl(#1\bigr)}
\newcommand\Bigpar[1]{\Bigl(#1\Bigr)}
\newcommand\lrpar[1]{\left(#1\right)}
\def\rompar(#1){\textup(#1\textup)}    
\def\xexp(#1){e^{#1}}
\newcommand\floor[1]{\lfloor#1\rfloor}
\newcommand\frax[1]{\{#1\}}
\newcommand\ntoo{\ensuremath{{n\to\infty}}}
\newcommand\bbR{\mathbb R}
\newcounter{CC}
\newcounter{cc}
\newcommand\gb{\beta}
\newcommand\gd{\delta}
\newcommand\gD{\Delta}
\newcommand\gf{\varphi}
\renewcommand\phi{\xxx}  
\newcommand\cP{\mathcal P}
\newcommand\tb{\tilde b}
\newcommand\qw{^{-1}}
\newcommand\oi{\ensuremath{[0,1]}}
\newcommand\hb{{\hat{b}}}
\newcommand\hs{{\hat s}}
\newcommand\rhop{{\varrho_p}}
\newcommand\eqtext[1]{\quad\text{#1}\quad}
\begin{document}

\maketitle
\begin{abstract}

The summatory function of the number of binomial coefficients not
divisible by a prime is known to exhibit regular periodic
oscillations, yet identifying the less regularly behaved minimum of
the underlying periodic functions has been open for almost all cases.
We propose an approach to identify such minimum in some generality,
solving particularly a previous conjecture of B.~Wilson [Asymptotic
behavior of Pascal’s triangle modulo a prime, \emph{Acta Arith.}
\textbf{83} (1998), pp.~105--116].

\end{abstract}

\maketitle

\section{Introduction }

Let $F_p(n)$ denote the number of binomial coefficients
$\binom{m}{k}$, $0\leq k\leq m<n$, that are not divisible by a given
prime $p$. In particular, for $p=2$, $F_2(n)$ is the number of odd
numbers in the first $n$ rows of Pascal's triangle. The study of the
quantity $F_p(n)$ has a long history; see, for example, the
historical account in Stolarsky's paper \cite{sto}. Some sequences of
$F_p(n)$ appear in the On-Line Encyclopedia of Integer Sequences
(OEIS) \cite{oe}: A006046 ($p=2$), A006048 ($p=3$), and A194458
($p=5$).

Fine \cite{fi} proved that ``almost all'' binomial coefficients are
divisible by a prime $p$, more precisely that
\begin{equation}
	\lim_{n\rightarrow\infty}
	\frac{F_p(n)}{\binom{n+1}{2}}=0.
	\label{fine1}
\end{equation}
He \cite{fi} also gave the expression (with $n=\sum_{0\le i\le s} b_i2^i$, $b_i\in\{0,1\}$)
\begin{equation}
	F_p(n+1)-F_p(n)=\prod_{0\leq i\leq s}(b_i+1)
	\label{fine2}
\end{equation}
for the number of binomial coefficients $\binom nk$, $0\le k\le n$, 
not divisible by $p$.

Later Stein \cite{st} observed that, for any prime $p$,
\begin{align}\label{stein}
  F_p(pn)=\binom{p+1}2F_p(n),
\qquad n\ge1,
\end{align}
Thus the sequence $\psi_p(n):=F_p(n)/n^\rhop$, where
$\varrho_p=\log_p\binom{p+1}{2}$, satisfies $\psi_p(pn)=\psi_p(n)$,
so that $\psi_p$ can be extended by this property to all positive
$p$-adic rational numbers. Stein \cite{st} also showed that $\psi_p$
can be further extended to a continuous function on $(0,\infty)$; in 
other words, there exists a continuous 1-periodic function $\cP_p(t)$ 
on $\bbR$ such that
\begin{align}\label{Pp}
	F_p(n)=n^\rhop \cP_p(\log_p n),
	\qquad n\ge1. 
\end{align}
It follows immediately from this that
\begin{align}\label{gap}
    \alpha_p
	&:=\limsup_\ntoo\frac{F_p(n)}{n^\rhop}
	=\sup_{n\ge1}\frac{F_p(n)}{n^\rhop}
	=\max_{t\in\oi}\cP_p(t)
	\in[0,\infty),
	\\ \label{gbp}
    \gb_p&:=\liminf_\ntoo\frac{F_p(n)}{n^\rhop}
	=\inf_{n\ge1}\frac{F_p(n)}{n^\rhop}
	=\min_{t\in\oi}\cP_p(t)
	\in[0,\infty),
\end{align}
furthermore, $\alpha_p=1$ for every $p$, and that $\binom{p+1}2\qw \le
\gb_p<1$; see \cite{st}. The extremal properties of $\cP_2$ had
earlier been treated by Stolarsky \cite{sto} and Harborth \cite{ha}.
In particular, Harborth proved that $\alpha_2=1$ and derived the
numerical value $\gb_2\doteq0.812556$ to 6 decimal places; see
A077464 (Stolarsky-Harborth constant) for more information. Further
numerical estimates of $\gb_p$ for various $p$ have been made later;
of special mention is Chen and Ji's inequalities \cite{ch1}:
\begin{align}
	\frac1{(1+p^{-r})^{\rhop}}
	\min_{p^r\le n\le p^{r+1}}
	\frac{F_p(n)}{n^{\rhop}}
	\le \beta_p \le 
	\min_{p^r\le n\le p^{r+1}}
	\frac{F_p(n)}{n^{\rhop}},
\end{align}
which in principle makes it possible to calculate $\gb_p$ to any
given degree of precision. However, an exact expression remains 
unknown.

For $p\ge3$, Volodin \cite{vo2} conjectured that
\begin{equation}\label{beta3}
	\beta_3
	=\left(\frac{3}{2}\right)^{1-\varrho_3}
	=2^{\log_32-1},
\end{equation}
which was proved by Franco \cite{fr}; however, his proof does not 
extend to other primes $p$.

Wilson \cite{wi1} calculated $\beta_3,\beta_{5},\ldots,\beta_{19}$
to six decimal places and showed that
\begin{equation}\label{wilson0.5}
	\lim_{p\rightarrow\infty}\beta_p=0.5.
\end{equation}
He furthermore conjectured that
\begin{equation}
	\beta_{5}=\left(\frac{3}{2}\right)^{1-\varrho_{5}},
	\qquad\beta_{7}=\left(\frac{3}{2}\right)^{1-\varrho_{7}},
	\qquad\beta_{11}=\frac{59}{44}
	\left(\frac{22}{31}\right)^{\varrho_{11}}.
	\label{wc3}
\end{equation}

The main purpose of the present paper is to prove this conjecture,
and to give similar results for further primes $p$. More precisely,
by a detailed examination of the periodic function $\cP_p(x)$,
coupling with analytic bounds and numerical calculations, we are able
to find the minimum $\beta_p$ for all odd primes $3\le p\le 113$,
proving particularly Wilson's conjectures \eqref{wc3} and differently
\eqref{beta3}. Our approach can be readily extended to higher values
of $p$, but a proof for all odd primes $p$ remains open.

In our approach we fix an odd prime $p$.
After a change of variables (see \refS{Sour} for details), we obtain 
$\gb_p=\min_{s\in[p\qw,1]} G(s)$ for the function $G(s)=G_p(s)$ defined in
\eqref{maj2}. The main part of our argument is to show that (for the primes
$p$ that we have studied, at least) this minimum is attained at 
the point
\begin{align}\label{E:hat-sp}
	\hat{s}_p 
	=\hat{s}_p(\xi,\eta)
	:= \frac{2\xi+1}{2p}-\frac{\eta}{p^2},
\end{align}
for a suitable pair of integers $(\xi,\eta)$ with $1\le\xi<p$ and
$0\le\eta\le\frac{p-1}{2}$.
In terms of $p$-ary expansion, 
\begin{align}\label{E:hat-sp2}
\hat{s}_p = (0.b_1b_2\dots), 
\quad\text{where } b_1=\xi,\; 
b_2=\tfrac{p-1}2-\eta, \text{ and } b_j=\tfrac{p-1}2 \text{ for } j\ge3. 
\end{align}
When this holds, we thus have $\gb_p=B_{\xi,\eta}:=G(\hat{s}_p)$,
which explicitly is given by (see \refL{L:G-hat-s})
\begin{align}\label{E:Bml}
	B_{\xi,\eta} = 
	\binom{\xi+1}{2}\lrpar{1+
	\frac{(p-2\eta)(p-2\eta+1)}{2\xi p(p+1)}}
	\lrpar{\frac{2\xi+1}{2}
	- \frac{\eta}{p}}^{-\varrho_p}.
\end{align}
One complication is that the correct choice of $(\xi,\eta)$ is not obvious
and depends on $p$, as is illustrated in the following theorem, which is our
main result.

\begin{theorem}\label{T1}
Wilson's conjecture \eqref{wc3} holds true.
More generally, 
for an odd prime $p$, $3\le p\le113$, 
we have $\gb_p=B_{\xi,\eta}$
where the pair of values $(\xi,\eta)$ 
(together with $\hat{s}_p(\xi,\eta)$) are given in the following 
table:
\begin{align}\label{T1-3-0}
\renewcommand{\arraystretch}{1.5}
\begin{tabular}{c}
\begin{tabular}{ccccc} 
$p$ & $\{3,5,7\}$ & $\{11,13,17,19,23\}$ & $\{29\}$
& $\{31,37,41,43,47,53\}$ \\ \hline
$(\xi,\eta)$ & $(1,0)$ & $(1,1)$ & $(2,1)$ & $(2,2)$ \\
$\hat{s}_p(\xi,\eta)$ & $\frac{3}{2p}$ 
& $\frac{3}{2p}-\frac{1}{p^2}$ 
& $\frac{5}{2p}-\frac{1}{p^2}$
& $\frac{5}{2p}-\frac{2}{p^2}$ \\ \hline\hline
\end{tabular}\\
\begin{tabular}{ccccc}
$p$ & $\{59,61,67,71,73,79\}$
& $\{83,89,97,101,103,107\}$
& $\{109,113\}$ \\ \hline
$(\xi,\eta)$ & $(2,3)$ & $(2,4)$ & $(2,5)$ \\
$\hat{s}_p(\xi,\eta)$ 
& $\frac{5}{2p}-\frac{3}{p^2}$
& $\frac{5}{2p}-\frac{4}{p^2}$
& $\frac{5}{2p}-\frac{5}{p^2}$
\end{tabular}
\end{tabular}
\end{align}
\end{theorem}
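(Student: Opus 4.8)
\emph{Proof strategy.}
For each prime $p$ with $3\le p\le113$ and the pair $(\xi,\eta)$ listed in \eqref{T1-3-0}, the plan is to establish the two matching bounds $\gb_p\le B_{\xi,\eta}$ and $\gb_p\ge B_{\xi,\eta}$; Wilson's conjectured values \eqref{wc3} and Franco's value \eqref{beta3} then follow by evaluating \eqref{E:Bml}. The upper bound is immediate: $\gb_p=\min_{s\in[p\qw,1]}G(s)\le G\bigpar{\hat s_p(\xi,\eta)}=B_{\xi,\eta}$ by \refL{L:G-hat-s}. Thus the whole content is the lower bound --- that the minimum of $G$ on $[p\qw,1]$ is attained at $\hat s_p$:
\begin{equation}\label{eq:plan-key}
  G(s)\ge B_{\xi,\eta}\qquad\text{for all }s\in[p\qw,1].
\end{equation}
The pair $(\xi,\eta)$ is itself produced by a finite search (tabulate $B_{\xi,\eta}$ from \eqref{E:Bml} over $1\le\xi<p$, $0\le\eta\le\tfrac{p-1}{2}$, take the smallest) and is then certified by \eqref{eq:plan-key}; the table in \eqref{T1-3-0} records the outcome.

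The basic tool is the self-similar recursion $G$ inherits from $\psi_p(pn)=\psi_p(n)$ and Fine's formula \eqref{fine2}. With $g(s)\=s^{\rhop}G(s)$ and $T\=\binom{p+1}{2}=p^{\rhop}$, writing $s=(b_1+s')/p$ with $b_1=\floor{ps}\in\{0,\dots,p-1\}$ and $s'\in[0,1]$,
\begin{equation}\label{eq:plan-rec}
  g\Bigpar{\tfrac{b_1+s'}{p}}=\frac{\binom{b_1+1}{2}+(b_1+1)\,g(s')}{T},
  \qquad
  G\Bigpar{\tfrac{b_1+s'}{p}}=\frac{\binom{b_1+1}{2}+(b_1+1)\,g(s')}{(b_1+s')^{\rhop}};
\end{equation}
moreover $g\colon[0,1]\to[0,1]$ is continuous and increasing with $g(0)=0$, $g(1)=1$, $g\bigpar{\tfrac12}=\tfrac14$ (apply \eqref{eq:plan-rec} with $b_1=\tfrac{p-1}{2}$, $s'=\tfrac12$), and $\gb_p=\min_{(0,1]}G$ (prepending a zero digit leaves $g(s)/s^{\rhop}$ unchanged, so we may assume the minimizer lies in $[p\qw,1]$). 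I would prove \eqref{eq:plan-key} by contradiction: assume $\gb_p<B_{\xi,\eta}$ and pick $s^*\in[p\qw,1]$ with $G(s^*)=\gb_p$. Iterating \eqref{eq:plan-rec} along the base-$p$ digits $b_1,b_2,\dots$ of $s^*$ and using $g\bigpar{\sigma^js^*}\ge\gb_p\,(\sigma^js^*)^{\rhop}$ at the successive shifts $\sigma^js^*$ gives, for every $k$,
\begin{equation}\label{eq:plan-bd}
  \gb_p\,\Bigpar{(a_k+t_k)^{\rhop}-c_k\,t_k^{\rhop}}\;\ge\;N_k,
\end{equation}
with $t_k=\sigma^ks^*\in[0,1]$, $a_k=\sum_{i\le k}b_ip^{k-i}$, $c_k=\prod_{i\le k}(b_i+1)$ and $N_k=\sum_{j\le k}\bigpar{\prod_{i<j}(b_i+1)}\binom{b_j+1}{2}T^{k-j}$; since $\gb_p>0$ and $N_k>0$ for large $k$, the bracket is automatically positive, so \eqref{eq:plan-bd} yields $\gb_p\ge N_k\big/\max_{t\in[0,1]}\bigpar{(a_k+t)^{\rhop}-c_kt^{\rhop}}$ for the actual digit string of $s^*$, hence $\gb_p$ is bounded below by the minimum of this quantity over all digit strings --- an exactly computable number.

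When $p=3$ this already suffices at $k=1$: using the arithmetic identity $p^{\rhop-1}=\tfrac{p+1}{2}$, one finds that for $b_1=1$ the maximum of $t\mapsto(1+t)^{\rhop}-2t^{\rhop}$ on $[0,1]$ is attained at $t=\tfrac12$ and the resulting bound equals $B_{1,0}=2^{\rhop-2}$, while for $b_1=2$ it exceeds $B_{1,0}$, contradicting $\gb_3<B_{1,0}$; this recovers \eqref{beta3}. For $p\ge5$ the bound \eqref{eq:plan-bd} is no longer tight --- the estimate $g\ge\gb_p(\cdot)^{\rhop}$ is lossy because the orbit of the true minimizer $\hat s_p=(0.\xi\,b_2\,\overline{\tfrac{p-1}2})$ under $\sigma$ terminates at the $\sigma$-fixed point $\tfrac12$, which is \emph{not} itself a minimizer of $G$ --- and the argument must be sharpened. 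For digit strings whose tail differs from $\overline{\tfrac{p-1}2}$, a finite computation still forces $\gb_p\ge B_{\xi,\eta}$ with room to spare, the Chen--Ji inequalities and the monotonicity of $g$ on base-$p$ sub-cells keeping it finite (refining the depth where needed). The one string that resists is the tail $\overline{\tfrac{p-1}2}$, i.e.\ the points near $\hat s_p$: there one replaces $g\ge\gb_p(\cdot)^{\rhop}$ by the exact local self-affinity $g\bigpar{\tfrac12+\tau}-\tfrac14=\tfrac1p\bigpar{g(\tfrac12+p\tau)-\tfrac14}$ (valid for $\abs\tau<\tfrac1{2p}$), combined with the convexity of $x\mapsto(1+x)^{\rhop}$ and the invariants $a_k+\tfrac12=p^{\,k-2}(a_2+\tfrac12)$, $N_k+c_k/4=T^{\,k-2}(N_2+c_2/4)$ that hold along this pattern, in order to push the bound up to exactly $B_{\xi,\eta}$ while reducing the residual verification to a finite, numerically checkable inequality for $g$ on a compact set bounded away from $\tfrac12$.

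The step I expect to be the genuine obstacle is precisely this last one: certifying that the evident local minimum of $G$ at $\hat s_p$ is the global one. Because $G$ is self-similar, with local minima accumulating densely in $[p\qw,1]$, the naive estimate \eqref{eq:plan-bd} only approaches $B_{\xi,\eta}$ and never reaches it; making the argument tight forces one to exploit the special arithmetic of the digit pattern of $\hat s_p$ (through $p^{\rhop-1}=\tfrac{p+1}{2}$ and the self-affinity above, which make the inequality tight precisely along that pattern and nowhere else) together with a nontrivial finite search over the competing digit prefixes, the depth of which grows with $p$. The irregular dependence of the optimal pair $(\xi,\eta)$ on $p$ exhibited in \eqref{T1-3-0} is the outward sign of where the real work lies.
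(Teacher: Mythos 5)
Your scaffolding is sound --- the upper bound $\gb_p\le B_{\xi,\eta}$ follows from \refL{L:G-hat-s} and the content is entirely in the lower bound --- and the inequality you obtain by iterating the self-similarity of $\varphi$ along the base-$p$ digits of a minimizer and inserting $\varphi(t)\ge\gb_p t^{\rhop}$ at the tail is correct. Your $p=3$ calculation at depth $k=1$ is also right: since $p^{\rhop-1}=\frac{p+1}{2}$, the maximizer of $(1+t)^{\rhop}-2t^{\rhop}$ on $[0,1]$ is $t=\frac12$ precisely when $p=3$, the $b_1=1$ bound then equals $B_{1,0}$, and $b_1=2$ gives something strictly larger, yielding a clean standalone proof of \eqref{beta3}. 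You also correctly diagnose why this is no longer tight for $p\ge5$: the shift orbit of $\hat s_p$ lands on the fixed point $\frac12$, and there the substitution $\varphi(\tfrac12)\ge\gb_p 2^{-\rhop}$ has slack that no finite truncation depth $k$ removes.

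But the step you yourself flag as the genuine obstacle is exactly where the proposal stops being a proof. The local self-affinity $\varphi(\tfrac12+\tau)-\tfrac14=\tfrac1p\bigl(\varphi(\tfrac12+p\tau)-\tfrac14\bigr)$ is indeed the right ingredient (it is \refL{L4} with the prefix digits stripped off), but you do not extract from it a concrete, finitely verifiable criterion certifying $\hat s_p$ as the minimizer of $G$ on a definite interval $I_{M+k}$; ``convexity'', ``invariants'' and ``a compact set bounded away from $\frac12$'' remain promissory notes. What is missing is the content of \refT{T4}: expanding $s\mapsto s^{-\rhop}$ about $\mu=\hat s_p$ with a strictly positive integral remainder $J_{\theta,\mu}$ reduces local minimality on $I_{M+k}$ to the single condition $\mathcal Q_\mu(t)\ge E_{\mu,k}(t)$ on $\bigl[0,\tfrac12-\tfrac1{2p}\bigr]\cup\bigl[\tfrac12+\tfrac1{2p},1\bigr]$, a condition that propagates automatically to all larger $k$ and is checkable in finitely many steps via the monotonic-majorization device of \refL{L3}; the same device handles $[p^{-1},1]\setminus I_{M+k}$. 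Without such a criterion your bound only converges to $B_{\xi,\eta}$ as $k\to\infty$ without ever reaching it, which is strictly weaker than what \refT{T1} asserts, and the cases $p\ge5$ remain unproved.
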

Indeed, the same result $\beta_p = B_{\xi,\eta}$ holds for larger 
values of $p$ with suitably chosen $(\xi,\eta)$, and our numerical 
calculations confirmed this for $p$ up to several thousand (see 
Section~\ref{S:large-p}); however, a proof for \emph{all} odd primes 
remains open. 

In particular, we obtain 
\begin{align}
\renewcommand{\arraystretch}{2}
\begin{tabular}{ccccc}
$p$ & $13$ & $17$ &\quad & $\beta_p (p=3,\dots,113)$\\ \cline{1-3}
$\beta_p$ 
& \makecell{$\frac{124}{91}\bigl(\frac{26}{37}\bigr)^{\log_{13}91}$
\\ \footnotesize{$\approx 0.73266$}}
& \makecell{$\frac{71}{51}\bigl(\frac{34}{49}\bigr)^{\log_{17}153}$ 
\\ \footnotesize{$\approx 0.72758$}}
& &\multirow{2}{7em}{\includegraphics[height=3cm]{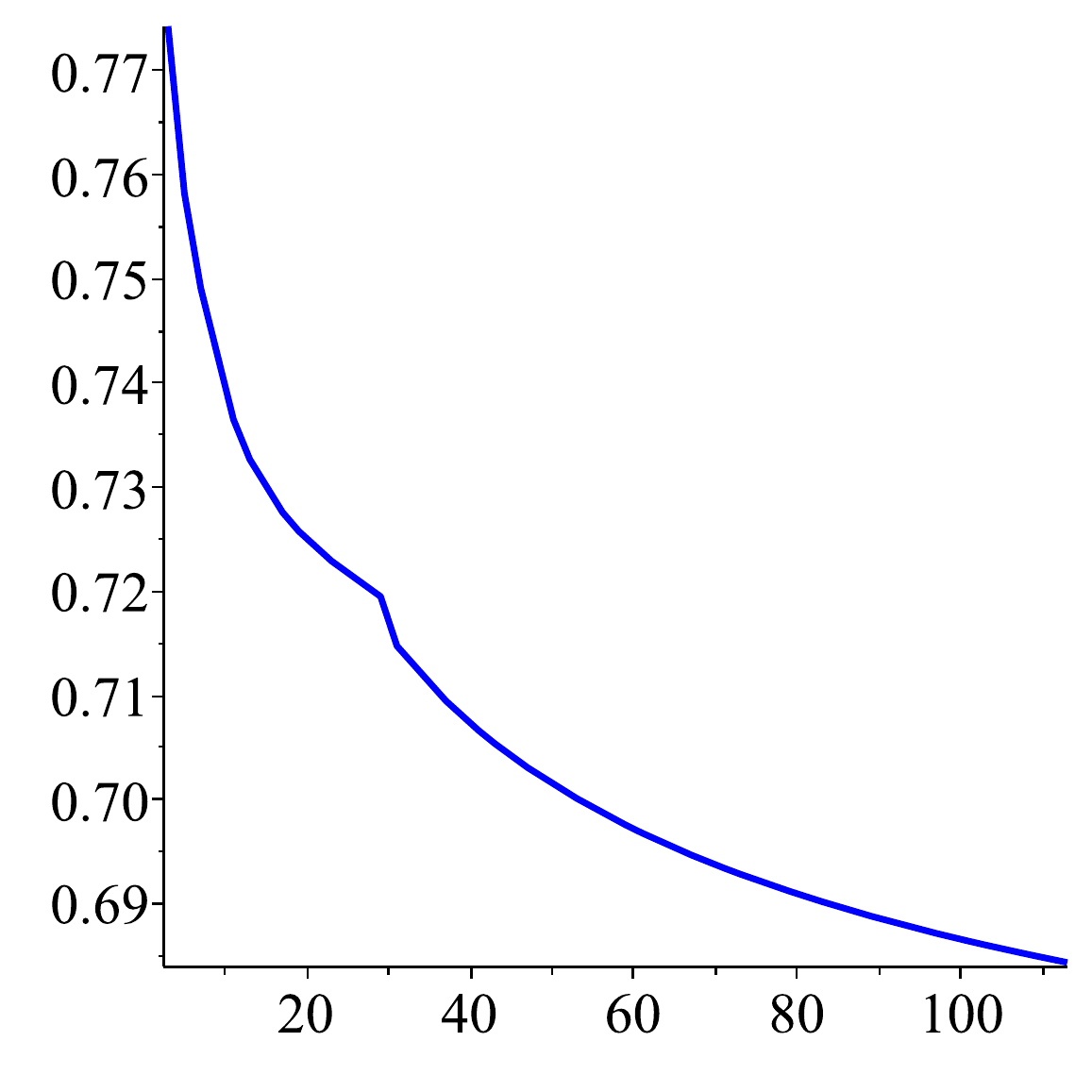}} \\
$p$ & $19$ & $113$ \\ \cline{1-3}
$\beta_p$ 
& \makecell{$\frac{533}{380}\bigl(\frac{38}{55}\bigr)^{\log_{19}190}$
\\ \footnotesize{$\approx 0.72575$}}
& \makecell{$\frac{7780}{2147}\bigl(\frac{226}{555}
\bigr)^{\log_{113}6441}$\\ \footnotesize{$\approx 0.68432$}}
\end{tabular} 
\end{align}

To prove \refT{T1}, in view of \eqref{gbp}, it suffices to find the
minimum of $\cP_p(\log_p s)$ for $s\in[p\qw,1]$ . Wilson (1998)
conjectured (in a different formulation for integers $n$, see
\refR{Rn} below) that for any $p$, the minimum $\beta_p$ occurs at
some point $\hs_p$ such that all but a finite number of its base $p$
digits are equal to $\frac{p-1}2$. Note that the point $\hat{s}_p$
of \eqref{E:hat-sp}--\eqref{E:hat-sp2} is of the type consistent with Wilson's
conjecture.

We
observe that the graph of $\cP_p(\log s)$ has a self-similar nature,
and if the graph is ``zoomed in" on such points $s$, the resulting 
function converges uniformly. We then find the local behavior from the
limiting function. The proof of \eqref{T1-3-0} then builds on this
idea, and this implies in particular Wilson's conjecture (\ref{wc3});
see \refS{Spf} for the details of the proof.

\begin{problem}\label{PR1}
The apparently simplest case $p=2$ seems to be actually the most
complicated. Despite many digits of $\beta_2$ are known (see A077464
and the references therein), to the best of our knowledge, no exact
expression for $\beta_2$ is available or proposed or conjectured.
How to characterize the minimum point $\hs_2$, and what is the 
corresponding minimum value $\gb_2$?
\end{problem}

\begin{problem}\label{PR2}
The main open question is whether $\beta_p = B_{\xi,\eta}$ 
for some $(\xi,\eta)$ for all 
odd primes $p$.
Equivalently, is the minimum point always some $\hat{s}_p$ 
(defined in \eqref{E:hat-sp})? 
\end{problem}

Our approach is based on the resolution of the recurrence
\eqref{rec2} satisfied by $F_p(n)$, which we prove below in \refT{T2},
following the same arguments used in our previous paper \cite{hk24}.
No other number-theoretic properties are needed. This then yields the
representation \eqref{Pp} with a continuous periodic functions
$\cP_p$, and a special explicit formula for $\cP_p$ that we will
use. Indeed, in the case $p=2$, this recurrence is of the binary form
studied in \cite{hk24}, and $F_2(n)$ was one of the many examples
discussed there. We show in Appendix \ref{Srec} that the method of
\cite{hk24} can be generalized to a general class of $p$-ary
recursions including \eqref{rec2}. (The results in the appendix are
valid for any integer $p\ge2$.)

More generally, a number of authors have studied $F_{p,d}(n)$, the
number of multinomial coefficients $\binom{m}{j_1,\dots,j_d}$ with
$0\le m<n$ that are not divisible by $p$. 
\begin{problem}\label{PR3}
  Extend the methods and results of the present paper to multinomial
  coefficients. 
\end{problem}

\section{A recurrence and its solutions}\label{Sbinom}

In this section, we fix a prime $p\ge2$.

\begin{theorem}\label{T2}
The total number of binomial coefficients $\binom{m}{k}$ with $m,k<n$ that
are not divisible by $p$ satisfies the recurrence
\begin{equation}\label{rec2}
	F_p(n)=\sum_{0\leq j<p}(p-j)F_p\left(\left\lfloor
    \frac{n+j}{p}\right\rfloor \right)\qquad (n\ge p),
\end{equation}
with the initial values $\{F_p(j)=\binom{j+1}{2}:j=1,\ldots,p-1\}$. 
In fact, \eqref{rec2} holds for all $n\ge0$, with $F_p(0):=0$.
\end{theorem}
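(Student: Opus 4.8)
The plan is to establish the recurrence combinatorially by partitioning the binomial coefficients $\binom{m}{k}$ with $m,k<n$ not divisible by $p$ according to the residues of $m$ and $k$ modulo $p$, and then invoking Lucas' theorem. Write $m=pm'+a$ and $k=pk'+b$ with $0\le a,b<p$. By Lucas' theorem, $\binom{m}{k}\not\equiv 0\pmod p$ if and only if $\binom{a}{b}\not\equiv 0\pmod p$ and $\binom{m'}{k'}\not\equiv 0\pmod p$; and since $0\le b\le a<p$ forces $\binom ab\not\equiv0$, the first condition is simply $b\le a$. So the count splits as a sum over residue pairs $(a,b)$ with $0\le b\le a<p$, and for each such pair we must count pairs $(m',k')$ with $k'\le m'$, $\binom{m'}{k'}\not\equiv0\pmod p$, and $pm'+a<n$, i.e.\ $m'<(n-a)/p$, i.e.\ $m'\le \lceil (n-a)/p\rceil -1 = \lfloor (n-a+p-1)/p\rfloor-1$; equivalently $m'<\lfloor(n+p-1-a)/p\rfloor$ after a standard floor manipulation. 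Setting $j=p-1-a$ this becomes $m'<\lfloor(n+j)/p\rfloor$.

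The key step is then the bookkeeping: for a fixed value of $a$ there are $a+1$ choices of $b\in\{0,\dots,a\}$, and the inner count of valid $(m',k')$ is exactly $F_p\bigl(\lfloor(n+j)/p\rfloor\bigr)$ with $j=p-1-a$. Summing over $a=0,\dots,p-1$ gives $F_p(n)=\sum_{a=0}^{p-1}(a+1)\,F_p\bigl(\lfloor(n+p-1-a)/p\rfloor\bigr)$, and re-indexing with $j=p-1-a$ yields $F_p(n)=\sum_{0\le j<p}(p-j)F_p\bigl(\lfloor(n+j)/p\rfloor\bigr)$, which is \eqref{rec2}. I would carry out the floor identity $\lfloor (n-a)/p\rfloor$-style manipulation carefully: the cleanest route is to observe that the number of integers $m'\ge0$ with $pm'+a<n$ equals $\lceil (n-a)/p\rceil$ when $n>a$ and $0$ otherwise, and that $\lceil (n-a)/p\rceil=\lfloor (n-a+p-1)/p\rfloor=\lfloor(n+j)/p\rfloor$ with $j=p-1-a$, uniformly including the edge cases.

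It remains to check the initial values and the ranges. For $1\le n\le p-1$ every pair $(m,k)$ with $0\le k\le m<n$ has $m<p$, so $\binom mk\not\equiv0\pmod p$ automatically (again by Lucas, or directly), giving $F_p(n)=\#\{(m,k):0\le k\le m<n\}=\binom{n+1}{2}$; together with $F_p(0)=0$ this pins down the base cases. Finally, to see that \eqref{rec2} in fact holds for all $n\ge0$: for $0\le n\le p-1$ one has $\lfloor(n+j)/p\rfloor\in\{0,1\}$, equal to $1$ exactly when $j\ge p-n$, so the right-hand side is $\sum_{p-n\le j<p}(p-j)F_p(1)=\sum_{i=1}^{n} i = \binom{n+1}{2}=F_p(n)$, using $F_p(1)=1$; and for $n=0$ both sides vanish. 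The main obstacle is nothing deep — it is purely the care needed in the floor/ceiling algebra to convert ``$pm'+a<n$'' into ``$m'<\lfloor(n+j)/p\rfloor$'' uniformly across all residues and including the small-$n$ boundary, so that the single formula \eqref{rec2} is valid for every $n\ge0$ rather than only for $n\ge p$.
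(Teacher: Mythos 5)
Your proof is correct, and it takes a genuinely different route from the paper's. The paper derives \eqref{rec2} by algebraic manipulation of two pre-existing closed-form formulas: Volodin's expression \eqref{vo2} for $F_p(n)$ in terms of base-$p$ digits, and Fine's formula \eqref{fine2} for the increment $F_p(n+1)-F_p(n)$. The argument there is essentially a telescoping/cancellation calculation showing $F_p(kp)=\binom{p+1}{2}F_p(k)$ and then $F_p(kp+r)-F_p(kp)=\binom{r+1}{2}\bigl(F_p(k+1)-F_p(k)\bigr)$, which when recombined gives \eqref{rec2}. You instead go directly to Lucas' theorem and set up an explicit bijection: each pair $(m,k)$ with $0\le k\le m<n$ and $p\nmid\binom{m}{k}$ corresponds to a quadruple $(m',k',a,b)$ with $0\le b\le a<p$, $p\nmid\binom{m'}{k'}$ (hence $k'\le m'$ automatically), and $pm'+a<n$; summing the $(a+1)$ choices of $b$ for each $a$ and converting $pm'+a<n$ into $m'<\lceil(n-a)/p\rceil=\lfloor(n+j)/p\rfloor$ with $j=p-1-a$ gives \eqref{rec2} at once. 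What your approach buys: it is self-contained (no reliance on Volodin's or Fine's formulas, both of which are themselves Lucas-type results), it explains combinatorially where the weights $p-j$ come from, and the single partition handles all $n\ge0$ uniformly — the separate $0\le n<p$ check you append is a nice sanity verification but not logically required, since the bijection and the ceiling identity $\lceil(n-a)/p\rceil=\lfloor(n+j)/p\rfloor\ge0$ already cover those cases (with $F_p(0)=0$). What the paper's route buys: it stays entirely within the formulaic machinery the rest of the paper uses (the digit expansion \eqref{vo2} reappears in Theorem~\ref{T3}), so it avoids introducing Lucas as a separate ingredient. Both are valid; yours is arguably the more conceptual derivation of the recurrence itself.
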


\begin{proof}
It is obvious that $\binom{j}{k}$ is not divisible by a prime $p$ if
$k\leq j<p$. Thus the initial values are
\begin{equation}
	F_p(j)=\binom{j+1}{2}\quad\textrm{for }j=1,\ldots,p-1.
\end{equation}
We use the following expression from Volodin \cite{vo1} (there stated 
more generally for multinomial coefficients):
\begin{equation}
	F_p(n)
	=\frac{1}{2}\sum_{0\leq j\leq \nu}
	\binom{p+1}{2}^{j}b_{j}
	\prod_{j\leq i\leq \nu}(b_i+1),
	\label{vo2}
\end{equation}
[see also \cite{ch1}] where $b_{j}\in\set{0,\dots,p-1}$ are the base
$p$ digits of $n=b_0+b_1p+\dotsm+b_\nu p^\nu$. (The formula
\eqref{vo2} holds trivially for $n=0$ too, with an empty sum.) If
$n=kp$, then $b_{0}=0$ and $k=b_1+b_2p+\cdots+b_{\nu}p^{\nu-1}$.
Thus, from (\ref{vo2}),
\begin{equation}
	\begin{aligned}[b]
		F_p(n) 
		&=\frac{1}{2}\sum_{1\leq j\leq \nu}\binom{p+1}{2}^{j}b_{j}
		\prod_{j\leq i\leq \nu}(b_i+1)\\
		& =\binom{p+1}{2}\cdot\frac{1}{2}
		\sum_{1\leq j\leq \nu}\binom{p+1}{2}^{j-1}b_{j}
		\prod_{j\leq i\leq \nu}(b_i+1)\\
		& =\binom{p+1}{2}\cdot\frac{1}{2}
		\sum_{0\leq j\leq \nu-1}\binom{p+1}{2}^{j}b_{j+1}
		\prod_{j\leq i\leq \nu-1}(b_{i+1}+1)\\
		& =\binom{p+1}{2}F_p(k).
	\end{aligned}
\end{equation}
In general, suppose $n=kp+r$, where $k\ge0$ and $0\leq r<p$. Then 
$b_{0}=r$ and \eqref{vo2} yields
\begin{equation}
	F_p(n)-F_p(kp)
	=\frac{r}{2}\prod_{0\leq i\leq \nu}(b_i+1)
	=\frac{r(r+1)}{2}\prod_{1\leq i\leq \nu}(b_i+1)
\end{equation}
since all but the first term in the sums cancel. By Fine's result
(\ref{fine2}), this yields
\begin{equation}
	F_p(n)-F_p(kp)
	=\binom{r+1}{2}\left(F_p(k+1)-F_p(k)\right).
\end{equation}
Thus we have
\begin{equation}
	\begin{aligned}[b]
		F_p(n) 
		&= F_p(kp)+\left(F_p(n)-F_p(kp)\right)\\
		&= \binom{p+1}{2}F_p(k)+\binom{r+1}{2}
		\left(F_p(k+1)-F_p(k)\right)\\
		&= \sum_{0\leq j<p}(p-j)F_p(k)+\binom{r+1}{2}
		\left(F_p(k+1)-F_p(k)\right)\\
		&= \sum_{0\leq j\leq p-r-1}(p-j)F_p(k)
		+\sum_{p-r\leq j<p}(p-j)F_p(k+1)\\
		&= \sum_{0\leq j<p}(p-j)F_p\left(\left\lfloor
		\frac{n+j}{p}\right\rfloor \right).
	\end{aligned}
\end{equation}
This proves the recurrence \eqref{rec2}.
\end{proof}

\refT{T2} shows that $F_p(n)$ satisfies a recurrence of the type
treated in Appendix \ref{Srec}. From \refT{TP} we thus immediately
obtain the representation \eqref{Pp}, together with the following
formula for $\cP_p(t)$. (This formula is essentially given in
\cite{ch2}.) 

\begin{theorem}\label{T3} Define
\begin{align}\label{Ap}
	A=A_p:=\binom{p+1}{2}.   
\end{align}
Then the number of binomial coefficients in the first $n$ rows that 
are not divisible by $p$ satisfies 
\begin{equation}\label{t3a}
	F_p(n)
	=n^{\varrho}\mathcal{P}
	\left(\log_pn\right)\qquad\textrm{for all }n\geq1,
\end{equation}
where $\varrho=\rhop:=\log_pA$ and $\cP(t)=\cP_p(t)$ is a
continuous $1$-periodic function given by
\begin{equation}\label{t3b}
	\mathcal{P}(t):=A^{1-\{t\}}\varphi(p^{\{t\}-1}),
\end{equation}
with the function $\varphi=\gf_p:\oi\to\bbR$ given by the
explicit formula
\begin{equation}\label{t32}
	\varphi\lrpar{\sum_{j\geq1}b_{j}p^{-j}}
	=\frac{1}{2}\sum_{j\geq1}\frac{b_{j}}{A^{j}}
	\prod_{1\le i\le j}(b_i+1),
\end{equation}
for any $b_{j}\in\{0,1,2,\ldots,p-1\}$; furthermore, $\gf$ satisfies
that for $j=0,1,\ldots,p-1$,
\begin{equation}
	\varphi(t)
	=\frac{j+1}{A}\varphi(\{pt\})
	+\frac{\binom{j+1}{2}}{A},
	\qquad\mathrm{if}
	\quad\frac{j}{p}\leq t\leq\,\frac{j+1}{p}
	\label{t31}
.\end{equation}
\end{theorem}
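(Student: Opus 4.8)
The plan is to take the explicit formulas \eqref{t32} and \eqref{t3b} as the \emph{definitions} of $\gf$ and $\cP$, verify directly that $\gf$ and $\cP$ have the regularity claimed, and invoke \refT{TP} only for the identity \eqref{t3a} linking $\cP$ back to $F_p$. First I would analyse the series \eqref{t32}. Since $\prod_{1\le i\le j}(b_i+1)\le p^{j}$ and $A=\binom{p+1}{2}>p$, its $j$-th term is $O((p/A)^{j})$ with $p/A=2/(p+1)<1$; hence the series converges absolutely, $\gf$ is bounded, and $\gf$ is continuous because the truncation after $j$ digits is constant on each base-$p$ interval of length $p^{-j}$ while the corresponding tails are uniformly $O((p/A)^{j})$. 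In particular the value of $\gf$ does not depend on which of the two base-$p$ expansions of a $p$-adic rational one uses; equivalently, one checks directly that $\gf(0)=0$ and $\gf(1)=1$, which is exactly the compatibility condition at the shared endpoints $t=j/p$. Next I would verify the functional equation \eqref{t31} by peeling off the leading digit: if $j/p\le t\le (j+1)/p$ then $b_1=j$ and $\{pt\}=\sum_{i\ge1}b_{i+1}p^{-i}$, and separating the $i=1$ summand of \eqref{t32} gives $\gf(t)=\frac{\binom{b_1+1}{2}}{A}+\frac{b_1+1}{A}\gf(\{pt\})$, which is \eqref{t31}.

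Then I would define $\cP$ by \eqref{t3b}; it is $1$-periodic by construction and continuous because $\gf$ is. It remains to prove \eqref{t3a}. By \refT{T2}, $F_p$ satisfies the recurrence \eqref{rec2}, which has nonnegative coefficients $p-j$ with $\sum_{0\le j<p}(p-j)=A$ and floor-shifts $\lfloor(n+j)/p\rfloor$, so it is of the $p$-ary type covered by \refT{TP}; that theorem yields \eqref{t3a} for \emph{some} continuous $1$-periodic function, of exactly the self-similar shape \eqref{t3b}, whose factor on $\oi$ is the unique bounded solution of the attached functional equation — which, after the necessary change of indexing, is \eqref{t31}, and hence (by the previous paragraph) is $\gf$. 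Alternatively one can verify \eqref{t3a} by hand from Volodin's formula \eqref{vo2}: for $p^{b}\le n<p^{b+1}$ one has $\{\log_p n\}=\log_p(n/p^{b})$ and $p^{\{\log_p n\}-1}=n/p^{b+1}$, so a short computation reduces \eqref{t3a} to $\gf(n/p^{k})=F_p(n)/A^{k}$ for $1\le n\le p^{k}$; substituting the base-$p$ digits $n=\sum_{i=0}^{k-1}c_ip^{i}$ into \eqref{vo2} and reversing the index ($c_i=b_{k-i}$, low-order-first versus high-order-first digits) turns the right-hand side of \eqref{vo2} into $A^{k}$ times the right-hand side of \eqref{t32}; since the fractional parts $\{\log_p n\}$, $n\ge1$, are dense in $\oi$ and both sides of \eqref{t3a} are continuous, this finishes the proof. (This is the route of \cite{ch2}.)

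I expect the main obstacle to be not any single computation but the bookkeeping in the second paragraph: making sure that the general representation delivered by \refT{TP} is brought into precisely the normalization \eqref{t3b} (which argument of $\gf$, which power of $A$ in front) and that the self-similar relation it carries is matched to \eqref{t31} with the correct constants — in particular the reindexing between the two digit conventions (the recurrence \eqref{rec2} peels the \emph{low}-order digit of $n$, whereas \eqref{t31} peels the \emph{high}-order digit) and the off-by-one shift $\binom{r+1}{2}=\binom{r+2}{2}-(r+1)$, which is what makes the coefficient in \eqref{t31} equal to $(j+1)/A$ rather than $(p-j)/A$. A minor but genuine secondary point is the continuity and expansion-independence of the explicit series \eqref{t32}; this is routine once one observes $p/A=2/(p+1)<1$, but it has to be stated explicitly because the individual summands of \eqref{t32} are discontinuous functions of $t$.
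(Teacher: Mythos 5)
Your proposal is essentially correct, but it runs along a genuinely different track from the paper. The paper's proof is essentially a two-line specialization: it observes that \eqref{rec2} is \eqref{rec1} with $\gamma_i=p-i$, checks condition \eqref{d1} (which is \eqref{t33}), and then reads off \eqref{t3a}--\eqref{t3b} from \refT{TP}, \eqref{t31} from \eqref{l1}, and \eqref{t32} from \eqref{E2}. In particular, all the hard analytic work (existence, continuity, strict monotonicity, and uniqueness of $\gf$) is already packaged in \refL{L1} via the iteration $\gf_k\to\gf$, so the paper never needs to analyze the infinite series \eqref{t32} on its own. You instead take \eqref{t32} as the definition and rebuild those properties from scratch; that is a legitimate and more self-contained route, at the cost of redoing work that the appendix has already done in greater generality.

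Two points deserve care. First, in your ``\refT{TP} route'' you appeal to ``the unique bounded solution'' of the functional equation; but \refL{L1} states uniqueness among \emph{strictly increasing} continuous functions, which your explicit $\gf$ has not been shown to be. You can repair this either by proving a bounded-solution uniqueness directly (a standard contraction argument using $\max_j\gamma_j/A<1$ on the difference of two solutions), or, more simply, by doing what the paper does: substitute $\gamma_i=p-i$ into \eqref{E2} and observe that $\sum_{p-b_j\le i<p}(p-i)=\binom{b_j+1}{2}$ and $\gamma_{p-1-b_i}=b_i+1$, which turns \eqref{E2} into \eqref{t32} term by term, so the two $\gf$'s are equal by inspection with no uniqueness argument at all. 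Second, the closing sentence of your ``Volodin route'' about the density of $\frax{\log_p n}$ in $\oi$ is a red herring: the reduction you set up, namely $\gf(n/p^k)=F_p(n)/A^k$ for $1\le n\le p^k$, already implies \eqref{t3a} for every integer $n\ge1$ once you pick $k=\floor{\log_p n}+1$; no limiting or density step is needed. (Density would only be relevant if you were trying to \emph{deduce} the formula for $\cP$ from knowing its values at $\frax{\log_p n}$, but here $\cP$ is given by \eqref{t3b} a priori.) Your observation that the well-definedness of the series \eqref{t32} under the two base-$p$ expansions of a $p$-adic rational needs to be checked is a genuine point that the paper sidesteps by constructing $\gf$ as a uniform limit in \refL{L1}; your ``equivalently, check $\gf(0)=0$, $\gf(1)=1$'' is a reasonable shortcut once \eqref{t31} is in hand, but as stated it is circular (you need well-definedness to even evaluate the series on both expansions before \eqref{t31} can be invoked), so it is cleaner to verify the two-expansion identity directly by summing the geometric tail, using $\sum_{j\ge1}(p/A)^j=2/(p-1)$.
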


\begin{proof}
By \refT{T2}, $F_p(n)$ satisfies the recurrence \eqref{rec1} with
$\gamma_i=p-i$. We have \begin{equation}\label{t33} \sum_{p-j\leq
i<p}\gamma_i =\binom{j+1}{2}=F_p(j)\quad\textrm{for
}j=1,\ldots,p-1, \end{equation} and thus the condition (\ref{d1}) is
satisfied; see also \refR{Rinitial}. Similarly, $A$ in \eqref{eq:A}
is given by \eqref{Ap}. The results follow by \refT{TP} and
\refL{L1}; \eqref{t31} follows by plugging $\gamma_{j}=p-j$ into
(\ref{l1}), and (\ref{E2}) yields
\begin{equation}
	\begin{aligned}[b]
	\varphi\left(\sum_{j\geq1}b_{j}p^{-j}\right) 
	&=\sum_{j\geq1}\frac{\sum_{i=p-b_{j}}^{p}(p-i)
	}{A^{j}}\prod_{1\le i<j}(b_i+1)\\
	& =\frac12\sum_{j\geq1}\frac{b_{j}(b_{j}+1)
	}{A^{j}}\prod_{1\le i <j}(b_i+1)\\
	& =\frac{1}{2}\sum_{j\geq1}\frac{b_{j}}{A^{j}}
	\prod_{1\le i\le j}(b_i+1),
	\end{aligned}
\end{equation}
for any $b_{j}\in\{0,1,2,\ldots,p-1\}$, which is \eqref{t32}.
\end{proof}

\section{Our approach}\label{Sour}

In this section, $p$ is a fixed odd prime. Recall that 
$A=\binom{p+1}2$ and $\varrho_p=\log_pA$.

By \eqref{gbp} and \refT{T3}, $\gb_p$ is the minimum of the periodic 
function
\begin{equation}\label{maj1}
	\mathcal{P}(t)
	:=A^{1-{t}}\varphi(p^{{t}-1})
	\qquad\textrm{for}\quad t\in[0,1),
\end{equation}
where $\gf$ is given by \eqref{t32}. (By continuity, we may as well 
take the minimum for $t\in[0,1]$.) We make the change of variables 
$s=p^{t-1}$ and consider
\begin{equation}\label{maj2}
	G(s)
	:=A^{-\log_ps}\varphi(s)
	\qquad\textrm{for}\quad s\in[p^{-1},1];
\end{equation}
thus $\mathcal{P}(t)=G(p^{t-1})$ for $t\in[0,1)$, and thus
$\gb_p = \min\set{G(s):s\in[p\qw,1]}$. Note that
\begin{align}\label{maj3}
    A^{-\log_p s} 
	= p^{-\rhop \log_p s}=s^{-\rhop}.
\end{align}

\begin{remark}\label{Rn}
For any $n\ge1$, by \eqref{t3a}--\eqref{t3b} and \eqref{maj1}--\eqref{maj2}, we see that 
\begin{align}
	F_p(n)n^{-\rhop}&
	=\cP(\log_p n)
	=\cP(\frax{\log_p n}) 
	=G\bigpar{p^{\frax{\log_p n}-1}}
	=G\bigpar{np^{-\floor{\log_pn}-1}}.
\end{align}
It follows that if $G$ attains its minimum on $[p\qw,1]$ at $\hs$,
then the sequence $n_k:=\floor{p^k \hs}$ satisfies
\begin{align}
	F_p(n_k)n_k^{-\rhop}
	\to G(\hs)
	=\gb_p,
\end{align}
and thus the infimum $\gb_p$ is asymptotically reached by the
sequence $(n_k)$. Conversely, Wilson \cite{wi1} conjectured (in a
somewhat stronger form) that
\begin{align}
\gb_p=\lim_{k\to\infty}
(F_p(n_k)n_k^{-\rhop}),  
\end{align}
for a sequence $n_k$ given by the recursion
$n_{k+1}=pn_k+ \frac{p-1}2$ for a suitably chosen $n_1$; this
sequence is of the form just mentioned (up to a shift of indices),
and Wilson's conjecture thus would imply that the minimum on
$[p\qw,1]$ is attained at a point $\hs$ such that all but a finite
number of the digits in base $p$ of $\hs$ are $\frac{p-1}2$. Note
that all points $\hs$ that we consider as potential minimum points
are of this type, see \eqref{E:hat-sp2}.
\end{remark}

Our methods of proof consists of two major techniques:
\emph{magnifying mapping} and \emph{piecewise monotonic
majorization}. The former defines first a mapping $\theta$ and
magnifies the local difference of $G(\theta(s))-G(\theta(\frac12))$
in a small neighborhood, say $J$, of $\hat s=\theta(\frac12)$ into
the global difference $\varphi(s)-\varphi(\frac12)$, justifying that
$G(\hat s)$ is a local minimum in $J$. The latter bounds crudely the
ratio between two monotonic functions by their extreme values in the
targeted interval, which, after partitioning the interval
$[0,1]\setminus J$ into proper subintervals, is used
interval-by-interval to check that $G(\hat s)$ is also a minimum in
$[0,1]\setminus J$.

\subsection{A magnifying mapping}

As the minimum $\hat s$ of $G(s)$ we are going to prove all have the
form \eqref{E:hat-sp} whose $p$-ary expansion has an infinity number
of trailing digits of the form $\frac{p-1}2$, we construct a linear
mapping as follows.

Fix $M$ and $m$ with $1\leq m<p^{M}$. Let $\mu$ be the middle 
point of $\bigl[\frac{m}{p^{M}},\frac{m+1}{p^{M}}\bigr]$: 
\begin{align}\label{E:mu-mid}
	\mu := \frac{m+\frac12}{p^{M}}.
\end{align}
For every $k\ge0$, define a linear mapping $\theta_k$ from $[0,1]$ 
onto the interval 
\begin{equation}\label{E:Ik}
	I_{M+k}
	:=\left[\mu-\frac{1}{2p^{M+k}}, 
	\mu+\frac{1}{2p^{M+k}}\right]
\end{equation}
by
\begin{equation}
	\theta_{k}(t)
	:=\frac{m}{p^{M}}
	+\sum_{M<j\leq M+k}\frac{p-1}{2\,p^{j}}
	+\frac t{p^{M+k}},\qquad t\in\oi.
	\label{t40}
\end{equation}
Thus $\mu=\theta_k(\frac12)$. In terms of the $p$-ary expansion, if 
\begin{align}\label{t45}
    m=a_1p^{M-1}+\cdots+a_{M-1}p+a_{M},
\end{align}
then $\mu$ has the form $\mu = (0.\hb_1\hb_2\cdots)_p$, where
\begin{equation}\label{t46}
	\hb_i
	=a_i\;\textrm{for}\;1\leq i\leq M
	\quad\textrm{and}\quad
	\hb_i
	=\frac{p-1}{2}\;\textrm{for}\; i\ge M+1.
\end{equation}
We prove that the ``zoomed" functions $G(\theta_k(t))-G(\mu)$, 
suitably scaled, converge uniformly on $\oi$, and we give a sufficient
condition for $G(\mu)$ to be a minimum in an explicitly specified 
interval.

Note that $\varphi(\mu)$ has the form,
by \eqref{t32} and \eqref{t46},
\begin{align}
	\varphi(\mu)
	&=\frac12\sum_{1\le j\le M}
	\frac{a_j\prod_{i=1}^j(1+a_i)}{A^j}
	+\frac{\tau_M^{}}{4}
	= \frac12\sum_{1\le j\le M}
	\frac{a_j\prod_{i=1}^j(1+a_i)}{A^j}
	+\tau_M^{}\varphi\lrpar{\frac12},
\end{align}
where
\begin{align}\label{tau}
	\tau_M^{} :=\frac{\prod_{i=1}^M(1+a_i)}{A^M}.
\end{align}

The construction of the mapping $\theta_k$ is helpful in bringing the
local difference $\varphi(\theta_{k}(t))
-\varphi(\theta_{k}(\frac12))$ into a global one in terms of
$\varphi(t)-\varphi(\frac12)$.
\begin{lemma}\label{L4}
We have 
\begin{equation}\label{l4}
	p^{k}\bigpar{\varphi(\theta_{k}(t))-\varphi(\mu)}
	=\tau_M^{}\left(\varphi(t)
	-\varphi\left(\frac{1}{2}\right)\right).
\end{equation}  
\end{lemma}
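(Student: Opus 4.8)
The plan is to compute $\gf(\gth_k(t))$ directly from the explicit series formula \eqref{t32}, using that the $p$-ary expansion of $\gth_k(t)$ is known. First I would write, for $t=(0.c_1c_2\dots)_p\in\oi$, the expansion of the image point: by \eqref{t40} and \eqref{t45}--\eqref{t46}, $\gth_k(t)$ has $p$-ary digits $a_1,\dots,a_M$ in the first $M$ places, then $\frac{p-1}2$ repeated in places $M+1,\dots,M+k$, and then the digits $c_1,c_2,\dots$ of $t$ shifted to places $M+k+1,M+k+2,\dots$. (One should note the harmless ambiguity when $t$ has a terminating or all-$(p-1)$ expansion; since $\gf$ is continuous and the formula \eqref{t32} is consistent with the identity $0.b_1\dots b_r(p-1)(p-1)\dots=0.b_1\dots(b_r{+}1)00\dots$, this causes no problem, but I would remark on it.)

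Next I would plug this expansion into \eqref{t32}. The sum over $j$ splits into three ranges. For $1\le j\le M$ the digit is $a_j$ and the partial product $\prod_{1\le i\le j}(b_i+1)=\prod_{1\le i\le j}(1+a_i)$, giving exactly the finite sum $\sum_{1\le j\le M}\frac{a_j\prod_{i=1}^j(1+a_i)}{A^j}$ that already appears in the displayed formula for $\gf(\mu)$ in the excerpt. For $M<j\le M+k$ the digit is $\frac{p-1}2$ and the partial product is $\tau_M^{}A^{M}\cdot\bigpar{\frac{p+1}2}^{j-M}=\tau_M^{}A^{M}\cdot\bigpar{\frac{p+1}2}^{j-M}$; since $\frac{p-1}2\cdot\frac{p+1}2=\frac12\binom{p+1}2\cdot\frac{??}{}$ — more precisely $\frac{p-1}2\prod(b_i+1)=\frac{p-1}2\cdot(\tfrac{p+1}2)^{j-M}\prod_{i\le M}(1+a_i)$ and $A^{-j}$ times this telescopes — this middle block contributes a geometric-type partial sum that is precisely what one gets from $\gf(\tfrac12)=\tfrac12\sum_{j\ge1}\frac{(p-1)/2}{A^j}\prod(\,\cdot\,)$ truncated at $k$ terms and rescaled by $\tau_M^{}$. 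For $j>M+k$ the digit is $c_{j-M-k}$ and the partial product factors as $\tau_M^{}A^{M}\cdot(\tfrac{p+1}2)^{k}\cdot\prod_{1\le i\le j-M-k}(1+c_i)$; pulling out the common factor $\tau_M^{}A^{M}(\tfrac{p+1}2)^{k}A^{-(M+k)}=\tau_M^{}A^{-k}(\tfrac{p+1}2)^{k}=\tau_M^{}\cdot p^{-k}$ (using $A=\binom{p+1}2$, so $(\tfrac{p+1}2)^k/A^k=p^{-k}$ — wait, $\binom{p+1}2=\frac{(p+1)p}2$, hence $(\tfrac{p+1}2)^k/A^k=(\tfrac{p+1}2)^k/(\tfrac{(p+1)p}2)^k=p^{-k}$), the tail sum becomes exactly $\tau_M^{}p^{-k}\cdot\tfrac12\sum_{r\ge1}\frac{c_r\prod_{1\le i\le r}(1+c_i)}{A^r}=\tau_M^{}p^{-k}\gf(t)$.

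Then I would assemble: $\gf(\gth_k(t))=\bigpar{\text{finite }M\text{-part}}+\tau_M^{}\cdot\bigpar{\text{partial }\gf(\tfrac12)\text{-sum over the }k\text{ middle digits}}+\tau_M^{}p^{-k}\gf(t)$. Subtracting the same identity at $t=\tfrac12$ — for which the "$t$-tail" digits $c_r$ are all $\frac{p-1}2$, so the middle block and the tail together reconstitute exactly $\tau_M^{}\gf(\tfrac12)$ — the finite $M$-part and the middle partial sum cancel, leaving $\gf(\gth_k(t))-\gf(\mu)=\tau_M^{}p^{-k}\bigpar{\gf(t)-\gf(\tfrac12)}$. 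Multiplying by $p^k$ gives \eqref{l4}.

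Alternatively, and perhaps more cleanly, I would prove it by induction on $k$ using the self-similarity relation \eqref{t31}: the single-step relation $\gf(t)=\frac{j+1}{A}\gf(\{pt\})+\frac{\binom{j+1}2}{A}$ on each $p$-adic subinterval, applied $k$ times with the digit $j=\frac{p-1}2$, directly yields $\gf(\gth_k(t))=\bigpar{\tfrac{(p+1)/2}{A}}^{k}\gf(\gth_0(t))+(\text{const independent of }t)=p^{-k}\gf(\gth_0(t))+(\text{const})$, and the $k=0$ case $\gf(\gth_0(t))-\gf(\gth_0(\tfrac12))=\tau_M^{}(\gf(t)-\gf(\tfrac12))$ is again \eqref{t31} iterated $M$ times down the digits $a_1,\dots,a_M$. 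The main obstacle is purely bookkeeping: keeping the nested products, the index shifts, and the $A$-versus-$\binom{p+1}2$ powers straight, plus the one genuine subtlety of non-uniqueness of $p$-ary expansions at the endpoints, which I would dispatch by invoking continuity of $\gf$ (from \refT{T3}) so that it suffices to verify \eqref{l4} on a dense set of $t$, e.g. those with a terminating expansion.
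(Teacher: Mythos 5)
Your main argument — computing $\varphi(\theta_k(t))$ and $\varphi(\mu)$ from the digit-expansion formula \eqref{t32}, observing that the first $M+k$ terms coincide, and factoring the common prefix product out of the tail — is precisely the paper's proof, with only a cosmetic difference in how the sums are organized before subtracting (the paper writes the difference directly as a single series over $j\ge M+k+1$ rather than splitting each sum into three ranges first). One small caution about your alternative route via \eqref{t31}: the leading digit of $\theta_k(t)$ is $a_1$, not $\frac{p-1}{2}$, so the claimed identity $\varphi(\theta_k(t))=p^{-k}\varphi(\theta_0(t))+\text{const}$ is not obtained by a direct $k$-fold application of \eqref{t31} to $\theta_k(t)$; it does hold, but only after first stripping the $M$ digits $a_1,\dots,a_M$ (accumulating the factor $\tau_M^{}$), then the $k$ digits $\frac{p-1}{2}$ (accumulating $p^{-k}$), and then comparing against the analogous $M$-step reduction of $\theta_0(t)$ — i.e., the two-stage reduction you describe for the $k=0$ case must precede, not follow, the $\frac{p-1}{2}$-stripping.
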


\begin{proof}
Let $b_i$ and $\tilde{b}_i$ be the base $p$ digits of $t$ and
$\theta_{k}(t)$, respectively. Then \eqref{t40} shows that the first
$M+k$ digits $\tb_i$ coincide with those of $\mu$ given by 
\eqref{t46}:
\begin{equation}
	\tilde{b}_i
	=\hb_i=a_i\;\textrm{for}\;1\leq i\leq M
	\quad\textrm{and}\quad
	\tilde{b}_i=\hb_i=\frac{p-1}{2}\;
	\textrm{for}\;M+1\leq i\leq M+k,
\end{equation}
and also that the remaining digits of $\tb_i$ are the digits of $t$,
i.e.,
\begin{equation}\label{l4b}
	\tilde{b}_{i+k+M}=b_i,
\qquad i\ge1.
\end{equation}
Hence, if we compute $\gf(\theta_k(t))$ and $\gf(\mu)$ by
\eqref{t32}, then the first $M+k$ terms are equal, and we obtain
\begin{align}\label{l4c}
	\varphi(\theta_{k}(t))-\varphi(\mu) 
	&=\frac{1}{2}\sum_{j\geq k+M+1}\frac{\tilde{b}_{j}
	\prod_{i=1}^{j}\left(\tilde{b}_i+1\right)}
	{A^{j}}\\
	&\qquad  
	-\frac{1}{2}\sum_{j\geq k+M+1}\frac{\frac{p-1}{2}
	\prod_{i=1}^{M}\left({a}_i+1\right)
	\prod_{i=M+1}^{j}\left(\frac{p-1}{2}+1\right)}{A^{j}}
	\notag\\
	&=\frac{\prod_{i=1}^{M}\left({a}_i+1\right)}
	{A^{k+M}}\left(\frac{p+1}{2}\right)^{k}
	\frac{1}{2}\left(\sum_{l\geq1}
	\frac{b_{l}\prod_{i=1}^{l}\left(b_i+1\right)}
	{A^{l}}-\sum_{l\geq1}\frac{\frac{p-1}{2}
	\left(\frac{p+1}{2}\right)^{l}}{A^{l}}\right)
	\notag\\
	&=\frac{\tau_M^{}}{p^{k}}\left(\varphi(t)
	-\varphi\left(\frac{1}{2}\right)\right).\qedhere
\end{align}
\end{proof}

The crucial properties we need of the magnifying mapping $\theta_k$ 
are given as follows, the first for large $k$ and the second for 
finite one. 
\begin{theorem}\label{T4}
With the notations as above, we have, uniformly for $t\in\oi$,
\begin{equation}
	p^k\left(G(\theta_{k}(t))-G(\mu)\right)
	= \mu^{-\rhop-1}
	\lrpar{\mathcal{Q}_{\mu}(t)+O\left(p^{-k}\right)},
	\label{t41}
\end{equation}
for large $k$, where the limiting function $\mathcal{Q}_{\mu}(t)$ is 
given by 
\begin{equation}
	\mathcal{Q}_{\mu}(t)
	:=\tau_M^{}\mu
	\left(\varphi(t)
	-\varphi\left(\frac{1}{2}\right)\right)
	 - \frac{\rhop \varphi(\mu)}{p^M}
	\left(t-\frac{1}{2}\right).
	\label{t42}
\end{equation}
Furthermore, if for some $k\ge0$,
\begin{equation}
	\mathcal{Q}_{\mu}(t)
	\ge E_{\mu,k}(t)
	\qquad\text{for all}
	\quad t\in\left[0,\frac{1}{2}-\frac{1}{2p}\right]
	\cup\left[\frac{1}{2}+\frac{1}{2p},1\right],
	\label{t43}
\end{equation}
where
\begin{equation}
	E_{\mu,k}(t)
	:=\frac{\tau_M^{}\rhop}{p^{M+k}}
	\left(\varphi(t)
	-\varphi\left(\frac{1}{2}\right)\right)
	\left(t-\frac{1}{2}\right),	
	\label{t44}
\end{equation}
then $G(\mu)$ is the minimum of the function $G$ in the interval
$I_{M+k}$ (defined in \eqref{E:Ik}), and this minimum is attained 
only at $\mu$.
\end{theorem}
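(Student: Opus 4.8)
The plan is to prove the two assertions of \refT{T4} essentially by rewriting the scaled difference $p^k(G(\theta_k(t))-G(\mu))$ in a form where \refL{L4} does all the work on the $\varphi$-factor, and Taylor expansion handles the power-of-$s$ prefactor $s^{-\rhop}$. First I would write $G(s)=s^{-\rhop}\varphi(s)$ (using \eqref{maj3}) and compute
\begin{align*}
G(\theta_k(t))-G(\mu)
&=\theta_k(t)^{-\rhop}\varphi(\theta_k(t))-\mu^{-\rhop}\varphi(\mu)\\
&=\theta_k(t)^{-\rhop}\bigl(\varphi(\theta_k(t))-\varphi(\mu)\bigr)
 +\bigl(\theta_k(t)^{-\rhop}-\mu^{-\rhop}\bigr)\varphi(\mu).
\end{align*}
By \eqref{t40} we have $\theta_k(t)-\mu=(t-\tfrac12)p^{-(M+k)}$, so $\theta_k(t)=\mu+O(p^{-(M+k)})$ uniformly in $t\in\oi$; hence $\theta_k(t)^{-\rhop}=\mu^{-\rhop}+O(p^{-k})$ and, by the mean value theorem applied to $x\mapsto x^{-\rhop}$ on a neighborhood of $\mu$,
\[
\theta_k(t)^{-\rhop}-\mu^{-\rhop}
=-\rhop\,\mu^{-\rhop-1}\,\frac{t-\tfrac12}{p^{M+k}}+O\!\bigl(p^{-2k}\bigr),
\]
again uniformly in $t$. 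Multiplying the displayed identity by $p^k$ and inserting \refL{L4} for the first term (which gives $p^k(\varphi(\theta_k(t))-\varphi(\mu))=\tau_M(\varphi(t)-\varphi(\tfrac12))$) yields
\[
p^k\bigl(G(\theta_k(t))-G(\mu)\bigr)
=\mu^{-\rhop}\tau_M\bigl(\varphi(t)-\varphi(\tfrac12)\bigr)
-\rhop\mu^{-\rhop-1}\frac{\varphi(\mu)}{p^M}\bigl(t-\tfrac12\bigr)+O(p^{-k}).
\]
Factoring out $\mu^{-\rhop-1}$ from the two main terms is exactly \eqref{t42}, proving \eqref{t41}. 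Here one should note $\varphi$ is bounded on $\oi$ (it is continuous, being given by the uniformly convergent series \eqref{t32}), so all the $O$-terms are genuinely uniform; the boundedness of $\theta_k(t)$ away from $0$ for large $k$ (since $\mu>m/p^M>0$) is what keeps $x\mapsto x^{-\rhop-1}$ smooth on the relevant range.

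For the second assertion I would \emph{not} pass to the limit but keep the exact identity. Redo the above computation without discarding the remainder: write $\theta_k(t)^{-\rhop}=\mu^{-\rhop}(1+(t-\tfrac12)\mu^{-1}p^{-(M+k)})^{-\rhop}$ and, crucially, use the exact inequality that for $x\ge0$ one has $(1+x)^{-\rhop}\ge 1-\rhop x$ (convexity of $(1+x)^{-\rhop}$, valid since $\rhop>0$), and more precisely control the sign. Actually the cleanest route is to write
\[
p^k\bigl(G(\theta_k(t))-G(\mu)\bigr)
=\theta_k(t)^{-\rhop}\Bigl(\tau_M(\varphi(t)-\varphi(\tfrac12))\Bigr)
+p^k\bigl(\theta_k(t)^{-\rhop}-\mu^{-\rhop}\bigr)\varphi(\mu),
\]
so that $G(\mu)$ being the minimum on $I_{M+k}$ — equivalently $G(\theta_k(t))\ge G(\mu)$ for all $t\in\oi$, with equality only at $t=\tfrac12$ — amounts to showing the right-hand side is $\ge0$, and $>0$ for $t\ne\tfrac12$. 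On the central interval $t\in[\tfrac12-\tfrac1{2p},\tfrac12+\tfrac1{2p}]$ (i.e. $\theta_k(t)\in I_{M+k+1}$), one argues by the self-similarity/induction built into \eqref{t31}: since $\varphi$ on that range is an affine image of $\varphi$ one level down, $\varphi(t)-\varphi(\tfrac12)$ is itself $\tau/A$ times a lower-level difference, and the inequality propagates; this is where Wilson-type structure (all-digits-$\tfrac{p-1}2$ tails) makes the central piece automatically fine once the flanking pieces are handled. On the complement $[0,\tfrac12-\tfrac1{2p}]\cup[\tfrac12+\tfrac1{2p},1]$ one substitutes the exact bound for $\theta_k(t)^{-\rhop}-\mu^{-\rhop}$ in terms of $\varphi(t)-\varphi(\tfrac12)$ and $t-\tfrac12$; the algebra should collapse precisely to: the right-hand side $\ge0$ iff $\mathcal Q_\mu(t)\ge E_{\mu,k}(t)$, where $E_{\mu,k}$ in \eqref{t44} is exactly the error term one gets from the second-order remainder in $(1+x)^{-\rhop}$. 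So hypothesis \eqref{t43} is designed to be \emph{equivalent} to the desired inequality on the flanks.

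The main obstacle I anticipate is getting the second-order term exactly right — i.e. showing that the exact (not asymptotic) remainder when expanding $\theta_k(t)^{-\rhop}-\mu^{-\rhop}$ is bounded, in the right direction, by precisely $E_{\mu,k}(t)$ rather than merely by something of that order. The naive Taylor bound gives an $O(p^{-2k})$ term with an uncontrolled constant; to land exactly on $\tfrac{\tau_M\rhop}{p^{M+k}}(\varphi(t)-\varphi(\tfrac12))(t-\tfrac12)$ one needs to combine the convexity inequality $(1+x)^{-\rhop}\ge 1-\rhop x+\binom{\rhop+1}{2}x^2$-type estimates with the sign of $t-\tfrac12$ on each half, and also use that $\varphi(t)-\varphi(\tfrac12)$ has a definite sign relationship with $t-\tfrac12$ on the flanks (which is itself something that presumably will be established separately for the specific $(\xi,\eta)$). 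A secondary, more routine obstacle is bookkeeping: keeping track of which interval $\theta_k$ maps where, and checking equality is attained \emph{only} at $\mu$, which requires the flank inequality \eqref{t43} to be strict off the endpoints and a separate argument (self-similar descent) that the center contributes strict inequality except at the very center. I would handle the center by iterating: $I_{M+k+1}\subset I_{M+k}$, and on $I_{M+k+1}$ the same theorem applies with $M$ replaced by $M+1$ and $m$ by $pm+\tfrac{p-1}2$, so strictness propagates inward and, in the limit, pins the unique minimizer at $\mu$.
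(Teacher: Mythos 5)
Your argument for \eqref{t41} is essentially the paper's: split $G(\theta_k(t))-G(\mu)$ into a $\varphi$-difference part and a power-function part, apply \refL{L4} to the former, and Taylor-expand the latter, with all remainders uniform because $\mu>m/p^M>0$ and $\varphi$ is bounded. That part is fine.

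The second part has two genuine problems. First, your identification of $E_{\mu,k}$ with ``the error term one gets from the second-order remainder in $(1+x)^{-\rhop}$'' is wrong, and the claim that the minimality condition on the flanks is \emph{equivalent} to $\mathcal Q_\mu\ge E_{\mu,k}$ is also wrong. In the exact decomposition one obtains
\begin{equation*}
  \mu^{\rhop+1}p^{k}\bigl(G(\theta_k(t))-G(\mu)\bigr)=\mathcal Q_\mu(t)-E_{\mu,k}(t)+R_{\mu,k}(t),
\end{equation*}
where $E_{\mu,k}$ arises from the \emph{cross-term} $-\rhop\,\nabla\varphi(\theta)\,\nabla\theta$ (first-order term of the power function times the $\varphi$-difference), while the second-order Taylor remainder of $s\mapsto s^{-\rhop}$ contributes a separate term $R_{\mu,k}(t)=p^{k}J_{\theta_k(t),\mu}\varphi(\theta_k(t))(\nabla\theta)^2$, which is strictly positive for $t\ne\tfrac12$ (the integral form of the remainder, as in the paper's \eqref{E:J}). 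So \eqref{t43} is a one-sided \emph{sufficient} condition; the strict positivity you need for uniqueness of the minimizer is delivered for free by $R_{\mu,k}>0$, not by strictness of \eqref{t43}. Your convexity inequality $(1+x)^{-\rhop}\ge 1-\rhop x$ is too coarse: it drops $R_{\mu,k}$ and loses the strictness, and it does not by itself produce the exact $E_{\mu,k}$ from \eqref{t44}.

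Second, your treatment of the central interval $t\in(\tfrac12-\tfrac1{2p},\tfrac12+\tfrac1{2p})$ is only sketched, and the sketch is not adequate as written. You invoke ``self-similarity/induction built into \eqref{t31}'' and assert the hypothesis propagates when $M\to M+1$, but you do not verify this, and there is a risk of circularity in ``applying the theorem to itself''. The paper avoids the center entirely by a small trick: for any $x\in I_{M+k}$ with $x\ne\mu$, take $k_x\ge k$ to be the \emph{largest} integer with $x\in I_{M+k_x}$; then the preimage $t_x:=\theta_{k_x}^{-1}(x)$ automatically lies in $[0,\tfrac12-\tfrac1{2p})\cup(\tfrac12+\tfrac1{2p},1]$, and one applies the exact identity at level $k_x$, noting that \eqref{t43} (checked for $k$) continues to hold for all $k'\ge k$ because $E_{\mu,k'}$ scales by $p^{-(k'-k)}\le1$. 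This handles the flanks and the center uniformly, with no induction on $M$ and no appeal to the ``Wilson-type'' digit structure, which plays no role in the proof of this theorem. Your $M\mapsto M+1$ recursion could probably be made to work after checking the $\tau_{M+1}=\tau_M/p$ bookkeeping, but as presented it is a gap; the $k_x$ device is the missing idea.
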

\begin{proof}
For simplicity, we use the abbreviations (with an abuse of notation):
$\nabla G(\theta) :=G(\theta_k(t))-G(\mu)$, $\nabla\varphi(\theta) :=
\varphi(\theta_k(t))-\varphi(\mu)$, $\nabla \varphi :=
\varphi(t)-\varphi(\frac12)$, $\nabla \theta := \theta_k(t)-\mu$ and
$\nabla t := t-\frac12$. Observe first that a Taylor expansion yields
\begin{align}
	\theta^{-\rhop}-\mu^{-\rhop} 
	+ \rhop\mu^{-\rhop-1}(\theta-\mu)
	= J_{\theta,\mu}\mu^{-\rhop-1} (\theta-\mu)^2,
\end{align}
where 
\begin{align}\label{E:J}
	J_{\theta,\mu} := \mu^{\rhop+1}
	\rhop(\rhop+1)\int_0^1 
	x(\mu x+\theta(1-x))^{-\rhop-2}\text{d}x
\end{align}
remains positive whenever $\theta,\mu>0$ and $\theta\ne\mu$ 
(or $t\ne\frac12$). Applying this expansion, we obtain
\begin{align}
	\nabla G(\theta) 
	&= \theta_k(t)^{-\rhop}
	\bigl(\varphi(\theta_k(t))-\varphi(\mu)\bigr)
	+\varphi(\mu)\bigl(\theta_k(t)^{-\rhop}
	-\mu^{-\rhop} \bigr)\notag\\
	&= \mu^{-\rhop-1}\nabla\varphi(\theta)
	\lrpar{\mu-\rhop\nabla\theta
	+ J_{\theta_k(t),\mu}  
	\bigl(\nabla\theta\bigr)^2}\notag\\
	&\qquad - \varphi(\mu)\mu^{-\rhop-1}
	\lrpar{\rhop\nabla\theta
	-J_{\theta_k(t),\mu} \bigl(\nabla\theta \bigr)^2};
\end{align}
thus
\begin{equation}
	\begin{split}
		\mu^{\rhop+1}\nabla G(\theta) 
		&= \mu \nabla\varphi(\theta)  
		- \rhop\varphi(\mu)\nabla\theta
		- \rhop\nabla\varphi(\theta) \nabla\theta
		+J_{\theta_k(t),\mu}\varphi(\theta_k(t))
		\bigl(\nabla\theta \bigr)^2.\label{E:mb2}
	\end{split}
\end{equation}
By \eqref{t40}
\begin{equation}\label{E:mb3}
	p^{k}\nabla\theta
	=p^{-M}\nabla t.
\end{equation}
This, together with \eqref{E:mb2}, \refL{L4}, and the definitions  
\eqref{t42} and \eqref{t44} of $\mathcal{Q}_\mu$ and $E_{\mu,k}$, 
gives
\begin{equation}\label{mb4}
	p^{k}\nabla G
	=\mu^{-\rhop-1}(\mathcal{Q}_{\mu}(t)
	-E_{\mu,k}(t)+R_{\mu,k}(t)),
\end{equation}
where 
\begin{align}\label{mb4.5}
	R_{\mu,k}(t) := p^kJ_{\theta_k(t),\mu}\varphi(\theta_k(t))
	\bigl(\nabla\theta \bigr)^2.
\end{align}
We note for later use that, by \eqref{mb4.5} and \eqref{E:J},
\begin{align}\label{mb4.6}
  	R_{\mu,k}(t)>0,\qquad\text{if } \theta_k(t)\neq\mu
\quad(\text{i.e., } t\neq\tfrac12).
\end{align}
Since $p$, $M$, and $(a_i)_1^M$ are fixed, and $\gf(t)$ 
is bounded, we see that 
\begin{align}\label{mb5}
	R_{\mu,k}(t) 
	= O\lrpar{p^{-k}},
\end{align}
Similarly, \eqref{t44} implies that 
\begin{equation}\label{mb6}
	E_{\mu,k}(t)
	=O\left(p^{-k}\right).
\end{equation}
Hence, \eqref{t41} follows from \eqref{mb4}, \eqref{mb5}, and \eqref{mb6}. 

Now assume that (\ref{t43}) holds for some $k$. Then it also holds
for all larger $k$ as well, since $E_{\mu,k}(t)\ge0$ and the only
factor in \eqref{t44} that depends on $k$ is $p^{-k}$.

Let $x\in I_{M+k} =\left[\mu-\frac{1}{2}p^{-(M+k)},
\mu+\frac{1}{2}p^{-(M+k)}\right]$ with $x\neq \mu$. Let $k_{x}\ge k$
be the largest integer such that $x\in I_{M+k_x}$, and let $t_x\in\oi$
be such that $\theta_{k_x}(t_x)=x$. Then $x\notin I_{M+k_x+1}$ and 
thus
\begin{equation}
	t_{x}\in\left[0,\frac{1}{2}-\frac{1}{2p}\right)
	\cup\left(\frac{1}{2}+\frac{1}{2p},1\right].
\end{equation}
Hence, by \eqref{mb4}, \eqref{mb4.6}, and \eqref{t43},
\begin{equation}
	\begin{split}
    \mu^{\rhop+1}	
	p^{k_{x}}\left(G(x)-G(\mu)\right)
	&=\mu^{\rhop+1}
p^{k_{x}}\bigpar{G(\theta_{k_{x}}(t_{x}))-G(\mu)}\\
	&=\mathcal{Q}_{\mu}(t_x)-E_{\mu,k_x}(t_x)+R_{\mu,k_x}(t_x) \\
	&> \mathcal{Q}_{\mu}(t_x)-E_{\mu,k_x}(t_x) 
	\ge0.
	\end{split}
\end{equation}
Thus $G(x)>G(\mu)$ for every $x\neq\mu$ in $I_{M+k}$, which shows that
$\mu$ is the unique minimum point of $G$ in the interval $I_{M+k}$.
\end{proof}

\subsection{Monotonic majorization}

Once we convert the minimality of $G(s)$ at $s=\mu$ in $I_{M+k}$ to
the positivity of $\Delta_{\mu,k}(t) :=\mathcal{Q}_{\mu}(t)-
E_{\mu,k}(t)$ for $t$ in the unit interval excluding a small
neighborhood of $t=\frac12$ 
(see \eqref{t43}),
we then need means of handling the
positivity of the difference of two monotonic functions because
$\Delta_{\mu,k}(t)$ can be expressed as:
\begin{equation}\label{E:Delta}
	\begin{split}
	\Delta_{\mu,k}(t)
	&:= \mathcal{Q}_{\mu}(t)-E_{\mu,k}(t) \\
	&= \begin{cases}
		\frac{\rhop \varphi(\mu)}{p^M}
		\lrpar{\frac12-t} - \tau_M
		\lrpar{\varphi\left(\frac{1}{2}\right)-\varphi(t)}
		\lrpar{\mu+\frac{\rhop}{p^{M+k}}
	    \lrpar{\frac12-t}}, &\text{if }t\in[0,\tfrac12],\\
	    \tau_M\mu\lrpar{
	    \varphi(t)-\varphi\left(\frac{1}{2}\right)}
		-\frac{\rhop}{p^M}\lrpar{t-\frac12}
		\lrpar{\varphi(\mu)+
		\frac{\tau_M}{p^{k}}
		\lrpar{\varphi(t)-\varphi\left(\frac{1}{2}\right)}},
    	&\text{if }t\in[\frac12,1],
	\end{cases}
	\end{split}
\end{equation}
the first being the difference of two positive decreasing functions, 
the second that of two increasing functions, and the condition 
\eqref{t43} then being equivalent to $\Delta_{\mu,k}(t)\ge0$.

On the other hand, $G(s)=s^{-\rhop}\varphi(s)$ can also be regarded
as the ratio of two increasing functions. Thus to show that $G$
attains nowhere the minimum value $\beta$ for $s$ outside a small
neighborhood of $\mu$, we need to handle the ratio of two increasing
functions.

Since the function $\varphi(t)$ is of fractal type, we use the 
following simple idea.
\begin{lemma} \label{L3}
Assume that $f,g$ are increasing functions on $[a,b]$ with 
$f\ge0$ and 
$g>0$ 
there. If $f(a)/g(b)>C$, then $f(x)/g(x)>C$ for $x\in[a,b]$. 
Similarly, if $f(a)-g(b)>C$, then $f(x)-g(x)>C$ for $x\in[a,b]$.
\end{lemma}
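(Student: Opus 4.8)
The plan is to prove each implication by a direct two-step monotonicity argument; both parts are elementary once one keeps track of the direction of each inequality. First consider the ratio statement. Fix $x\in[a,b]$. Since $f$ is increasing and $f(a)\ge0$, we have $f(x)\ge f(a)\ge0$; since $g$ is increasing and $g>0$, we have $0<g(x)\le g(b)$. Combining these with the hypothesis $f(a)/g(b)>C$ and the fact that all quantities involved are nonnegative (so the inequalities may be chained multiplicatively), we get
\begin{equation}
	\frac{f(x)}{g(x)}
	\ge \frac{f(a)}{g(x)}
	\ge \frac{f(a)}{g(b)}
	> C,
\end{equation}
which is the desired conclusion. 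Here the first inequality uses $f(x)\ge f(a)$ with $g(x)>0$ fixed, and the second uses $g(x)\le g(b)$ in the denominator together with $f(a)\ge0$ in the numerator. (If one wishes $f(a)>0$ is not needed; the case $f(a)=0$ forces $C<0$, and then $f(x)/g(x)\ge0>C$ directly.)

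For the difference statement the argument is even simpler and does not use positivity: for $x\in[a,b]$, monotonicity gives $f(x)\ge f(a)$ and $g(x)\le g(b)$, hence
\begin{equation}
	f(x)-g(x)
	\ge f(a)-g(x)
	\ge f(a)-g(b)
	> C.
\end{equation}

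There is essentially no obstacle here — the lemma is a packaging of the trivial observation that to bound a ratio (or difference) of increasing functions from below on an interval it suffices to use the left endpoint for the numerator and the right endpoint for the denominator (resp.\ the subtrahend). The only point requiring a moment's care is the sign bookkeeping in the multiplicative chain for the ratio: one must check that $f(a)\ge0$ and $g(x)>0$ legitimize replacing $g(x)$ by the larger $g(b)$ in the denominator without flipping the inequality, which is exactly why the hypotheses $f\ge0$ and $g>0$ are imposed. The intended application is to $G(s)=s^{-\rhop}\varphi(s)$ written as $f(s)/g(s)$ with $f(s)=\varphi(s)$ and $g(s)=s^{\rhop}$ (both increasing on a suitable subinterval, $\varphi$ being nondecreasing by \eqref{t32} and $s^{\rhop}$ increasing since $\rhop>0$), so that on each piece of a partition of $[p\qw,1]\setminus J$ the crude endpoint bound $\varphi(a)/b^{\rhop}>\beta$ certifies $G(s)>\beta$ throughout that piece; similarly the difference form applies to $\Delta_{\mu,k}(t)$ via the monotone decompositions recorded in \eqref{E:Delta}.
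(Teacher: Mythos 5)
Your proof is correct and follows essentially the same two-step monotonicity argument as the paper's (lower-bound $f$ at the left endpoint, upper-bound $g$ at the right endpoint, using the sign hypotheses to justify the chain of inequalities for the ratio). The extra bookkeeping you include is fine but not a different method.
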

\begin{proof}
By monotonicity, for $x\in[a,b]$,
\begin{align}
	\frac{f(x)}{g(x)}\ge\frac{f(a)}{g(b)}>C. 
\end{align}
The difference version is similar: $f(x)-g(x)\geq f(a)-g(b)>C$.
\end{proof}
The case when $f, g$ are decreasing functions is similar.

In particular, for $\delta>0$, if $(x+\delta)^{-\rhop}\varphi(x)
>\beta$, then $G(t)=t^{-\rhop}\varphi(t)>\beta$ for $t\in
[x,x+\delta]$.

Such a simple idea will be applied numerically to sufficiently small 
subintervals after a suitable partition of the target interval.

\section{Proof of \refT{T1} for $p=3,5,7$}\label{Spf} 

We begin with the proof of \eqref{E:Bml}. Again $p\ge3$ is a prime 
number.
\begin{lemma} \label{L:G-hat-s}
$G(\hat{s}_p)=B_{\xi,\eta}$ given in \eqref{E:Bml}, 
where $\hat{s}_p=\frac{2\xi+1}{2p} -\frac{\eta}{p^2}$. 
\end{lemma}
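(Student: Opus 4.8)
The plan is to compute $G(\hat{s}_p)=\hat{s}_p^{-\rhop}\gf(\hat{s}_p)$ directly from the explicit series formula \eqref{t32} for $\gf$, using the $p$-ary expansion \eqref{E:hat-sp2} of $\hat{s}_p$, namely $b_1=\xi$, $b_2=\frac{p-1}2-\eta$, and $b_j=\frac{p-1}2$ for all $j\ge3$. Since $\hat{s}_p^{-\rhop}=A^{-\log_p\hat{s}_p}$ and $\hat{s}_p=\frac{2\xi+1}{2p}-\frac{\eta}{p^2}=\frac1p\bigl(\frac{2\xi+1}{2}-\frac{\eta}{p}\bigr)$, the factor $\hat{s}_p^{-\rhop}=\bigl(\frac{2\xi+1}{2}-\frac{\eta}{p}\bigr)^{-\rhop}$ already matches the last factor of \eqref{E:Bml}, so the entire content of the lemma is the evaluation $\gf(\hat{s}_p)=\binom{\xi+1}{2}\bigl(1+\frac{(p-2\eta)(p-2\eta+1)}{2\xi p(p+1)}\bigr)$.

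To carry this out I would split the sum $\gf\bigl(\sum_{j\ge1}b_jp^{-j}\bigr)=\frac12\sum_{j\ge1}A^{-j}b_j\prod_{1\le i\le j}(b_i+1)$ into the $j=1$ term, the $j=2$ term, and the tail $j\ge3$. The $j=1$ term is $\frac12 A^{-1}\xi(\xi+1)=\frac{\xi(\xi+1)}{2A}=\binom{\xi+1}{2}/A$. The $j=2$ term is $\frac12 A^{-2}(\xi+1)\bigl(\frac{p-1}2-\eta+1\bigr)\bigl(\frac{p-1}2-\eta\bigr)=\frac12 A^{-2}(\xi+1)\cdot\frac{(p+1-2\eta)(p-1-2\eta)}{4}$. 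For the tail $j\ge3$, every factor $b_i+1$ for $i\ge3$ equals $\frac{p+1}2$, and the prefactor $\prod_{1\le i\le 2}(b_i+1)=(\xi+1)\bigl(\frac{p+1}2-\eta\bigr)$ is common; so the tail is a geometric series $\frac12 A^{-2}(\xi+1)\bigl(\frac{p+1}2-\eta\bigr)\cdot\frac{p-1}2\sum_{j\ge3}A^{-(j-2)}\bigl(\frac{p+1}2\bigr)^{j-2}$, and since $\frac{p+1}{2A}=\frac1p$, this sum is $\sum_{\ell\ge1}p^{-\ell}=\frac1{p-1}$. Assembling the three pieces over a common structure and factoring out $\binom{\xi+1}{2}/A$ should yield exactly $\frac{1}{A}\binom{\xi+1}{2}\bigl(1+\frac{(p-2\eta)(p-2\eta+1)}{2\xi p(p+1)}\bigr)$ — note the shift from $(p\pm1-2\eta)$ in the intermediate terms to $(p-2\eta)$ and $(p-2\eta+1)$ in the final form, which is where the $j=2$ and tail contributions must combine.

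The only real obstacle is bookkeeping: one must check that the $j=2$ term (proportional to $(p+1-2\eta)(p-1-2\eta)$) together with the tail term (proportional to $(p+1-2\eta)(p-1)\cdot\frac{1}{p-1}=(p+1-2\eta)$, after the geometric sum) recombine so that the surviving quadratic is $(p-2\eta)(p-2\eta+1)$ rather than $(p+1-2\eta)(p-1-2\eta)$. Concretely, the bracket in the $j\ge2$ part, after dividing by the common factor $\frac12 A^{-2}(\xi+1)\bigl(\frac{p+1}2-\eta\bigr)$, is $\bigl(\frac{p-1}2-\eta\bigr)+\frac{p-1}2\cdot\frac{A}{p-1}\cdot\frac{2}{p+1}\cdot\frac{p-1}{?}$ — I would just verify $\bigl(\frac{p-1}2-\eta\bigr)+\frac{p-1}2\cdot\frac{1}{p-1}\cdot(p-1)$ type cancellations carefully, using $A=\binom{p+1}2=\frac{p(p+1)}2$ and $\frac{p+1}{2A}=\frac1p$ repeatedly. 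Once this algebra is done, multiplying $\gf(\hat s_p)$ by $\hat s_p^{-\rhop}=A^{-\log_p\hat s_p}$ and using $A^{-\log_p p}=A^{-1}$ to cancel the $1/A$ gives $G(\hat s_p)=B_{\xi,\eta}$, completing the proof.
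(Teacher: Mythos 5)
Your proposal follows essentially the same route as the paper: expand $\gf(\hat{s}_p)$ via \eqref{t32} using the $p$-ary digits $b_1=\xi$, $b_2=\frac{p-1}{2}-\eta$, $b_j=\frac{p-1}{2}$ for $j\ge3$, sum the $j=1$ term, $j=2$ term, and geometric tail, then multiply by $\hat{s}_p^{-\rhop}$. The algebra you left unfinished does close up cleanly — the $j\ge2$ contributions combine because $\bigl(\frac{p-1}{2}-\eta\bigr)+\frac12=\frac{p-2\eta}{2}$, giving $\frac12 A^{-2}(\xi+1)\cdot\frac{(p-2\eta+1)(p-2\eta)}{4}$ — but note a small internal inconsistency in your first paragraph: $\hat{s}_p^{-\rhop}=p^{\rhop}\bigl(\frac{2\xi+1}{2}-\frac{\eta}{p}\bigr)^{-\rhop}=A\bigl(\frac{2\xi+1}{2}-\frac{\eta}{p}\bigr)^{-\rhop}$, so it does \emph{not} already equal the last factor of \eqref{E:Bml}; the extra $A$ is exactly what cancels the $1/A$ in $\gf(\hat{s}_p)=\frac{\xi+1}{2A}\bigl(\xi+\frac{(p-2\eta)(p-2\eta+1)}{2p(p+1)}\bigr)$, as your final sentence correctly anticipates.
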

\begin{proof}
The $p$-ary expansion of $\hat{s}_p$ has the form $\hat{s}_p= 
(0.b_1b_2\dots)_p$ with $b_1=\xi$, $b_2=\frac{p-1}2-\eta$ and $b_j = 
\frac{p-1}2$ for $j\ge3$. Then, by \eqref{t32}, 
\begin{align}
	\frac{2\varphi(\hat{s}_p)}{\xi+1}
	&= \frac{\xi}{A}
	+ \frac{1}{A^2}\lrpar{\frac{p-1}2-\eta}
	\lrpar{\frac{p+1}2-\eta}\notag\\
	&\qquad+\frac{p-1}2
	\lrpar{\frac{p+1}2-\eta}
	\sum_{k\ge3}\frac1{A^k}
	\lrpar{\frac{p+1}2}^{k-2}\notag\\	
	&= \frac{\xi}{A}+\frac{(p-2\eta)(p-2\eta+1)}{4A^2},
\end{align}
or
\begin{align}\label{E:phi-hats}
	\varphi(\hat{s}_p)
	= \frac{\xi+1}{2A}\lrpar{\xi+
	\frac{(p-2\eta)(p-2\eta+1)}{2p(p+1)}}.
\end{align}
Thus 
\begin{align}
	G(\hat{s}_p)
	= \hat{s}_p^{-\rhop}\varphi(\hat{s}_p)
	= \frac{\xi+1}{2}\lrpar{\xi+
	\frac{(p-2\eta)(p-2\eta+1)}{2p(p+1)}}
	\lrpar{\frac{2\xi+1}{2}
	- \frac{\eta}{p}}^{-\varrho_p},
\end{align}
which is the same as \eqref{E:Bml}.
\end{proof}

We prove Wilson's conjecture for $p=3,5,7$, and establish the values
$\gb_3$, $\gb_5$, $\gb_7$ in \eqref{beta3} and \eqref{wc3}. Let
$s_1:=\hat{s}_p(1,0)=\frac{3}{2p}$, whose $p$-ary expansion is of the
form $(0.1bbb\dots)_p$, where $b=\frac{p-1}{2}$.

\subsection{Minimality of $G(s)$ in $I_2=[\frac{3}{2p}-\frac1{2p^2},
\frac{3}{2p}+\frac1{2p^2}]$}

Take $m=M=1$ and $k=1$ in Theorem~\ref{T4} so that 
$\mu=s_1=\frac{3}{2p}$ and $I_2 = [\frac{3}{2p}-\frac1{2p^2},
\frac{3}{2p}+\frac1{2p^2}]$. 
We have $a_1=1$ and we obtain from \eqref{tau}
and \eqref{E:phi-hats} with $(\xi,\eta)=(1,0)$, 
\begin{align}
  \tau_1=\frac{2}{A}
\qquad\text{and}\qquad
	\varphi(\mu)=\frac{3}{2A}.
\end{align}
Then \eqref{t42} and \eqref{t44} in \refT{T4} yield
\begin{align}
	\mathcal{Q}_{s_1}(t)
	&=\frac{3}{pA}\biggl(\underbrace{\varphi(t)
	-\varphi\left(\frac{1}{2}\right)}_{=:K_1(t)}
	\underbrace{-\frac{\rhop}{2}
	\left(t-\frac{1}{2}\right)}_{=:K_2(t)}\biggr),
\end{align}
and
\begin{equation}\label{mc2}
	E_{s_1,1}(t)
	=\frac{2\rhop}{Ap^2}\left(\varphi(t)
	-\varphi\left(\frac{1}{2}\right)\right)
	\left(t-\frac{1}{2}\right)
	=: \frac{3}{pA} K_3(t).
\end{equation}
Thus, we can write
\begin{align}
	\frac{pA}{3}\,\Delta_{\mu,1}(t)
	= \begin{cases}
	    K_2(t) - (-K_1(t)+K_3(t)),
		    &\text{if }t\in[0,\frac12],\\
		K_1(t) - (-K_2(t)+K_3(t)), 
		    &\text{if }t\in[\frac12,1],
	\end{cases}
\end{align}
where $K_2(t)$ and $-K_1(t)+K_3(t)$ are both positive and decreasing 
for $t\in[0,\frac12]$, and $K_1(t)$ and $-K_2(t)+K_3(t)$ are both 
positive and increasing for $t\in[\frac12,1]$. According to 
\refT{T4}, if 
\begin{align}\label{E:QE-357}
	\Delta_{\mu,1}(t)\ge0 \eqtext{for}
	t\in\left[0,\frac12-\frac1{2p}\right]
	\cup \left[\frac12+\frac1{2p},1\right],
\end{align}
then $s_1$ is the minimum of $G(s)$ for $s\in[\frac12-\frac1{2p^2},
\frac12+\frac1{2p^2}]$. To check the validity of \eqref{E:QE-357}, we
partition the two intervals in \eqref{E:QE-357} into
equally-spaced subintervals in each of which we apply the idea used in
Lemma~\ref{L3}; namely,  
for some (large) integers $N_1$ and $N_2$,
\begin{align}
	\begin{cases}
		K_2(t_{j+1})
		- \lrpar{-K_1(t_j)
		+ K_3(t_j)}\ge0,
		&\text{with }t_j=\frac{(p-1)j}{2pN_1},\\
		K_1(\tilde{t}_{j})
		- \lrpar{-K_2(\tilde{t}_{j+1})
		+ K_3(\tilde{t}_{j+1})}\ge0,
		&\text{with }\tilde{t}_j=
		\frac12+\frac1{2p}+\frac{(p-1)j}{2pN_2},
	\end{cases}
\end{align}
for $j=0,1,\dots,N_i-1$, $i=1,2$. This process is purely numerical 
and brings the condition \eqref{E:QE-357} into a finitely computable 
one.

For example, take $p=3$. Then $N_1=9$ is sufficient for 
$t\in[0,\frac12-\frac1{2p}]$, and $N_2=7$ for 
$t\in[\frac12+\frac1{2p},1]$. More precisely, we have the numerical 
values in each case:
\begin{center}
\renewcommand{\arraystretch}{1.5} 	
\begin{tabular}{c|ccccccccc}
	\multicolumn{10}{c}{$\Delta_{\mu,1}(t)> K_2(t_{j+1})
		- \lrpar{-K_1(t_j)+ K_3(t_j)}$, 
	$t_j=\frac{(p-1)j}{2pN_1}$ \& $N_1=9$}\\ \hline
	$j$ & $1$ & $2$ & $3$ & $4$ & $5$ & $6$ & $7$ & $8$ & $9$ \\ 
	$\Delta_{\mu,1}>$ & $0.082$ & $0.060$ & $0.044$ & $0.033$ 
	& $0.016$ & $0.009$ & $0.012$ & $0.00001$ & $0.001$ 
\end{tabular}

\medskip

\begin{tabular}{c|ccccccccc}
	\multicolumn{8}{c}{
	\makecell{$\Delta_{\mu,1}(t)> K_1(\tilde{t}_{j})
		- \lrpar{-K_2(\tilde{t}_{j+1})
		+ K_3(\tilde{t}_{j+1})}$\\ 
	$\tilde t_j=\frac12+\frac1{2p}+\frac{(p-1)j}{2pN_2}$ 
	\& $N_2=7$}}\\ \hline
	$j$ & $1$ & $2$ & $3$ & $4$ & $5$ & $6$ & $7$ \\ 
	$\Delta_{\mu,1}>$ & $0.054$ & $0.023$ & $0.013$ & $0.006$ 
	& $0.016$ & $0.043$ & $0.023$ 
\end{tabular}
\end{center}
Similarly, for $p=5$, we use $(N_1,N_2)=(35,10)$, and for $p=7$, 
$(N_1,N_2)=(114,17)$, respectively. 

In this way, we prove, by Theorem~\ref{T4}, that $s=s_1=\frac3{2p}$
is the minimum of $G(s)$ for $s\in I_2 =[s_1-\frac1{2p^2}, 
s_1+\frac1{2p^2}]$.

\subsection{$G(s)$ in $[\frac1p,1]\setminus I_2$}

Following the same numerical procedure used above for justifying the
minimality of $G(s)$ at $s=s_1$ for $s\in I_2$, we partition the two
intervals $[\frac1p, \frac3{2p}-\frac1{2p^2}]$ and
$[\frac3{2p}+\frac1{2p^2}, 1]$ into $N_3$ and $N_4$ subintervals, and
check, by the simple monotonic bounds in Lemma~\ref{L3}, that
$G(s)>\beta$ for $s$ in each of these subintervals.

Since $G(s) = s^{-\rhop}\varphi(s)$ is the ratio of two increasing 
functions in the unit interval, we check the conditions 
\begin{equation}\label{E:Gsb-357}
	\begin{split}
	&G(s)-\beta \\
	&\quad> \left\{\begin{array}{lll}
		\sigma_{j+1}^{-\rhop}\varphi(\sigma_j)-\beta>0
		,& \text{if }\sigma_j 
		:= \frac1p+\frac{(p-1)j}{2p^2N_3},
		& j=0,\dots,N_3-1\\
		\tilde\sigma_{j+1}^{-\rhop}
		\varphi(\tilde\sigma_{j})-\beta>0
		,& \text{if }\tilde\sigma_j 
		:= \frac3{2p}+\frac1{2p^2}+
		\frac{(2p^2-3p-1)j}{2p^2N_4},&
		j=0,1,\dots,N_4-1,
	\end{array}\right.
	\end{split}
\end{equation}
for $s$ in each of the subintervals $[\sigma_j, \sigma_{j+1}]$ and 
$[\tilde\sigma_j, \tilde\sigma_{j+1}]$, respectively.

For example, for $p=3$, we can take $N_3=9$ and $N_4=16$ for which 
the corresponding numerical values are listed as follows. 

\begin{center}
\begin{tabular}{c|ccccccccc}
	\multicolumn{10}{c}{$G(s)-\beta>
	\sigma_{j+1}^{-\rhop}\varphi(\sigma_j)
	-\beta>0$, $\sigma_j$ given in \eqref{E:Gsb-357} 
	\& $N_3=9$}\\ \hline
	$j$ & $1$ & $2$ & $3$ & $4$ & $5$ & $6$ & $7$ & $8$ & $9$ \\ 
	\makecell{$\sigma_{j+1}^{-\rhop}\varphi(\sigma_j)
	-\beta$}
	& $0.168$ & $0.123$ & $0.091$ & $0.067$ 
	& $0.040$ & $0.027$ & $0.023$ & $0.008$ & $0.008$
\end{tabular}	

\medskip

\begin{tabular}{c|cccccccc}
	\multicolumn{9}{c}{$G(s)-\beta>
	\tilde\sigma_{j+1}^{-\rhop}\varphi(\tilde\sigma_{j})
	-\beta>0$, $\tilde\sigma_j$ given in \eqref{E:Gsb-357} 
	\& $N_4=16$}\\ \hline
	$j$ & $1$ & $2$ & $3$ & $4$ & $5$ & $6$ & $7$ & $8$ \\ 
	\makecell{$\tilde\sigma_{j+1}^{-\rhop}
	\varphi(\tilde\sigma_{j})-\beta$)}
	& $0.029$ & $0.002$ & $0.006$ & $0.044$ & $0.131$ 
	& $0.090$ & $0.060$ & $0.049$ \\ \hline
	$j$ & $9$ & $10$ & $11$ & $12$ & $13$ & $14$ & $15$
	& $16$ \\
	$\tilde\sigma_{j+1}^{-\rhop}
	\varphi(\tilde\sigma_{j})-\beta$
	& $0.054$ & $0.033$ & $0.026$ & $0.042$ & $0.088$ 
	& $0.076$ & $0.079$ & $0.112$
\end{tabular}	
\end{center}

Similarly, for $p=5$, we can take $(N_3,N_4)=(35, 77)$, and for
$p=7$, $(N_3,N_4)=(147, 214)$. 
This completes the proof that $G(s)$ attains its minimum 
in $[\frac1p,1]$
at $s_1$ for $p=3,5,7$;
consequently $\gb_p=G(s_1)$ which yields 
the values $\gb_3$, $\gb_5$, $\gb_7$ in
\eqref{beta3} and \eqref{wc3} and
proves 
\refT{T1} (and Wilson's conjecture) for these $p$.
 
\begin{figure}
\begin{center}	
	\begin{tabular}{ccc}
		\includegraphics[scale=0.2]{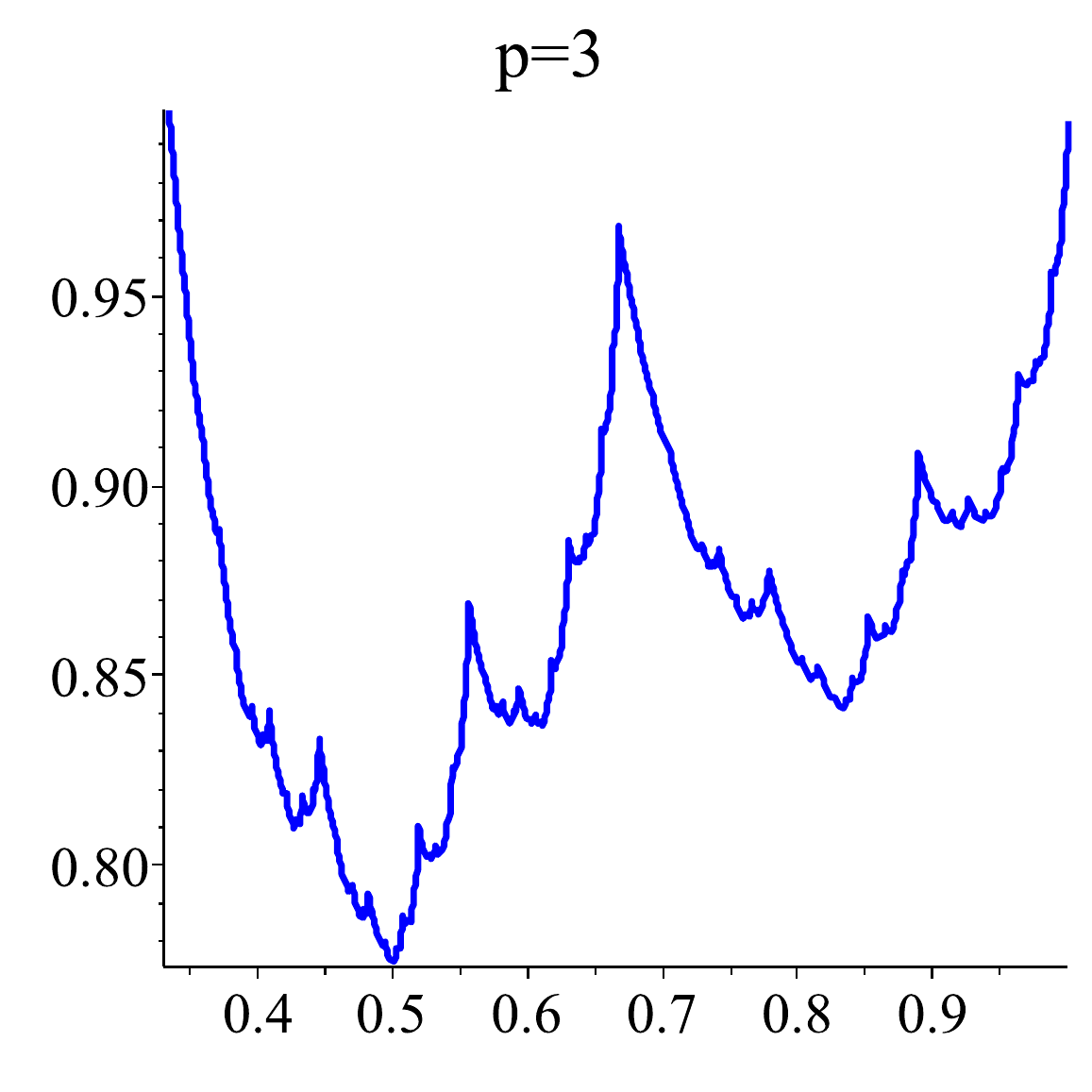}
		& \includegraphics[scale=0.2]{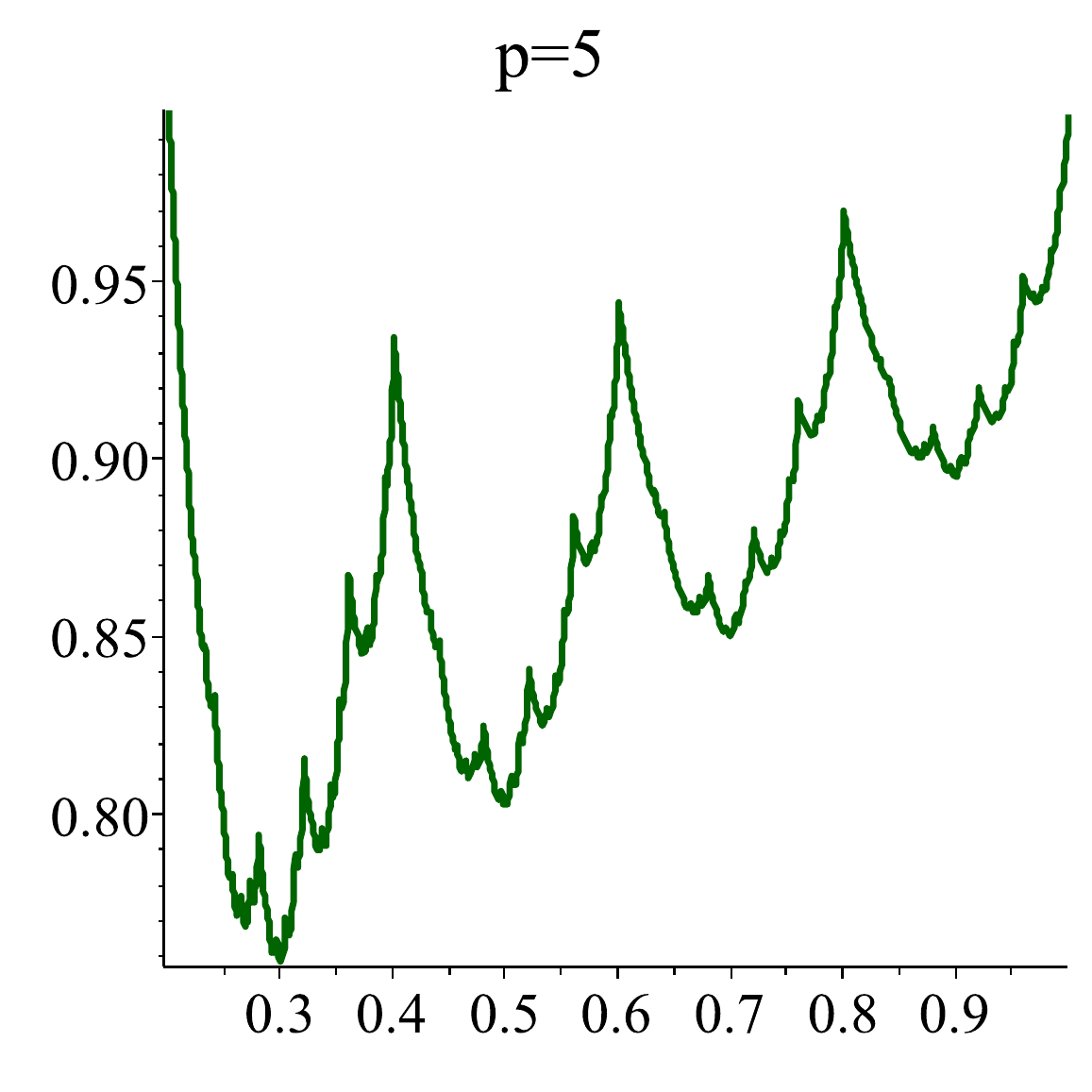}
		& \includegraphics[scale=0.2]{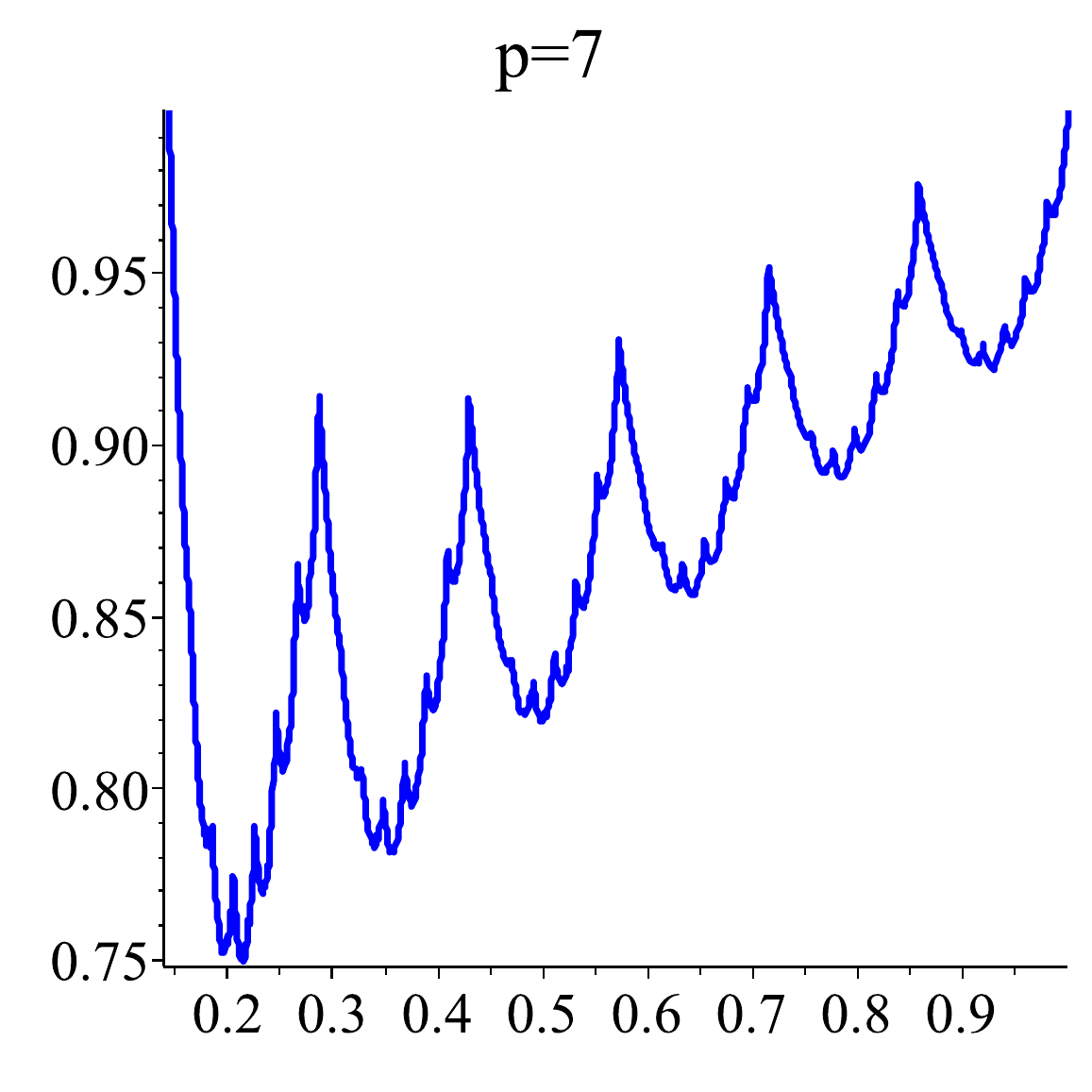}
	\end{tabular}
\end{center}
\caption{Fluctuations of $G(s)$, $s\in[p\qw,1]$, for $p=3,5,7$.}
\label{G357}
\end{figure}

\begin{figure}
\begin{center}	
	\begin{tabular}{ccc}
		\includegraphics[scale=0.2]{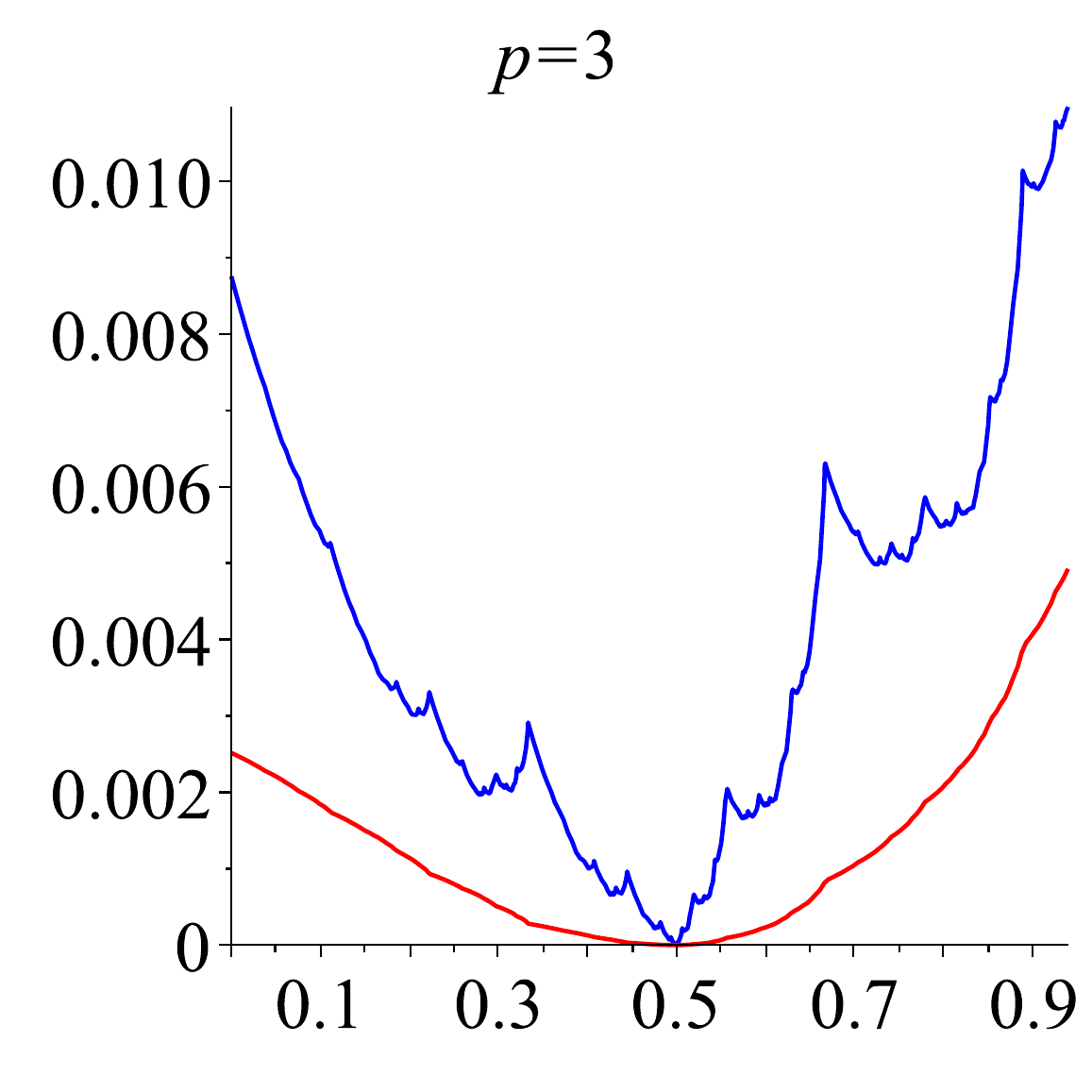}
		& \includegraphics[scale=0.2]{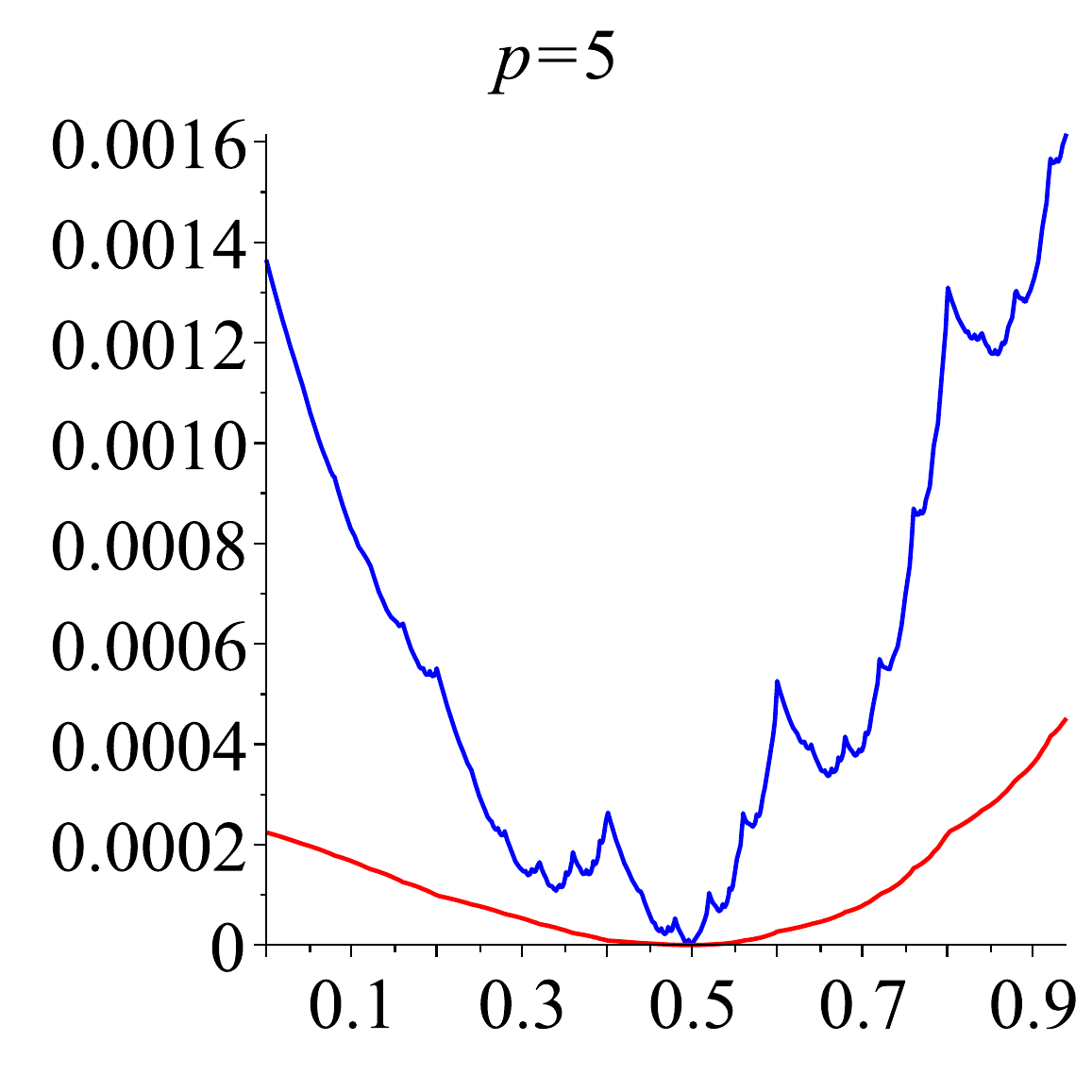}
		& \includegraphics[scale=0.2]{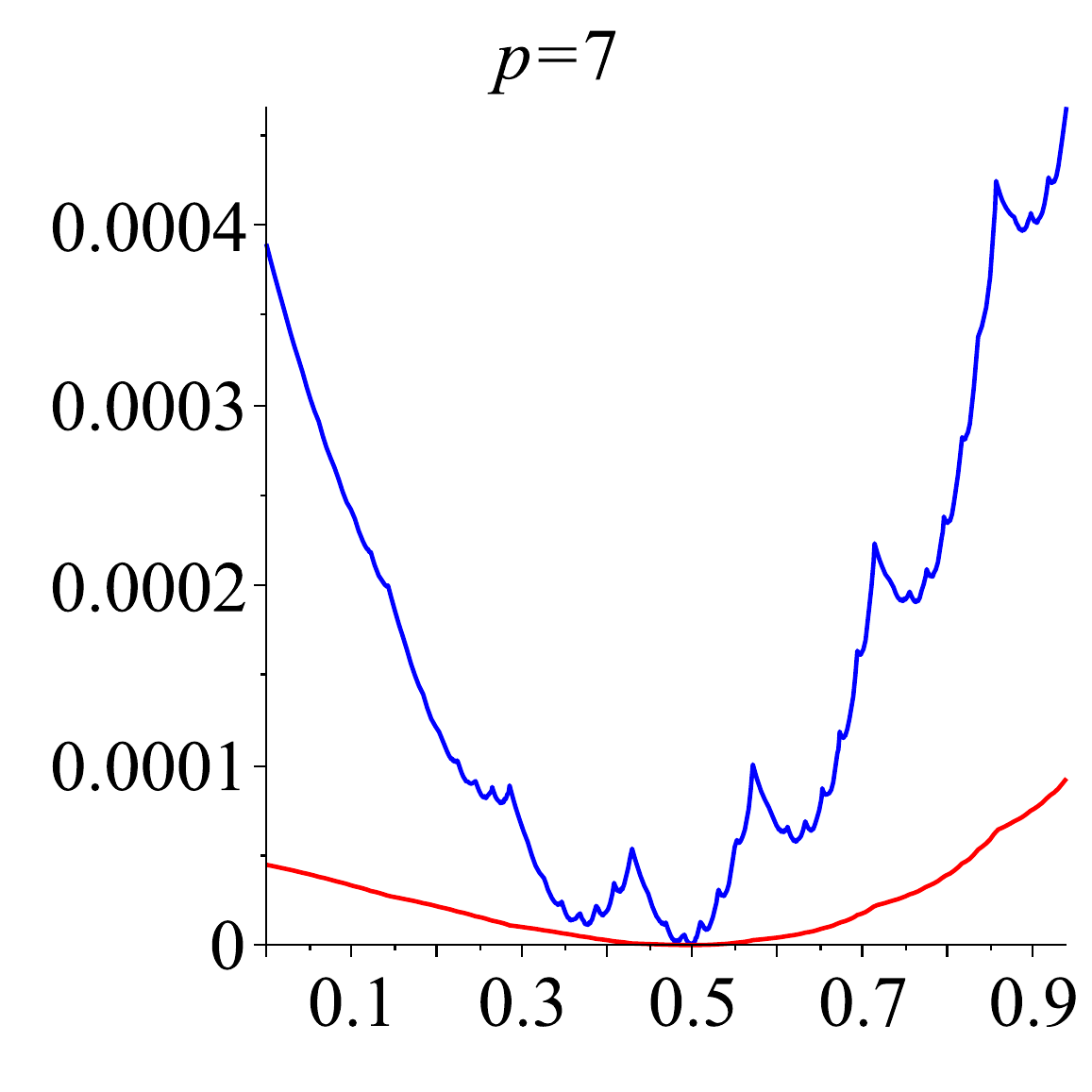}
	\end{tabular}
\end{center}
\caption{Graphical rendering of $\mathcal{Q}_{s_1}(t)$ (in blue) and 
$E_{s_1,1}(t)$ (in red) for $p=3,5,7$.}
\label{Q357}
\end{figure} 

\begin{figure}
\begin{center}	
	\begin{tabular}{ccc}
		\includegraphics[scale=0.2]{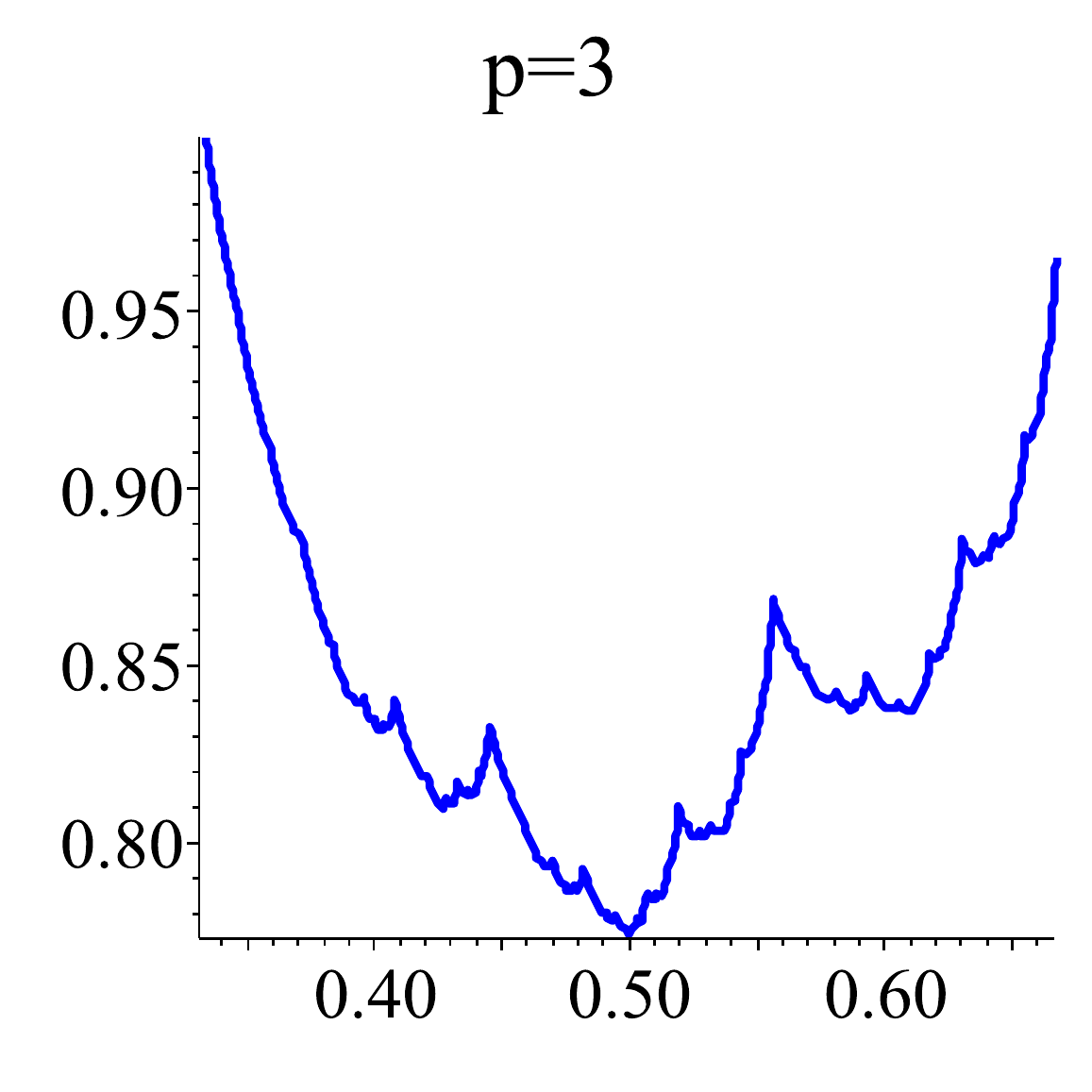}
		& \includegraphics[scale=0.2]{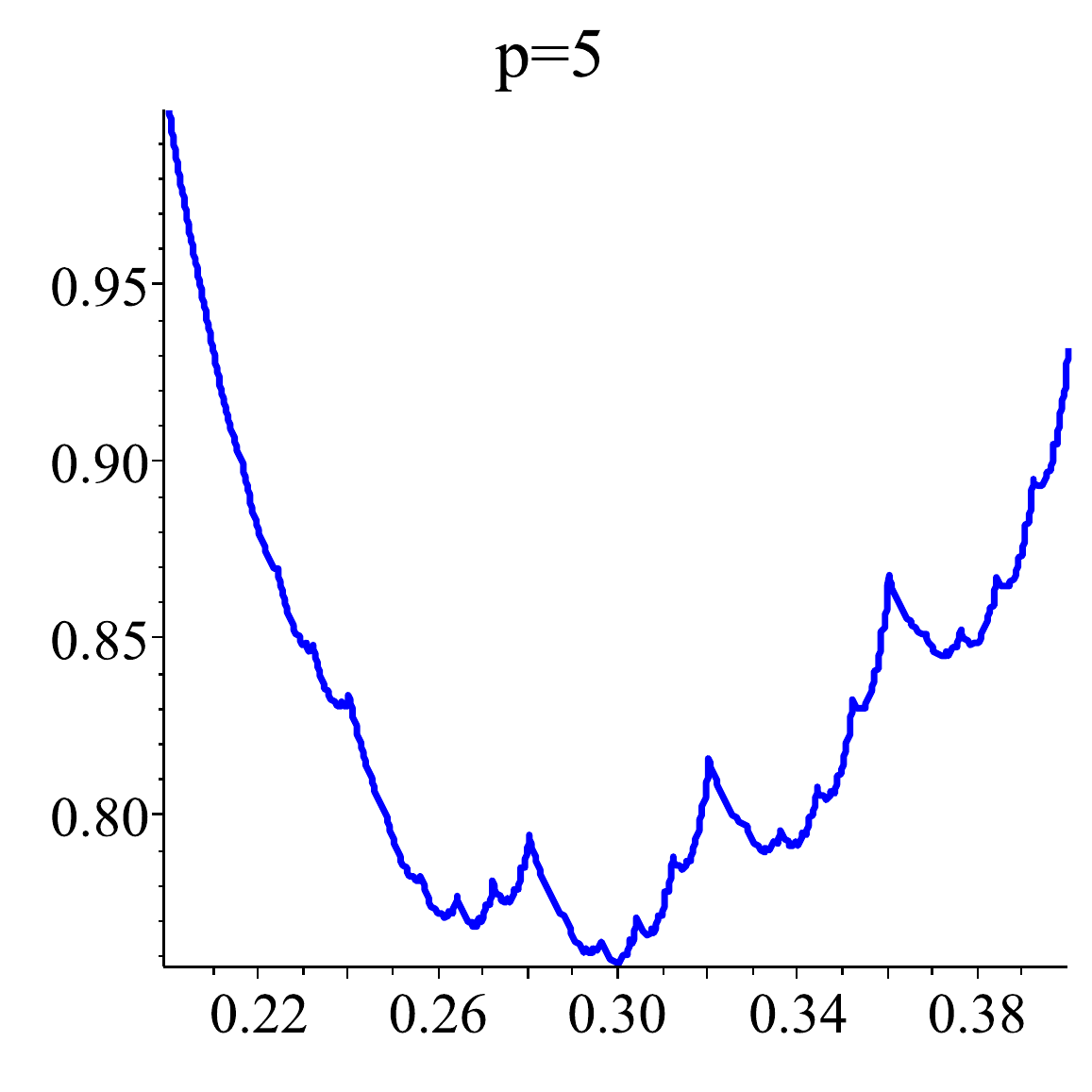}
		& \includegraphics[scale=0.2]{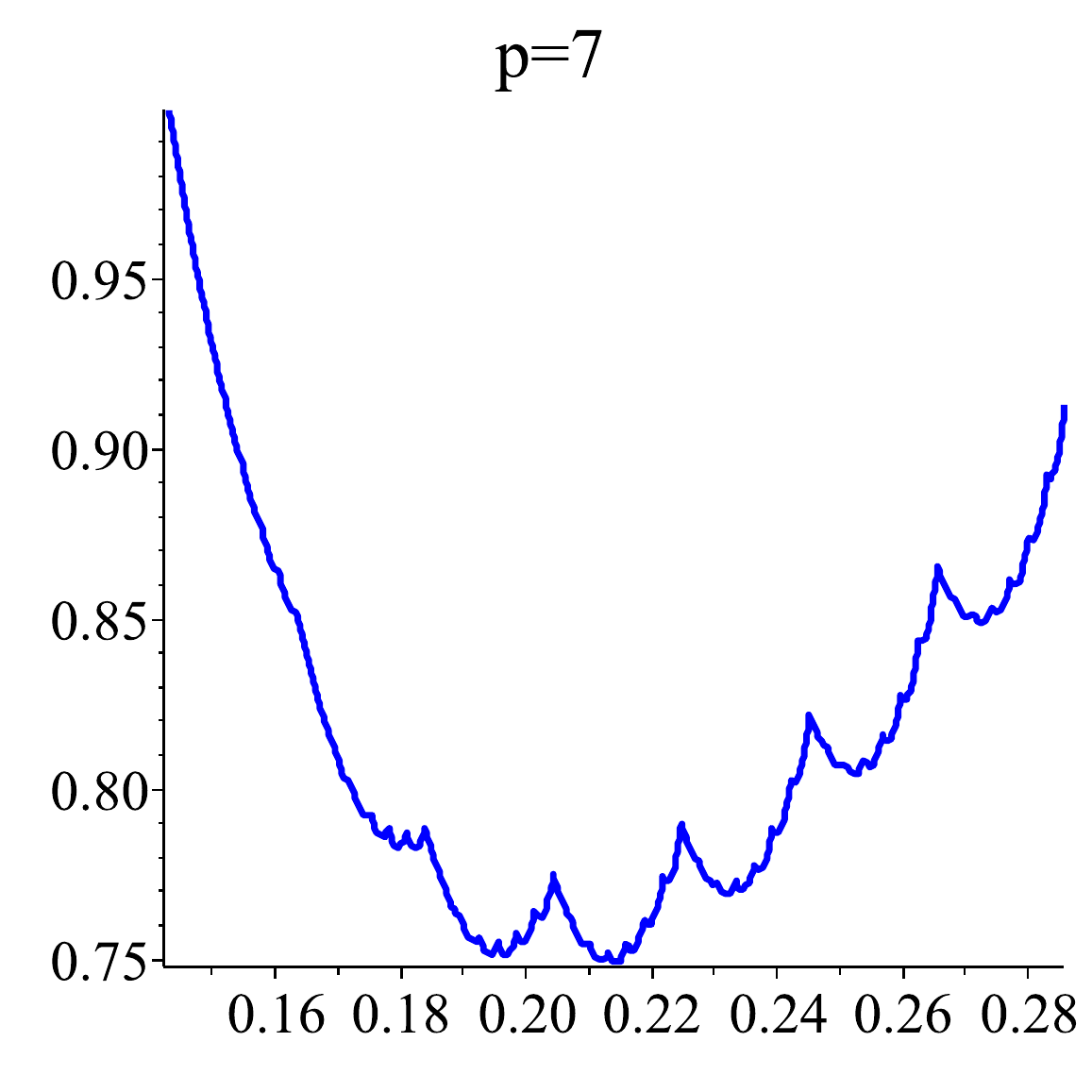}
	\end{tabular}
\end{center}
\caption{A closer look at the fluctuations of $G$ in the smaller
interval $\bigl[\frac{1}{p}, \frac{2}{p}\bigr]$ for
$p=3,5,7$.}\label{G357c}
\end{figure} 

\section{Proof of \refT{T1} for $p\ge11$}
\label{Spf2} 

When $p\ge11$, we take $M=2$, $k=0$ and 
$\mu = \hat{s}_p(\xi,\eta)=
\frac{2\xi+1}{2p}-\frac{\eta}{p^2}$ in Theorem~\ref{T4}, so that 
$a_1=\xi$, $a_2=\frac{p-1}{2}-\eta$
(recall \eqref{E:hat-sp2} and \eqref{t46}).
Then \eqref{tau} yields
\begin{align}
  	\tau_2=\frac{(\xi+1)(p+1-2\eta)}{2A^2},
\end{align}
and $\varphi(\mu)$ is given in \eqref{E:phi-hats}. Thus
\eqref{t42} and \eqref{t44} yield
\begin{align}
	\frac{p^2(\mathcal{Q}_{\mu}(t)-E_{\mu,0}(t))}
	{\rhop \varphi(\mu)}
	&= C(t)\left(\varphi(t)
	-\varphi\left(\frac{1}{2}\right)\right)	
	-\left(t-\frac{1}{2}\right),
\end{align}
where
\begin{align}
	C(t) := \frac{\tau_2^{}}{\rhop \varphi(\mu)}
	\lrpar{p^2\mu - \rhop
	\left(t-\frac{1}{2}\right)}.
\end{align}

\subsection{$p=11, 13, 17, 19, 23$: $(\xi,\eta)=(1,1)$}

For $p=11$, $s_1=\frac{3}{22}=0.13636\dots$ is no longer the minimum
point of $G$; see Figures \ref{p11Q} and \ref{Gp11}. In this case,
Wilson \cite{wi1} conjectured (in an equivalent form) that the
minimum occurs at $s_2:=s_1-\frac{1}{p^2}=0.12809\dots$, which yields
his conjecture for $\gb_{11}$ in \eqref{wc3}; see \refL{L:G-hat-s}.
(In base $11$, $s_2=0.14555\dots$.) Numerically, we have 
$G(s_1)=0.7386\ldots$ and $G(s_2)=0.7364\ldots$.

To verify his conjecture, we apply \refT{T4} with $M=2, k=0$ and 
$(\xi,\eta)=(1,1)$; thus $\mu=s_2 = \frac{3}{2p}-\frac1{p^2}$, which 
gives $a_1=1$, $a_2=\frac{p-3}2$, and then by \eqref{t42} 
\begin{equation}
	\mathcal{Q}_{s_2}(t)
	=\frac{(p-1)s_2}{A^2}\left(\varphi(t)
	-\varphi\left(\frac{1}{2}\right)\right)
	-\frac{\rhop}{p^2}\varphi(s_2)
	\left(t-\frac{1}{2}\right),
\end{equation}
and by \eqref{t44} 
\begin{equation}
	E_{s_2,0}(t)
	=\frac{(p-1)\rhop}{p^{k+2}A^2}
	\left(\varphi(t)-\varphi\left(\frac{1}{2}\right)\right)
	\left(t-\frac{1}{2}\right);
\end{equation}
see Figure \ref{p11Q} for an illustration of the different effects 
for $\mathcal{Q}_{\mu}(t)$ and $E_{\mu,k}(t)$ between $s_1$ and $s_2$.

The same numerical recipes used in the previous section for $p=3, 5,
7$ applies here with $(N_1,N_2)=(40,148)$, which shows that
(\ref{t43}) holds for $\mu=s_2$ and $k=0$, but not for $\mu=s_1$.
Thus, \refT{T4} guarantees that $G(s_2)$ is the minimum of $G$ in the
interval $I_2=\left[\frac{15}{11^2}, \frac{16}{11^2}\right]$. The
same bounding techniques with $(N_3,N_4)= (32,236)$ also shows (see
Figure \ref{Gp11}) that $G(s)>G(s_2)$ outside
$\left[\frac{15}{11^2},\frac{16}{11^2}\right]$. Thus, $G(s_2)$ is the
minimum, and $\gb_{11}=G(s_2)$ (see \eqref{wc3}).

\begin{figure}
\begin{center}	
	\begin{tabular}{cc}
		\includegraphics[scale=0.25]{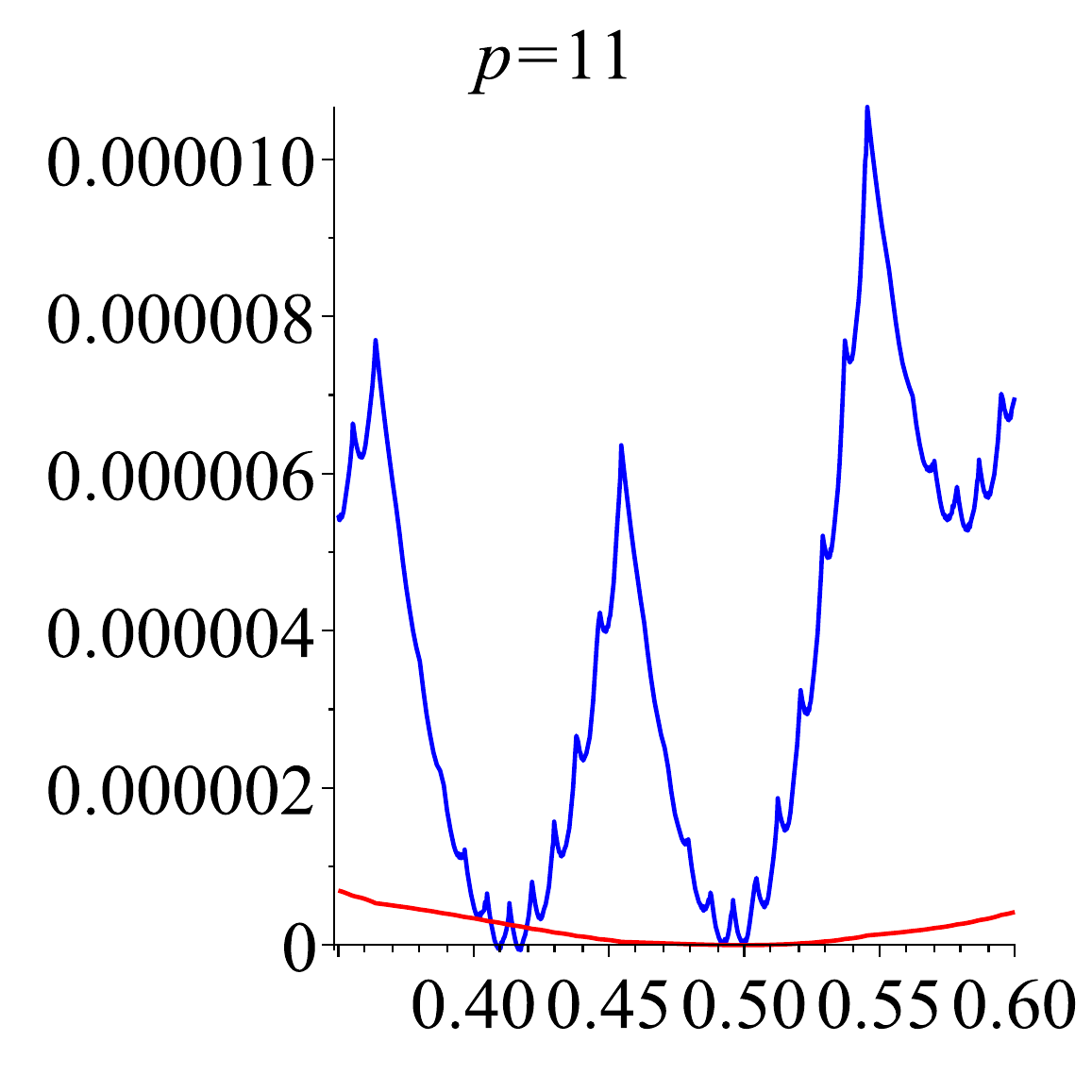}
		& \includegraphics[scale=0.25]{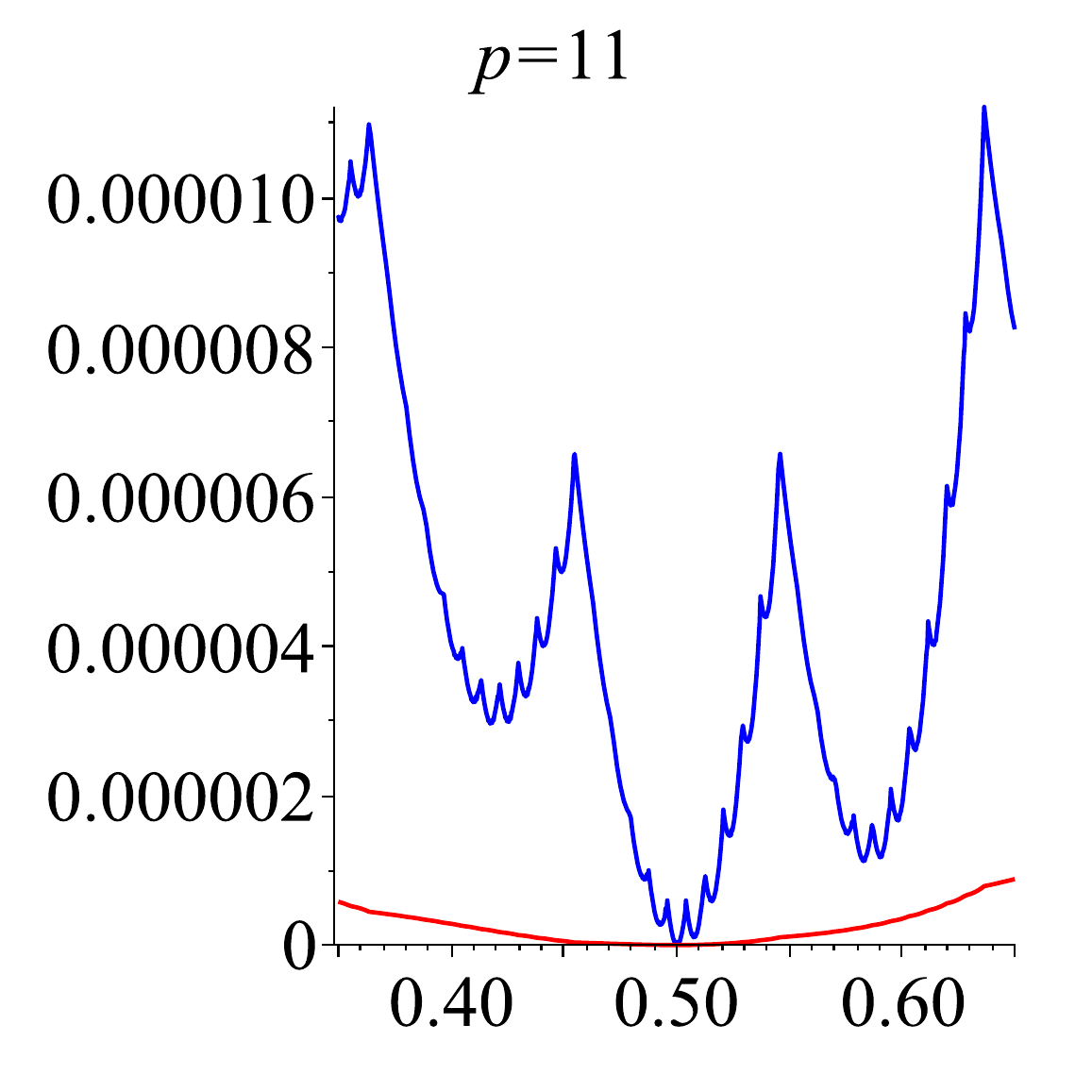}\\
		$\mathcal{Q}_{s_1}(t)$ vs $E_{s_1,0}$
		& $\mathcal{Q}_{s_2}(t)$ vs $E_{s_2,0}(t)$
	\end{tabular}
\end{center}
\caption{$p=11$: $\mathcal{Q}_{s_i}(t)$ (in blue) and $E_{s_i,0}$  (in red) for $i=1,2$.}\label{p11Q}
\end{figure} 

\begin{figure}
\begin{center}	
	\begin{tabular}{cc}
		\includegraphics[scale=0.25]{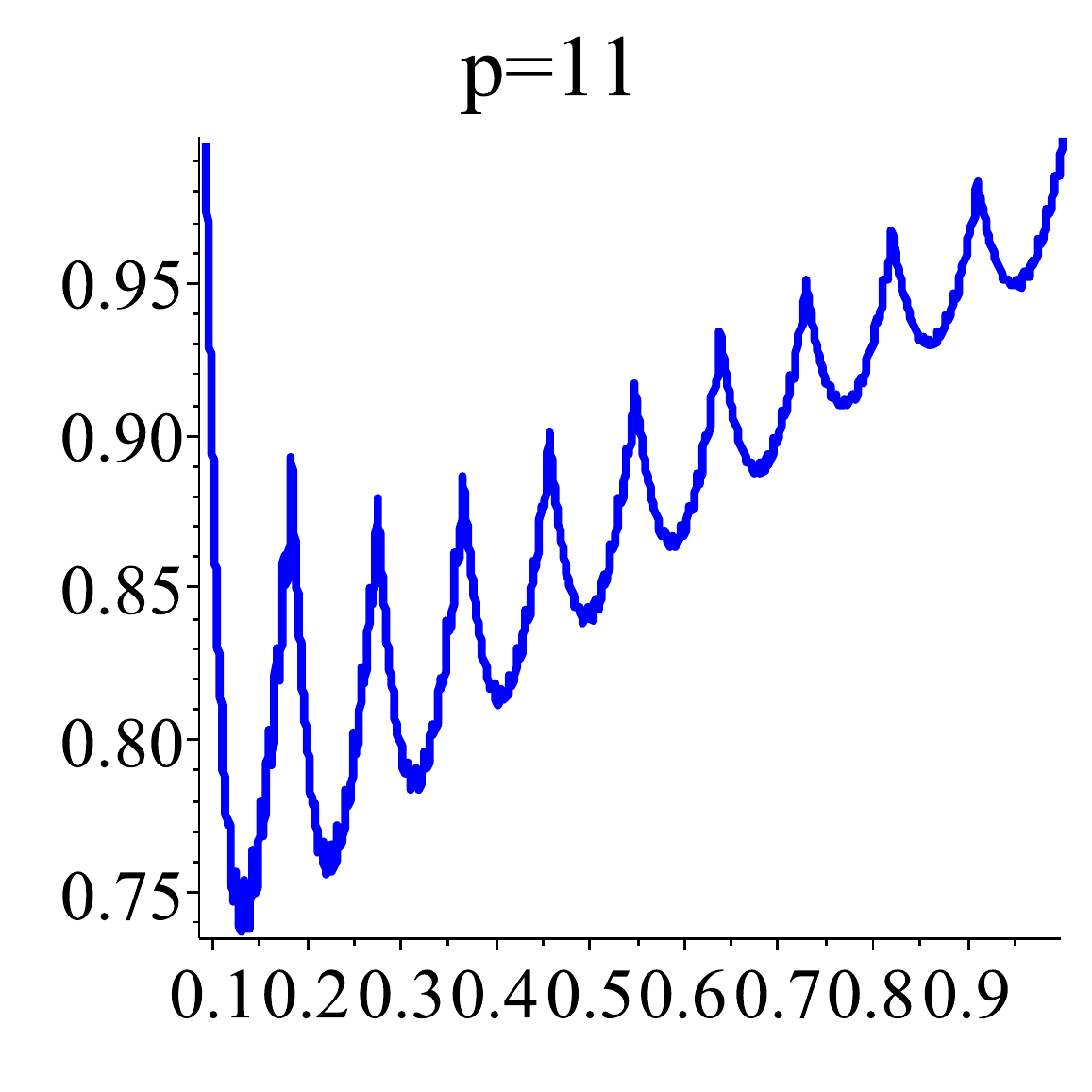}
		& \includegraphics[scale=0.25]{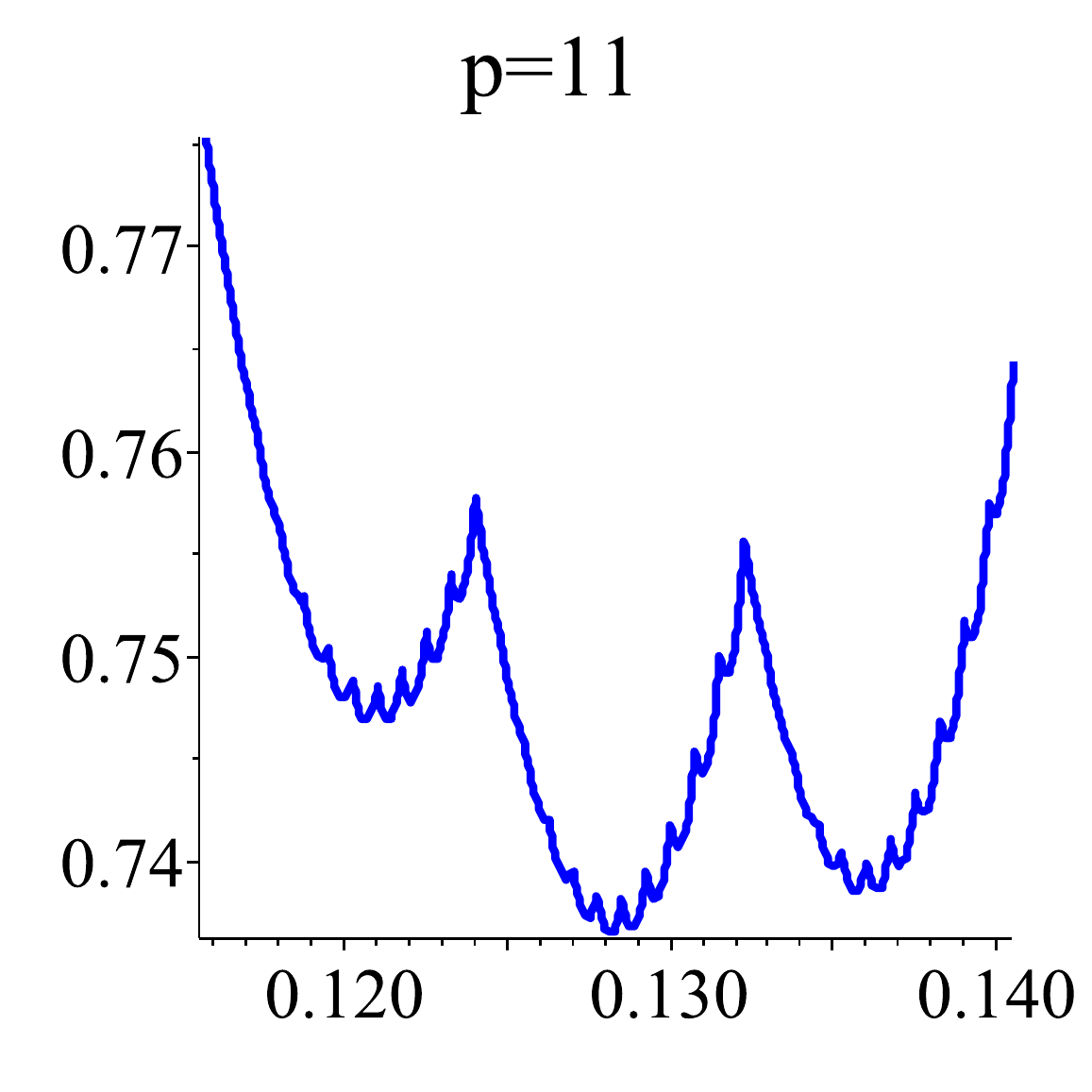}
	\end{tabular}
\end{center}
\caption{$p=11$: $G(x)$ for $x\in\left[ \frac{1}{11},1\right]$ (left)
and $\left[ \frac{14}{11^2}, \frac{17}{11^2}\right]$ (right),
respectively.}
\label{Gp11}
\end{figure} 

Similarly, the same procedure applies to $p=13,17,19,23$ with
\begin{center}
\begin{tabular}{c|c}
$p$ & $(N_1,N_2,N_3,N_4)$ \\ \hline
$13$ & $(62, 131, 53, 373)$ \\
$17$ & $(135, 132, 134, 331)$ \\
$19$ & $(211, 144, 257, 1517)$ \\
$23$ & $(611, 151, 992, 6812)$
\end{tabular}	
\end{center}
for which $G$ also attains its minimum $\gb_p$ at $s_2$; see Figures 
\ref{G13} and \ref{Q13}. 

This proves \refT{T1} for $11\le p\le 23$.
Using \refL{L:G-hat-s}, we obtain the 
values of $\beta_p$:
\begin{center}
\renewcommand{\arraystretch}{2} 
\begin{tabular}{ccccc}
$p$ & $13$ & $17$ & $19$ & $23$ \\ \hline
$\beta_p$ 
& $\dfrac{124}{91}\Bigl(\dfrac{26}{37}\Bigr)^{\varrho_{13}}$
& $\dfrac{71}{51}\Bigl(\dfrac{34}{49}\Bigr)^{\varrho_{17}}$ 
& $\dfrac{533}{380}\Bigl(\dfrac{38}{55}\Bigr)^{\varrho_{19}}$
& $\dfrac{261}{184}\Bigl(\dfrac{46}{67}\Bigr)^{\varrho_{23}}$
\end{tabular}	
\end{center}

\subsection{$p=29$: $(\xi,\eta)=(2,1)$}

For $p=29$, $s_2=\hat s_p(1,1)=\frac{3}{2p}-\frac{1}{p^2}$ is no longer the
global 
minimum point of $G(s)$. Instead $\hat s_p(1,2)=\frac{3}{2p}-\frac{2}{p^2}$
gives a 
smaller value of $G(s)$, and an even smaller value of $G$ is reached
at $s_3:=\hat{s}_p(2,1)=\frac{5}{2p}-\frac{1}{p^2}$, which can be proved to
be the 
minimum point by \refT{T4} with the same numerical recipes used 
above. 

When it comes to numerical check, a direct use of the preceding
numerical recipes gives the (minimum) numbers of partitions required
in each of the intervals $[0,\frac12-\frac1{2p}]$,
$[\frac12+\frac1{2p},1]$, $[\frac1p,\frac5{2p}-\frac3{2p^2}]$ and
$[\frac5{2p}-\frac1{2p^2}]$: $(N_1,N_2,N_3,N_4)= (3011, 216, 14996,
11942)$, respectively, which are somewhat too large. 
We used above subintervals of the same length, but this is not optimal, 
and the
computational complexity can be reduced by the following procedure:
instead of fixing first the interval and then finding a large enough 
number of subintervals (of equal length)
such that the monotonicity inequality holds in each of the subintervals,
we fix first $N$, the number
of subintervals to be processed in each step, 
and then check either from the left end
or the right end of the interval how far towards the other end of the
interval we can go with $N$ subintervals of the same size such that the
monotonicity inequality holds in all subintervals.  
Then repeat the same procedure until reaching the other end of the
interval. Alternatively, a binary splitting technique of the target
interval can be used to identify a range where $N$ partitions
suffice.

For example, to check if $\mathcal{Q}_{s_3}(t)>E_{s_3,0}(t)$ holds in
the interval $[0,\frac12-\frac1{2p}]$, we choose, say $N=p^2=841$,
which then suffices if we partition first $[0,\frac12-\frac1{2p}]$
into the two subintervals $[0,\frac12-\frac4p]$ and
$[\frac12-\frac4p, \frac12-\frac1{2p}]$, and then further partition
each into $N+1$ smaller subintervals before checking the monotonicity
inequality. Similarly, instead of using $N_3=14996$ for the interval
$[\frac1p, \frac5{2p}-\frac{3}{2p^2}]$, we choose again $N=p^2$ and
split first this interval into $[\frac1p, \frac3{p^2}]$ and
$[\frac5{2p}-\frac{3}{p^2},\frac5{2p}-\frac{3}{2p^2}]$ before the
numerical check in both subintervals. Finally, the use of the number
$N_4=11942$ for the interval $[\frac5{2p}-\frac1{2p^2},1]$ can be
replaced by taking $N=p^2$ and splitting
$[\frac5{2p}-\frac1{2p^2},1]$ into $[\frac5{2p}-\frac1{2p^2},\frac
4p]$ and $[\frac4{p},1]$.

\subsection{Primes from $31$ to $113$: $\xi=2$}

Exactly the same method of proof used above applies to higher values
of $p$ with the minimum point of $G$ in $[\frac{1}{p},1]$  given in
\eqref{T1-3-0}. The two-stage partitioning procedure is 
computationally more efficient. For example, when $p=113$, we have:
\begin{center}
\renewcommand{\arraystretch}{1.5}		
\begin{tabular}{cccc}
variable & 
Interval & \makecell{first \\partition} & \makecell{split each \\
into $N$ subintervals\\
(equal spacing)}\\ \hline
\multirow{2}{*}{$t$} &
$[0,\frac12-\frac1{2p}]$ & 
$[0,\frac12-\frac3p]\cup [\frac12-\frac3p,\frac12-\frac1{2p}]$
& $N=500$ \\ &
$[\frac12+\frac1{2p},1]$ &
$[\frac12+\frac1{2p},\frac12+\frac7{2p}] \cup 
[\frac12+\frac1{2p},1]$ & $N=700$\\ \hline
\multirow{2}{*}{$s$} &
$[\frac1p,\frac{5}{2p}-\frac{11}{2p^2}]$ & 
$[\frac1p,\frac5{2p}-\frac{11}{p^2}]\cup 
[\frac5{2p}-\frac{11}{p^2},
\frac5{2p}-\frac{11}{2p^2}]$ & $N=500$ \\  &
$[\frac5{2p}-\frac{9}{2p^2}, 1]$ & 
$[\frac5{2p}-\frac{9}{2p^2}, \frac5{2p}]$ $\cup$
$[\frac5{2p},\frac5{p}]$ $\cup$
$[\frac5{p},1]$ & $N=5000$ \\ \hline
\end{tabular}	
\end{center}

\begin{figure}
\begin{center}	
	\begin{tabular}{ccc}
		\includegraphics[scale=0.2]{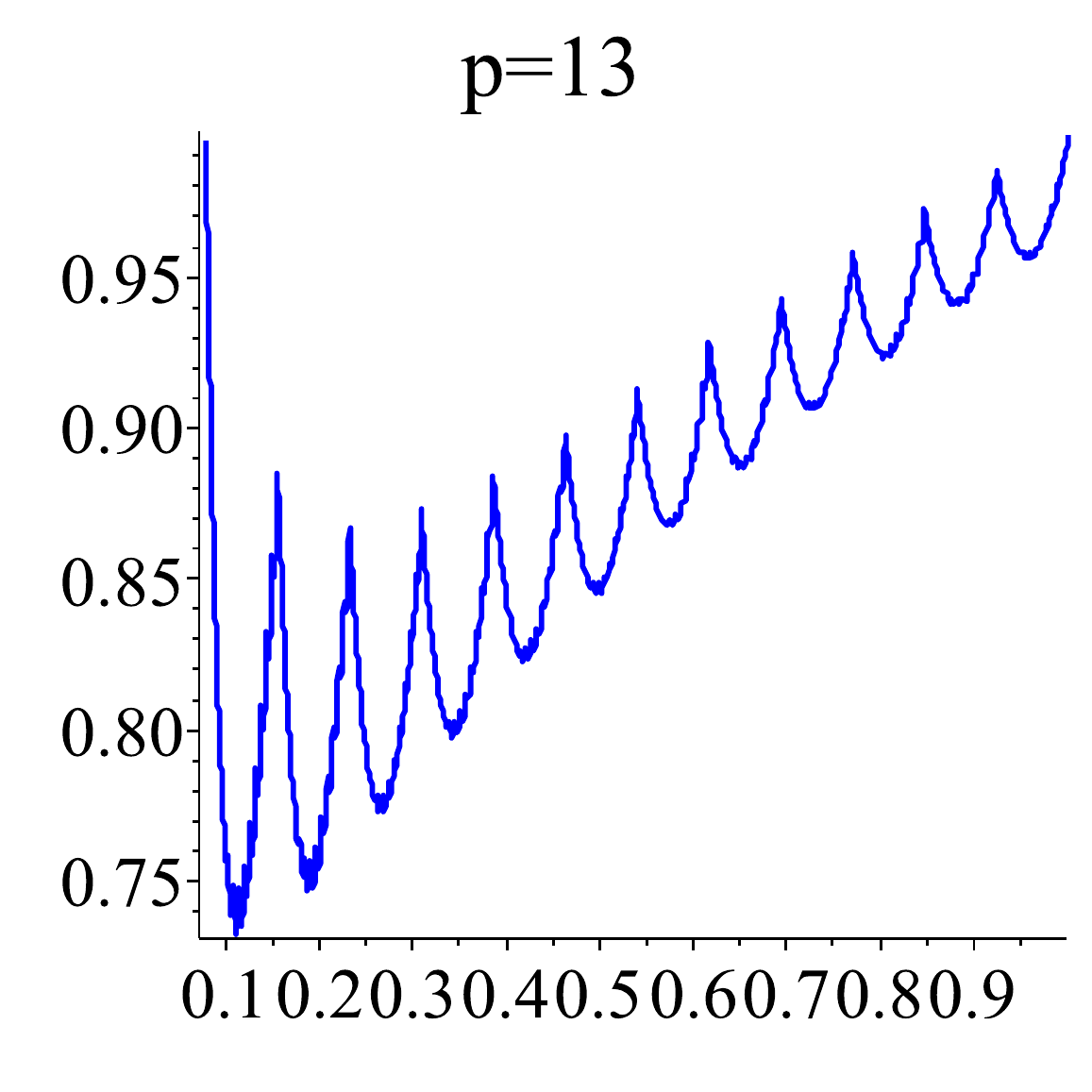}
		& \includegraphics[scale=0.2]{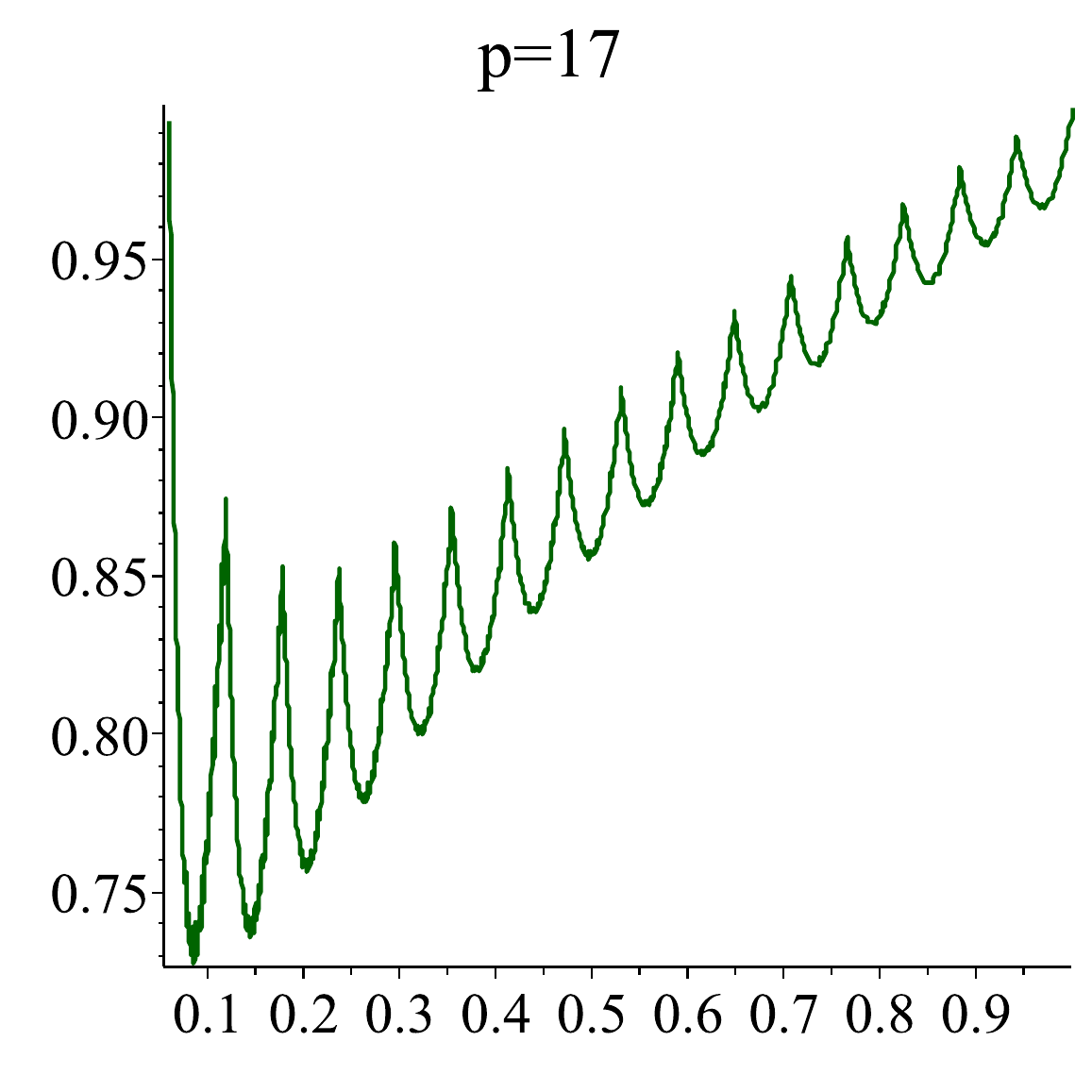}
		& \includegraphics[scale=0.2]{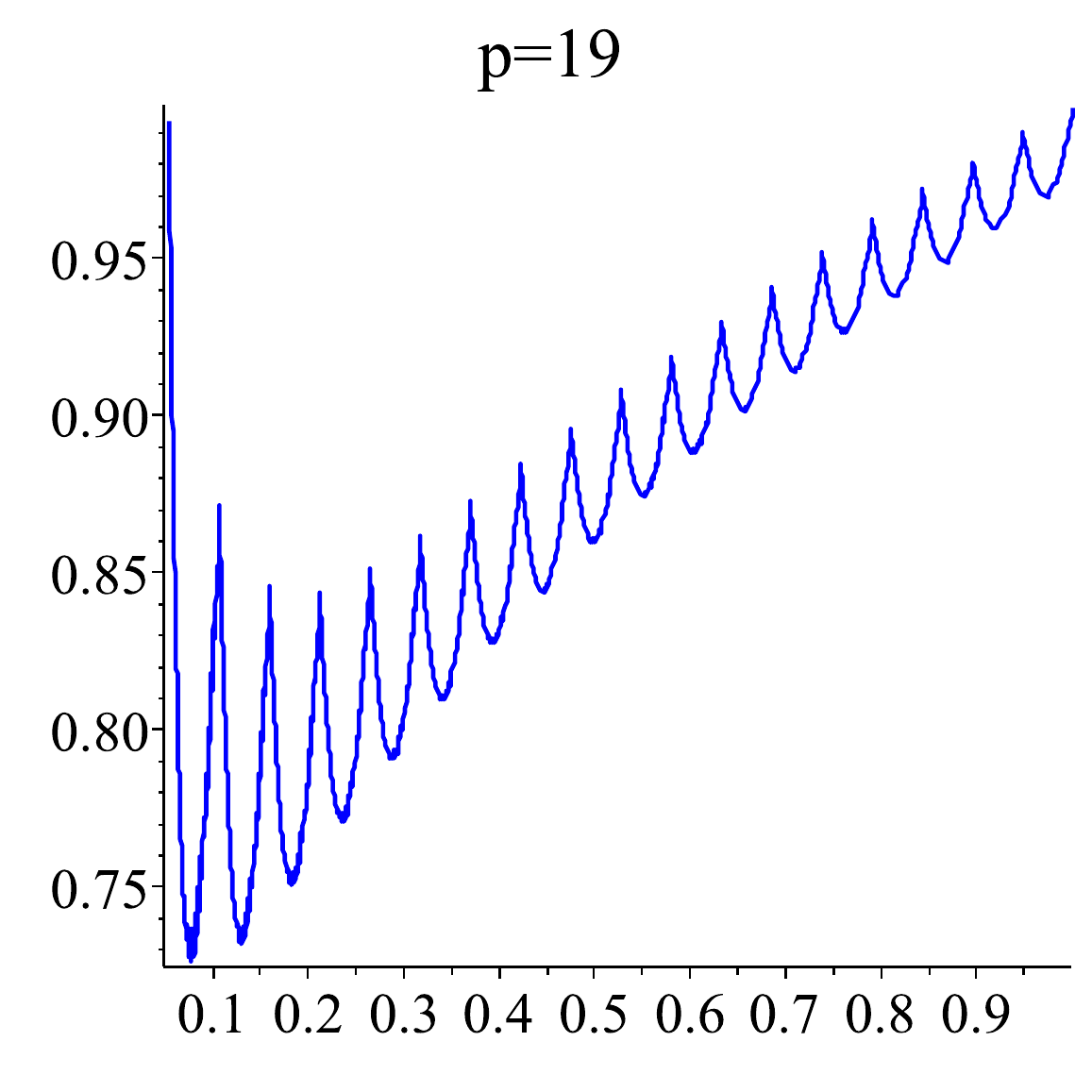}
	\end{tabular}
\end{center}
\caption{A graphical rendering of $G(s)$ for $p=13,17,19$.}
\label{G13}
\end{figure} 

\begin{figure}
\begin{center}	
	\begin{tabular}{ccc}
		\includegraphics[scale=0.2]{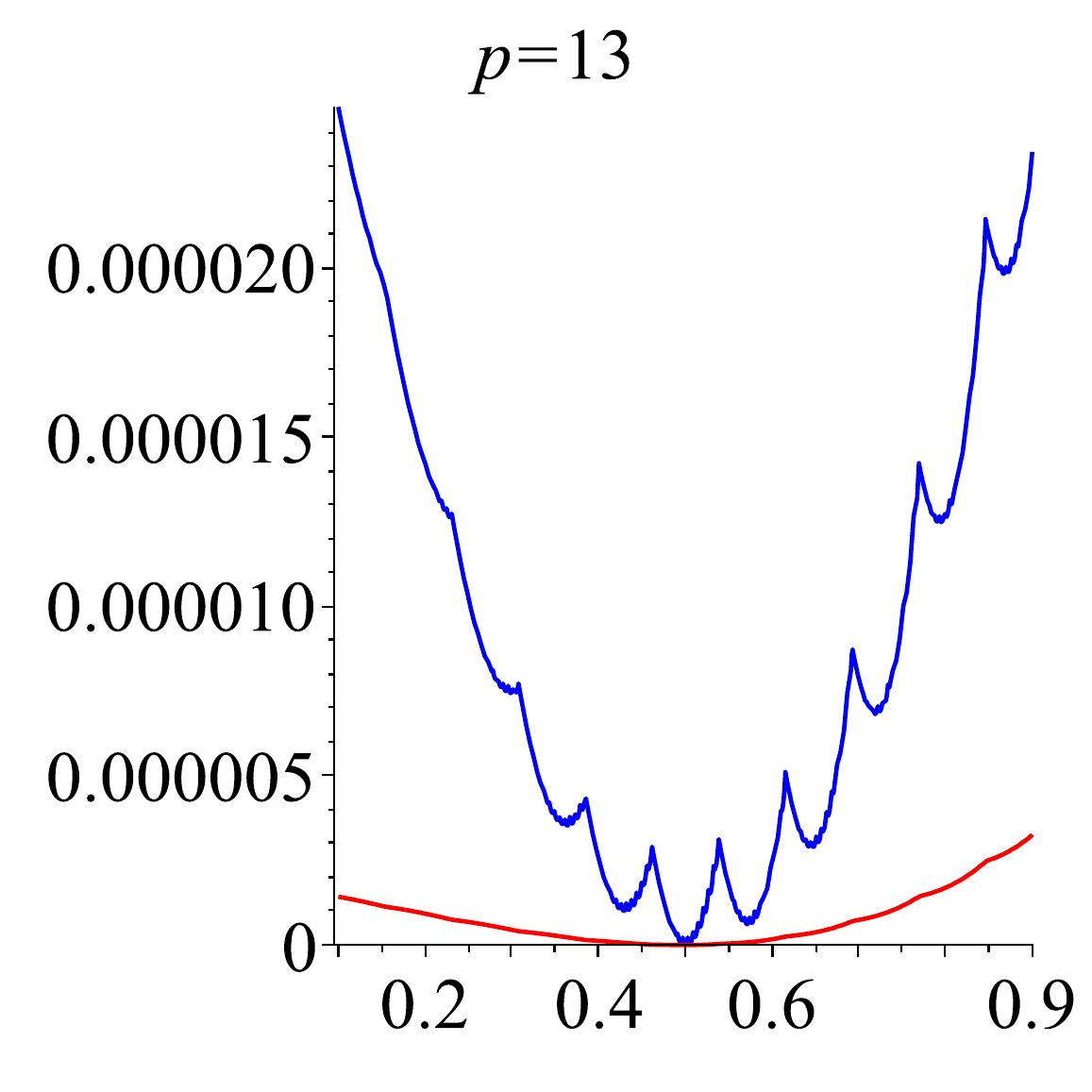}
		& \includegraphics[scale=0.2]{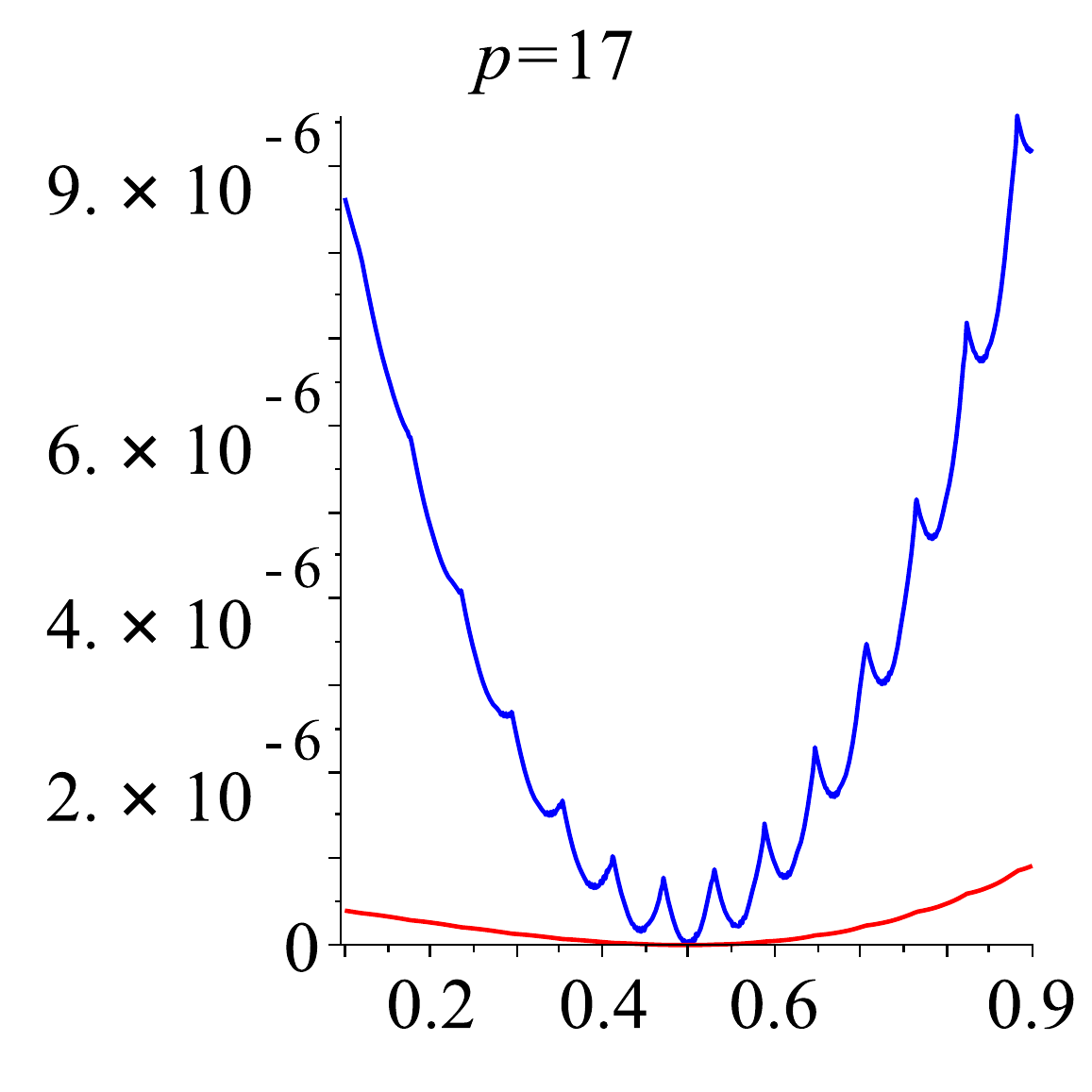}
		& \includegraphics[scale=0.2]{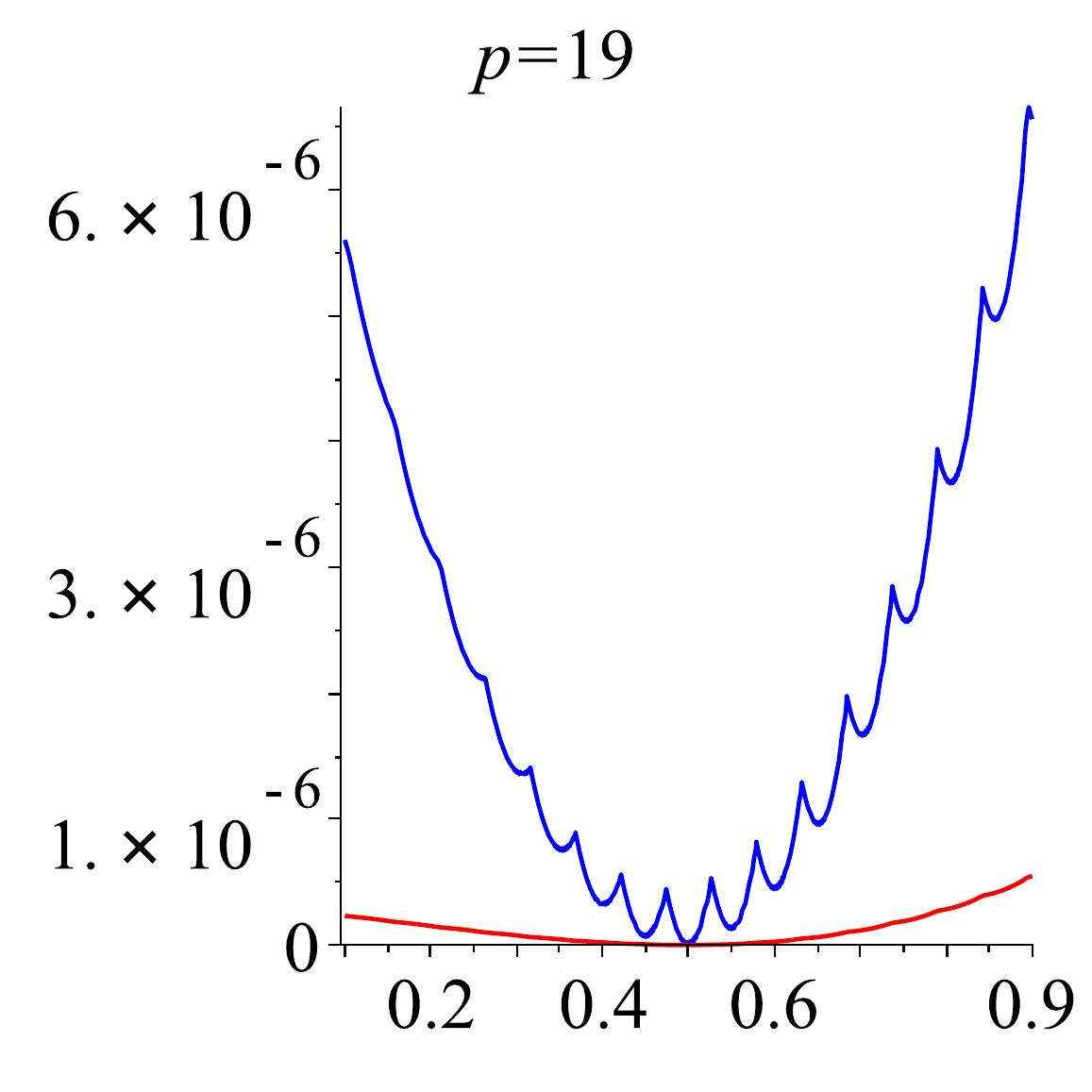} \\
		\includegraphics[scale=0.2]{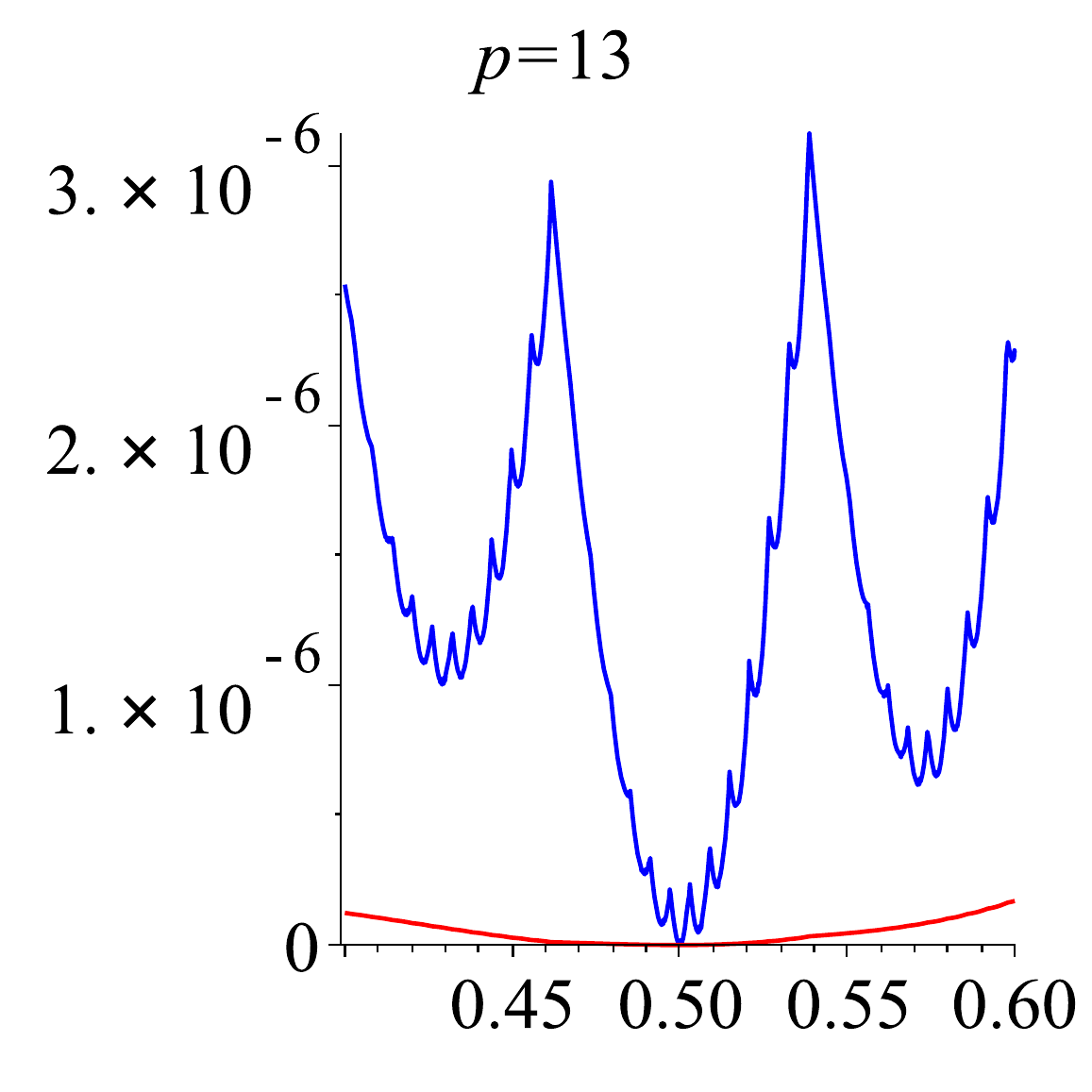}
		& \includegraphics[scale=0.2]{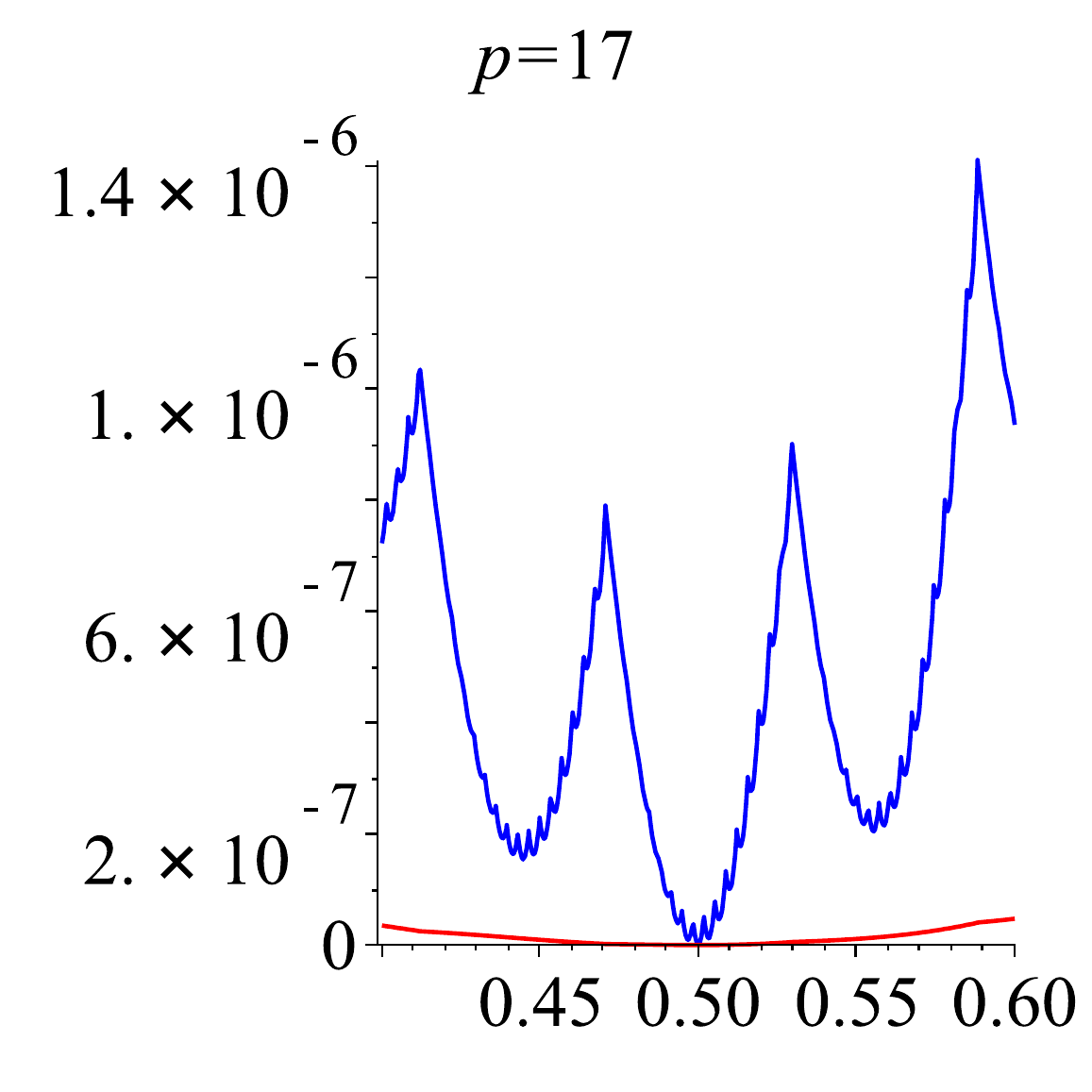}
		& \includegraphics[scale=0.2]{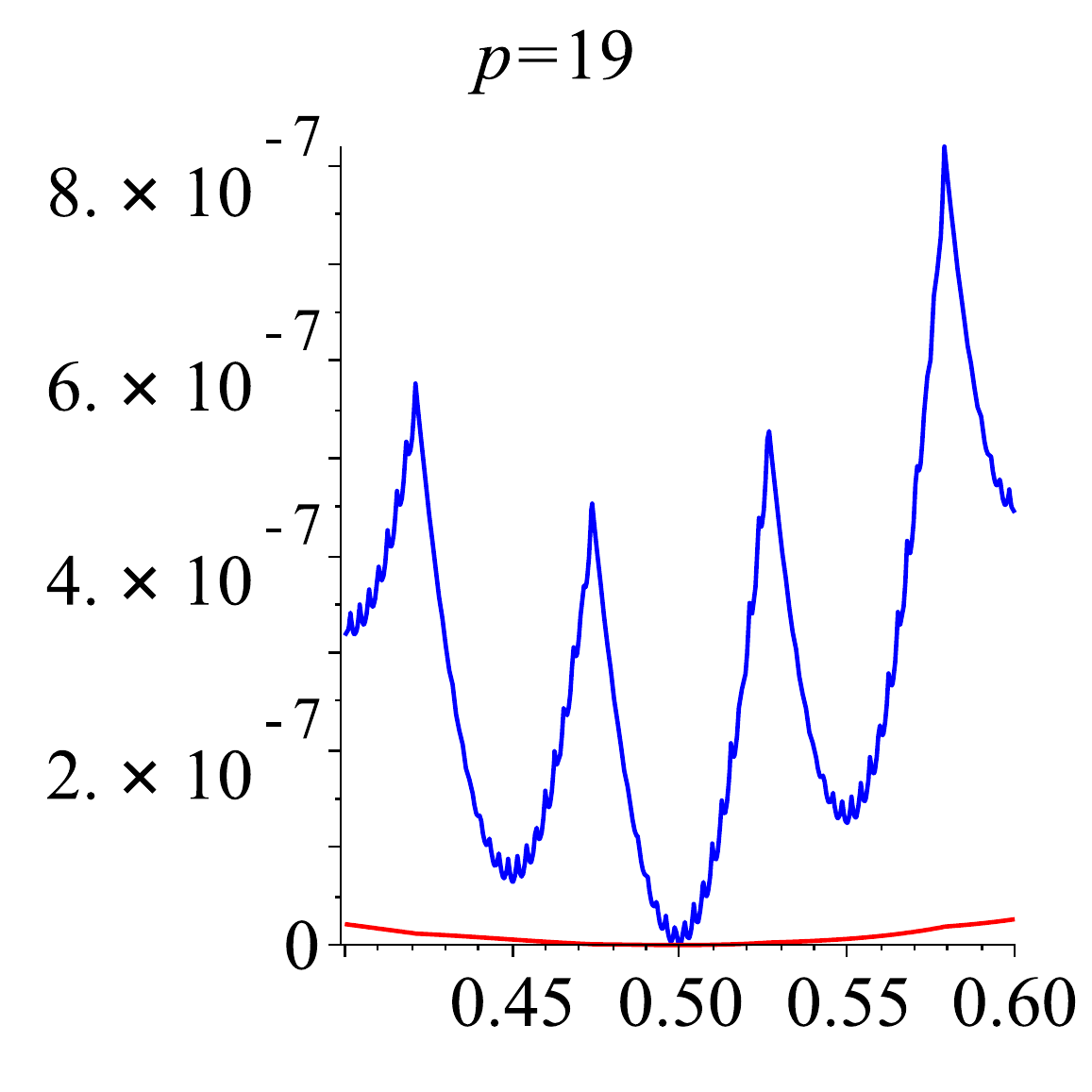}
	\end{tabular}
\end{center}
\caption{$\mathcal{Q}_{s_2}(t)$ (in blue) vs $E_{s_2,0}(t)$ (in red) 
for $p=13,17,19$.} \label{Q13}
\end{figure} 

\section{$p\ge127$}
\label{S:large-p}

In this section, we discuss briefly the extension of our approach to 
larger primes. 

\subsection{$p=127,\dots,2221$}\label{SS:127+}

The same approach used so far is readily extended to primes of larger
values. As far as our numerical check was conducted, the minimum 
$G(s)$ 
is always reached at
$s=\hat{s}_p=\frac{2\xi+1}{2p}-\frac{\eta}{p^2}$ for a suitable choice of
$(\xi,\eta)$; consequently, then $\gb_p$ is given by \eqref{E:Bml}.
In general, the first-stage partition can be chosen by
standard binary search. The following table shows the choices of
$(\xi,\eta)$ for $p\le 2221$.

\begin{table}[!ht]
\begin{center}
\renewcommand{\arraystretch}{1.5}	
\begin{tabular}{cccccc}
$3$--$7$ & $11$--$23$ & $29$ 
& $31$--$53$ & $59$--$79$ & $83$--$107$ \\ \hline 
$(1,0)$ & \textcolor{red}{$(1,1)$} & \textcolor{red}{$(2,1)$} 
& $(2,2)$ & $(2,3)$ & $(2,4)$ \\
$109$--$113$ & $127$--$139$ & $149$--$173$ 
& $179$--$199$ & $211$--$241$ & $251$--$277$ \\ \hline
\textcolor{red}{$(2,5)$} & \textcolor{red}{$(3,5)$} & $(3,6)$ 
& $(3,7)$ & $(3,8)$ & $(3,9)$ \\
$281$--$311$ & $313$--$347$ & $349$--$383$ 
& $389$--$419$ & $421$--$449$ & $457$--$487$ \\ \hline
$(3,10)$ & $(3,11)$ & $(3,12)$
& $(3,13)$ & $(3,14)$ & \textcolor{red}{$(3,15)$}\\
$491$--$509$ & $521$--$547$ & $557$--$587$
& $593$--$619$ & $631$--$661$ & $673$--$701$\\ \hline
\textcolor{red}{$(4,15)$} & $(4,16)$ & $(4,17)$ 
& $(4,18)$ & $(4,19)$ & $(4,20)$ \\	
$709$--$743$ & $751$--$787$ & $797$--$829$
& $839$--$863$ & $877$--$911$ 
& $919$--$953$ \\ \hline
$(4,21)$ & $(4,22)$ & $(4,23)$ 
& $(4,24)$ & $(4,25)$ & $(4,26)$ \\
$967$--$991$ & $997$--$1033$ & $1039$--$1069$
& $1087$--$1117$ & $1123$--$1163$ 
& $1171$--$1201$ \\ \hline
$(4,27)$ & $(4,28)$ & $(4,29)$
& $(4,30)$ & $(4,31)$ & $(4,32)$ \\
$1213$--$1249$ & $1259$--$1291$
& $1297$--$1327$ & $1361$--$1373$ 
& $1381$--$1423$ & $1427$--$1459$ \\ \hline	
$(4,33)$ & $(4,34)$ & $(4,35)$
& $(4,36)$ & $(4,37)$ & $(4,38)$\\
$1471$--$1511$ & $1523$--$1553$ 
& $1559$--$1597$ & $1601$--$1637$
& $1657$--$1669$ & $1693$--$1723$ \\ \hline
$(4,39)$ & $(4,40)$ & $(4,41)$ 
& $(4,42)$ & $(4,43)$ & $(4,44)$ \\ 
$1733$--$1777$ & $1783$--$1811$
& $1823$--$1867$ & $1871$--$1907$
& $1913$--$1951$ & $1973$--$1987$ \\ \hline
$(4,45)$ & $(4,46)$ & $(4,47)$
& $(4,48)$ & $(4,49)$ & \textcolor{red}{$(4,50)$} \\
$1993$--$2003$ & $2011$--$2039$ & $2053$--$2089$
& $2099$--$2141$ & $2143$--$2179$ & $2203$--$2221$ \\ \hline
\textcolor{red}{$(5,49)$} & \textcolor{red}{$(5,50)$}
& $(5,51)$ & $(5,52)$ & $(5,53)$ & $(5,54)$
\end{tabular}	
\end{center}
\caption{The optimal pair $(\xi,\eta)$ at which $G(s)$ reaches its minimum at $s=\frac{2\xi+1}{2p} -\frac{\eta}{p^2}$ for odd primes $p=3,\dots,2221$.}
\label{T:2221}
\end{table}

We list the partitions used in our numerical check for $p=127$,
$p=491$ and $p=1993$ for which $\xi$ jumps from $i$ to $i+1$, and the
minimum of $G$ is attained with the choices $(\xi,\eta)=(3,5)$,
$(4,15)$ and $(5,49)$, respectively. For simplicity, we use the
notation $[a,b]=[x_0,x_1,\dots,x_d]$ to mean the union 
$\cup_{i=1}^d [x_{i-1},x_i]$ with $x_0=a$ and $x_d=b$.
\begin{center}
\renewcommand{\arraystretch}{1.5}	
\begin{tabular}{cccc}
\multicolumn{4}{c}{$p=127$: $(\xi,\eta)=(3,5)$} \\ \hline
variable & 
Interval & \makecell{first \\partition} & \makecell{split each \\
into $N$ subintervals\\
(equal spacing)}\\ \hline
\multirow{2}{*}{$t$} &
$[0,\frac12-\frac1{2p}]$ & 
$[0,\frac12-\frac4p,\frac12-\frac1{2p}]$
& $N=500$ \\ &
$[\frac12+\frac1{2p},1]$ &
$[\frac12+\frac1{2p},\frac12+\frac9{2p},1]$ & $N=500$\\ \hline
\multirow{2}{*}{$s$} &
$[\frac1p,\frac{7}{2p}-\frac{11}{2p^2}]$ & 
$[\frac1p,\frac{2.3}{p}, \frac{3.4}{p}, 
\frac{7}{2p}-\frac{11}{2p^2}]$ 
& $N=1000$ \\  &
$[\frac7{2p}-\frac{9}{2p^2}, 1]$ & 
$[\frac7{2p}-\frac{9}{2p^2}, \frac{3.51}{p},
\frac6{p},1]$ & $N=1200$ \\ \hline
\end{tabular}	
\end{center}

\begin{center}
\renewcommand{\arraystretch}{1.5}	
\begin{tabular}{cccc}
\multicolumn{4}{c}{$p=491$: $(\xi,\eta)=(4,15)$} \\ \hline	
variable & 
Interval & \makecell{first \\partition} & \makecell{split each \\
into $N$ subintervals\\
(equal spacing)}\\ \hline
\multirow{2}{*}{$t$} &
$[0,\frac12-\frac1{2p}]$ & 
$[0,0.46,\frac12-\frac5{2p},\frac12-\frac1{2p}]$
& $N=1000$ \\ &
$[\frac12+\frac1{2p},1]$ &
$[\frac12+\frac1{2p},\frac12+\frac{2.1}{p}
,0.53,1	]$ & $N=1000$\\ \hline
\multirow{2}{*}{$s$} &
$[\frac1p,\frac{4.5}{p}-\frac{15.5}{p^2}]$ & 
$[\frac1p,\frac{3.4}{p},\frac{3.5}p, \frac{4.45}{p}
,\frac{4.5}{p}-\frac{15.5}{p^2}]$ 
& $N=5000$ \\  &
$[\frac{4.5}{p}-\frac{14.5}{p^2}, 1]$ & 
$[\frac{4.5}{p}-\frac{14.5}{p^2}, 
\frac{4.5}{p}-\frac{6}{p^2},\frac{5}{p},
\frac8{p},1]$ & $N=5000$ \\ \hline
\end{tabular}	
\end{center}

\begin{center}
\renewcommand{\arraystretch}{1.5}	
\begin{tabular}{cccc}
\multicolumn{4}{c}{$p=1993$: $(\xi,\eta)=(5,49)$} \\ \hline	
variable & 
Interval & \makecell{first \\partition} & \makecell{split each \\
into $N$ subintervals\\
(equal spacing)}\\ \hline
\multirow{2}{*}{$t$} &
$[0,\frac12-\frac1{2p}]$ & 
$[0,0.485,0.498,0.4994,\frac12-\frac1{2p}]$
& $N=4000$ \\ &
$[\frac12+\frac1{2p},1]$ &
$[\frac12+\frac1{2p},0.501,0.504,0.515,1	]$ & $N=3000$\\ \hline
\multirow{2}{*}{$s$} & $[\frac1p, \frac{5.5}{p}-\frac{49.5}{p^2}]$
& $[\frac1p,\frac{4.45}{p}, \frac{4.5}{p},
\frac{5.46}{p},\frac{5.474}{p}
,\frac{5.5}{p}-\frac{49.5}{p^2}]$ 
& $N=12000$ \\  &
$[\frac{5.5}{p}-\frac{48.5}{p^2}, 1]$ & 
$[\frac{5.5}{p}-\frac{48.5}{p^2},
\frac{5.5}{p}-\frac{41}{p^2}, 
\frac{5.6}{p}, \frac{15}{p},1]$ & $N=10000$ \\ \hline
\end{tabular}	
\end{center}

\subsection{Large $p$ asymptotics}
Assuming that $G$ reaches its minimum at $s=\hat{s}_p(\xi,\eta)$ 
for some $(\xi,\eta)$, 
so that the minimum value $\gb_p$ is  given by \eqref{E:Bml},
we give here some simple, not completely rigorous, estimates
of the two parameters $(\xi,\eta)$ and the minimum point $\hat s_p$
for a given large $p$.
We note first that
\begin{align}\label{rhopp}
  \rhop
=\frac{\log(\frac12p(p+1))}{\log p}
=2+ \frac{\log(1+\frac1p)-\log 2}{\log p}
=2-\frac{1}{\log_2p}+O\Bigpar{\frac{1}{p\log p}}.
\end{align}

We see from Table~\ref{T:2221} in \refS{SS:127+} that $\xi$ and $\eta$ both seem to
grow as $p$ grows (although not monotonically in case of $\eta$).
In fact, it is easy to see that at least $\xi+\eta$ must tend to infinity,
because otherwise there would be an infinite subsequence with some fixed
values of $\xi$ and $\eta$, but then it would follow from \eqref{E:Bml}
that as $p\to\infty$ along this subsequence, 
using $\rho_p\to2$ from \eqref{rhopp},
\begin{align}
  \gb_p=B_{\xi,\eta}\to \frac{\xi+1}{2\xi+1}>\frac12,
\end{align}
which contradicts \eqref{wilson0.5}.

We obtain  more precise estimates
by regarding $\xi$ and $\eta$ as continuous variables in \eqref{E:Bml}
and setting the partial derivatives of \eqref{E:Bml} with
respect to $\xi$ and $\eta$ equal to 0.
This yields the equations:
\begin{align}\label{partial=0}
	\begin{cases}\displaystyle
		\frac{(4\xi+3)p^2+(4\xi-4\eta+3)p+2\eta(2\eta-1)}
		{(\xi+1)(2\xi+1)p^2+(\xi+1)(2\xi-4\eta+1)p
		+2\eta(\xi+1)(2\eta-1)}
		-\frac{2\rhop p}{(2\xi+1)p-2\eta}=0,\\
		\displaystyle
		\frac{\rhop}{(2\xi+1)p-2\eta}
		-\frac{2p-4\eta+1}
		{(2\xi+1)p^2+(2\xi-4\eta+1)p+2\eta(2\eta-1)}=0.
	\end{cases}
\end{align}
The positive solution pair, say $(\xi_+,\eta_+)$, already gives a
very good approximation to the true values of $(\xi,\eta)$. Empirically, $(\lfloor \xi_++0.5\rfloor, \lfloor \eta_++0.45\rfloor)$
is identical to the true pair $(\xi,\eta)$ at which $G$ attains the
minimum for primes $p$ from $11$ to $79$, and differs by at most $1$ 
(at either $\xi$ or $\eta$ but not both) for primes up to $7907$. 
For $p$ as large as $p=1,000,003$, such a solution pair gives $(9,13206)$,
while the true minimum of $G$ is reached at $(\xi,\eta)=(9, 13203)$.

For large $p$, 
we may approximate the equations \eqref{partial=0} by ignoring all terms
that are  $O(\frac1p)$ or $O(\frac\eta{p\xi})$ times the leading terms in the various
numerators and 
denominators. (We assume that $\frac\eta{p\xi}$ is small.)
This gives the equations, using the notation $\gD:=\frac1p+\frac\eta{p\xi}$,
\begin{subequations}
  \begin{align}\label{asy2a}
  \frac{4\xi+3}{(\xi+1)(2\xi+1)}&=\frac{2\rhop}{2\xi+1}
\bigpar{1+O(\gD)},
\\\label{asy2b}
\frac{\rhop}{2\xi+1}&=\frac{2-\frac4p\eta}{2\xi+1}\bigpar{1+O(\gD)}
.\end{align}
\end{subequations}
If we further define $\gd_p:=2-\rhop\sim \frac1{\log_2p}$ (see \eqref{rhopp}), then
\eqref{asy2a} yields 
\begin{align}\label{asy3}
(4-2\gd_p)(\xi+1)
=2\rhop(\xi+1)
=(4\xi+3)\bigpar{1+O(\gD)}
=4\xi+3+O(\xi\gD).
\end{align}
Assuming $\xi,\eta=o(p)$, we have $\xi\gD=\frac{\xi+\eta}p=o(1)$ and then
\eqref{asy3} yields 
$2\gd_p\xi=1+o(1)$ and finally, using \eqref{rhopp},
\begin{align}\label{asy4}
  \xi\sim \frac{1}{2\gd_p}
\sim\frac{\log_2p}{2} =  \log_4 p.
\end{align}

Similarly, \eqref{asy2b} yields
\begin{align}\label{asy5}
2-\frac{4\eta}p=\rho_p +O(\gD) = 2-\gd_p+O(\gD)
\end{align}
leading to the empirical approximation for $\eta$:
\begin{align}\label{asy6}
\eta\sim \frac{1}{4}p\gd_p \sim \frac{p}{4\log_2p}.
\end{align}

The values \eqref{asy4} and \eqref{asy6} yield by \eqref{E:hat-sp}
the estimate for the minimum point 
\begin{align}\label{asy7}
  \hat s_p \sim \frac{\xi}{p}\sim\frac{\log_4 p}{p}.
\end{align}

Finally, observe that if
\begin{align}
	x = \sum_{j\geq1}b_{j}p^{-j} = (0.b_1b_2\dots)_p,
\end{align}
then $b_1 = \lfloor{px\rfloor}$, and 
\begin{align}
	b_m = \floor{p^mx} - p\floor{p^{m-1}x}
	= p\{p^{m-1}x\}-\{p^mx\}\qquad(m\ge2).
\end{align}
Thus
\begin{align}
	\varphi(x)
	&=\frac{1}{2}\sum_{j\geq1}\frac{
	\floor{p^jx} - p\floor{p^{j-1}x}}{A^j}
	\prod_{1\le i\le j}
	\left(1+\floor{p^ix} - p\floor{p^{i-1}x}\right)\notag\\
	&=\frac{1+\lfloor{px\rfloor}}2
	\left(\frac{\lfloor px\rfloor}{A}
	+ \sum_{j\geq2}\frac{
	p\{p^{j-1}x\}-\{p^jx\}}{A^j}
	\prod_{2\le i\le j}
	\left(1+p\{p^{i-1}x\}-\{p^ix\}\right)\right)
\end{align}
For large $p$, each term in the sum on the right-hand side is 
asymptotic to 
\begin{align}
  	\frac{2^{j-1}}{p^{j-1}}\,x \{px\}\cdots\{p^{j-2}x\}
	\{p^{j-1}x\}^2\lrpar{1+O\lrpar{p^{-1}}},
\end{align}
for $j\ge2$, and for $j=1$: 
\begin{align}
	\frac{\floor{px}(\floor{px}+1)}{p(p+1)}
	= x^2 + \frac{x(1-x-2\{px\})}{p}+O\lrpar{p^{-2}}.
\end{align}
We then obtain
\begin{align}
	\varphi(x) 
	= x^2 &+ \frac{x(1-x-2\{px\}+2\{px\}^2)}{p}
	+O\bigl(p^{-2}\bigr).
\end{align}
Then 
\begin{align}
	G(x) = x^{2-\rhop}
	+ \frac{x^{1-\rhop}(1-x-2\{px\}+2\{px\}^2)}{p}
	+\cdots,
\end{align}
where the piecewise differentiability of the terms on the right-hand side
might be  
useful in further identifying the true minimum of $G$ for large $p$.

\appendix

\section{The p-ary recurrence}\label{Srec}

In this appendix we study a more general recurrence, using the
methods of \cite{hk17} and \cite{hk24} where binary recurrences are
studied; see also Section 7.1 in the
\href{https://arxiv.org/abs/2210.10968}{earlier version of
\cite{hk24}} on arXiv.

Let $p$ be any integer larger than $1$. Consider the recurrence 
\begin{equation}
	f(n)=\sum_{0\leq j<p}\gamma_{j}
	f\left(\left\lfloor \frac{n+j}{p}\right\rfloor \right)
	\qquad\textrm{for \ensuremath{n\geq p}},
	\label{rec1}
\end{equation}
with given coefficients $\gamma_{0},\ldots,\gamma_{p-1}$ and given
initial values $f(1),\ldots,f(p-1)$ . We assume for simplicity that
$\gamma_{0},\ldots,\gamma_{p-1}>0$. Let
\begin{equation}\label{eq:A}
	A
	:=\sum_{0\leq j<p}\gamma_{j}. 
\end{equation}

\begin{lemma}\label{L1}
Let $\gamma_{0},\ldots,\gamma_{p-1}>0$. Then there exists a unique
strictly increasing continuous function $\varphi$ on $[0,1]$ such
that $\varphi(0)=0,\varphi(1)=1$ and for $j=0,1,\ldots,p-1$,
\begin{equation}
	\varphi(t)
	=\frac{\gamma_{p-j-1}}{A}\varphi(pt-j)
	+\frac{\sum_{p-j\leq i<p}\gamma_i}{A}
	\qquad\mathrm{if}\quad
	\frac{j}{p}\leq t\leq\,\frac{j+1}{p}.
	\label{l1}
\end{equation}
Moreover, we have the explicit formula
\begin{equation}
	\varphi\left(\sum_{j\geq1}b_{j}p^{-j}\right)
	=\sum_{j\geq1}A^{-j}
	\left(\sum_{p-b_{j}\leq i<p}\gamma_i\right)
	\left(\prod_{1\le i<j}\gamma_{p-1-b_i}\right),
	\label{E2}
\end{equation}
when $b_{j}\in\{0,1,2,\ldots,p-1\}$ for $j\ge1$.
\end{lemma}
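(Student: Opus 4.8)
The plan is to realize $\varphi$ as the unique fixed point of a contraction on a complete metric space, to read off \eqref{E2} by iterating \eqref{l1}, and then to upgrade the resulting monotonicity to \emph{strict} monotonicity by a self-similarity argument. Throughout set $c_j:=A^{-1}\sum_{p-j\le i<p}\gamma_i$, so that $c_0=0$, $c_p:=1$, and the telescoping identity $c_{j-1}+\gamma_{p-j}/A=c_j$ holds for $1\le j\le p$.

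\emph{Existence and uniqueness.} Let $X:=\{f\in C[0,1]:f(0)=0,\ f(1)=1\}$, a closed subset of $C[0,1]$ and hence a complete metric space for the sup norm, and define $T$ on $X$ by $(Tf)(t):=\frac{\gamma_{p-j-1}}{A}f(pt-j)+c_j$ for $\frac{j}{p}\le t\le\frac{j+1}{p}$. The point to check is that $Tf$ is well defined and continuous on $[0,1]$: at an interior node $t=j/p$ the value from the $(j-1)$st branch, $\frac{\gamma_{p-j}}{A}f(1)+c_{j-1}$, and that from the $j$th branch, $\frac{\gamma_{p-j-1}}{A}f(0)+c_j$, both equal $c_j$, because $f(0)=0$, $f(1)=1$, and $c_{j-1}+\gamma_{p-j}/A=c_j$; the same identities give $(Tf)(0)=0$ and $(Tf)(1)=1$, so $T:X\to X$. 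Since the $j$th branch contracts distances by the factor $\gamma_{p-j-1}/A$, and $\max_i\gamma_i<A$ because every $\gamma_i>0$ (there being $p-1\ge1$ further positive terms), $T$ is a $q$-contraction with $q:=\max_i\gamma_i/A\in(0,1)$. By the Banach fixed point theorem there is a unique $\varphi\in X$ with $T\varphi=\varphi$; unravelling $T$, this $\varphi$ is exactly the unique continuous function on $[0,1]$ satisfying \eqref{l1} together with $\varphi(0)=0$, $\varphi(1)=1$.

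\emph{The explicit formula.} I would obtain \eqref{E2} by iterating \eqref{l1}. Writing $t=\sum_{j\ge1}b_jp^{-j}$ with $b_j\in\{0,\dots,p-1\}$, one has $pt-b_1=\sum_{j\ge1}b_{j+1}p^{-j}$, so $n$-fold iteration yields
\[
	\varphi(t)=\sum_{j=1}^{n}\frac{1}{A^{j}}
	\Bigl(\sum_{p-b_j\le i<p}\gamma_i\Bigr)\prod_{1\le i<j}\gamma_{p-1-b_i}
	+\frac{1}{A^{n}}\Bigl(\prod_{1\le i\le n}\gamma_{p-1-b_i}\Bigr)\varphi\bigl(\{p^{n}t\}\bigr).
\]
Because $0\le\varphi\le1$ and $\prod_{i\le n}\gamma_{p-1-b_i}\le(\max_i\gamma_i)^{n}$, the remainder term is at most $q^{n}\to0$, and letting $n\to\infty$ gives \eqref{E2}. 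The only nuisance is that a node $m/p^{k}$ admits two base-$p$ expansions; but since \eqref{l1} holds on the \emph{closed} subinterval $[j/p,(j+1)/p]$, either branch (hence either choice of the ambiguous digit) may be used at each step, and the telescoping identity once more shows the two resulting series agree.

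\emph{Strict monotonicity.} Starting the iteration from $\varphi_0=\mathrm{id}$, observe that if $f\in X$ is strictly increasing then on the $j$th subinterval $Tf$ increases strictly from $c_j$ to $c_j+\gamma_{p-j-1}/A=c_{j+1}$; since $0=c_0<c_1<\dots<c_{p-1}<c_p=1$, the glued function $Tf$ is strictly increasing on all of $[0,1]$. Hence every $\varphi_n=T^{n}\varphi_0$ is strictly increasing, and the uniform limit $\varphi$ is at least non-decreasing. To exclude constancy on a subinterval, iterate \eqref{l1} $k$ times to express $\varphi$ on each $p$-adic interval $[m/p^{k},(m+1)/p^{k}]$ as a \emph{positive} affine rescaling of $\varphi$ on $[0,1]$; as $\varphi(0)=0\ne1=\varphi(1)$, $\varphi$ is non-constant on every such interval, hence on every interval of positive length, so the non-decreasing $\varphi$ is in fact strictly increasing. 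I do not expect any genuine analytic obstacle here: the work is entirely bookkeeping — getting the node/telescoping identities $c_{j-1}+\gamma_{p-j}/A=c_j$ right (this is what makes $T$ land in $X$, what glues the iterated pieces together, and what reconciles the two base-$p$ expansions in \eqref{E2}) and the self-similarity step that converts monotonicity into strict monotonicity.
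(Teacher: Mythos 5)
Your proposal is correct, and the core of the argument — realizing $\varphi$ as the limit of iterates of the operator $T$ defined piecewise by \eqref{l1} — is the same as the paper's; the paper simply runs the iteration by hand (defining $\varphi_{k+1}=T\varphi_k$ starting from $\varphi_0=\mathrm{id}$ and verifying the Cauchy estimate $\|\varphi_{k+1}-\varphi_k\|_\infty\le q^k$ directly), while you package this as a Banach fixed-point argument. The explicit formula is obtained in both cases by unrolling the recursion, though the paper phrases it as an induction on $\varphi_N$ evaluated at finite $p$-adic rationals followed by a continuity/density step, whereas you iterate the functional equation on $\varphi$ itself and control the tail by $q^n$.

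The one place where you genuinely diverge from the paper is strict monotonicity. The paper's route is to observe that the iterates stabilize at $p$-adic rationals — for $t=m/p^M$ one has $\varphi_N(t)=\varphi_M(t)$ for all $N\ge M$, hence $\varphi(t)=\varphi_M(t)$ — and then to use strict monotonicity of the $\varphi_M$ at those points together with density. You instead argue that the limit is nondecreasing and then use the self-similarity $\varphi$ inherits from $k$-fold iteration of \eqref{l1} (a positive affine rescaling on each $p$-adic cell) to rule out constancy on any subinterval, which yields strictness. Both are sound; the paper's version has the side benefit of giving $\varphi$ explicitly at all $p$-adic rationals via $\varphi_M$, while yours is self-contained given the functional equation. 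One small remark on your write-up: when bounding the remainder $A^{-n}\bigl(\prod_{i\le n}\gamma_{p-1-b_i}\bigr)\varphi(\{p^n t\})$, you invoke $0\le\varphi\le1$, which at that stage is not yet established; it is easily repaired either by noting that $T$ preserves the set $\{f\in X:0\le f\le1\}$ and the fixed point lies there, or simply by using that the continuous $\varphi$ is bounded on $[0,1]$, which is all that is needed for the tail to vanish.
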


\begin{proof}
Define $\varphi_{0}(t):=t$ for $t\in[0,1]$, and recursively let
\begin{equation}
	\varphi_{k+1}(t)
	:=\frac{\gamma_{p-j-1}}{A}\varphi_{k}(pt-j)
	+\frac{\sum_{p-j\leq i<p}\gamma_i}{A}
	\qquad\mathrm{if}\quad\frac{j}{p}\leq t\leq\,\frac{j+1}{p},
	\label{l1k}
\end{equation}
for $j=0,1,\ldots,p-1$. (Thus $\varphi_{k+1}$ consists of $p$
suitably scaled copies of $\varphi_{k}$.) Observe first that
$\gf_k(0)=0$ and $\gf_k(1)=1$ by induction. Note also that if
$t_{0}=\frac{j_{0}}{p}$ for $j_{0}\in\{1,\ldots,p-1\}$, then the
definition \eqref{l1k} can be applied with both $j=j_0-1$ and
$j=j_0$. The first choice gives
\begin{equation}
	\varphi_{k+1}\left(t_{0}\right)
	=\frac{\gamma_{p-j_{0}}}{A}\varphi_{k}(1)
	+\frac{\sum_{p-j_{0}+1\leq i<p}\gamma_i}{A}
	=\frac{\sum_{p-j_{0}\leq i<p}\gamma_i}{A},
\end{equation}
and the second one gives
\begin{equation}
	\varphi_{k+1}\left(t_{0}\right)
	=\frac{\gamma_{p-j_{0}-1}}{A}\varphi_{k}(0)
	+\frac{\sum_{p-j_{0}\leq i<p}\gamma_i}{A}
	=\frac{\sum_{p-j_{0}\leq i<p}\gamma_i}{A}.
\end{equation}
Since these are equal, the definition (\ref{l1k}) is consistent. It
is now obvious by induction that $\varphi_{k}$ is continuous and
strictly increasing.

We claim that
\begin{equation}
	\left|\varphi_{k+1}(t)-\varphi_{k}(t)\right|
	\leq\left(\frac{\max_{0\leq j<p}\gamma_{j}}{A}\right)^{k}
	\qquad\textrm{for all \ensuremath{k\geq0} and }t\in[0,1].
	\label{l1c}
\end{equation}
We prove this by induction. The case $k=0$ is clear, since
$\left|\varphi_1(t)-\varphi_{0}(t)\right|\leq1$. Assume now that 
(\ref{l1c}) holds for $k-1$. If $\frac{j}{p}\leq t\leq\, 
\frac{j+1}{p}$ then
\begin{equation}
	\begin{aligned}[b]
		\left|\varphi_{k+1}(t)-\varphi_{k}(t)\right| 
		&    =\frac{\gamma_{p-j-1}}{A}
		\left|\varphi_{k}(pt-j)-\varphi_{k-1}(pt-j)\right|\\
		& \leq\frac{\gamma_{p-j-1}}{A}
		\left(\frac{\max_{0\leq j<p}\gamma_{j}}{A}\right)^{k-1}\\
		& \leq\left(\frac{\max_{0\leq j<p}\gamma_{j}}{A}\right)^{k}.
	\end{aligned}
\end{equation}
Thus, (\ref{l1c}) holds, and since $\max_{0\leq j<p}\gamma_{j}/A<1$,
it follows that the sequence $\varphi_{k}$ converges uniformly to a
function $\varphi:\oi\to\oi$. By (\ref{l1k}), $\varphi$ satisfies
(\ref{l1}). Since each $\varphi_{k}$ is continuous and strictly
increasing, the limiting function $\varphi$ is continuous and
non-decreasing.

We next prove by induction that 
\begin{equation}
	\varphi_{N}^{}\left(\sum_{1\leq j\leq N}b_{j}p^{-j}\right)
	=\sum_{1\leq j\leq N}A^{-j}
	\left(\sum_{p-b_{j}\leq i<p}\gamma_i\right)
	\left(\prod_{1\le i<j}\gamma_{p-1-b_i}\right),
	\label{E1}
\end{equation}
for any $N\ge0$, where $b_{j}\in\{0,1,2,\ldots,p-1\}$. This is
trivial for $N=0$. Suppose that \eqref{E1} holds for $N$. Let
$t=\sum_{1\leq j\leq N+1}b_{j}p^{-j}$, where
$b_{j}\in\{0,1,2,\ldots,p-1\}$. Then, by \eqref{l1k},
	\begin{align}
		\varphi_{N+1}^{}(t) 
		&=\frac{\gamma_{p-b_1-1}}{A}
		\varphi_{N}\left(pt-b_1\right)
		+\frac{\sum_{p-b_1\leq i<p}\gamma_i}{A}\notag\\
		&=\frac{\gamma_{p-b_1-1}}{A}
		\left(\sum_{1\leq j\leq N}A^{-j}
		\left(\sum_{p-b_{j+1}\leq i<p}\gamma_i\right)
		\left(\prod_{1\le i<j}\gamma_{p-1-b_{i+1}}\right)\right)
		+\frac{\sum_{p-b_1\leq i<p}\gamma_i}{A}\notag\\
		&=\sum_{2\leq j\leq N+1}A^{-j}
		\left(\sum_{p-b_{j}\leq i<p}\gamma_i\right)
		\left(\prod_{1\le i<j}\gamma_{p-1-b_i}\right)
		+\frac{\sum_{p-b_1\leq i<p}\gamma_i}{A}\notag\\
		&=\sum_{1\leq j\leq N+1}A^{-j}
		\left(\sum_{p-b_{j}\leq i<p}\gamma_i\right)
		\left(\prod_{1\le i<j}\gamma_{p-1-b_i}\right).
	\end{align}
Hence \eqref{E1} holds for $N+1$, and thus it holds in general by 
induction.

For any $p$-adic rational $t=\sum_{1\leq j\leq M}b_{j}p^{-j}$, let
$b_j:=0$ for $j>M$ and apply \eqref{E1} with $N\ge M$. Letting
$N\to\infty$, we see that \eqref{E2} holds for $t$. Since we have
shown that $\gf$ is continuous, it follows that \eqref{E2} holds in
general, for any $\sum_{1\leq j<\infty}b_{j}p^{-j}$. Furthermore, it 
follows from \eqref{E1} that for every $N\ge M$, we have
\begin{equation}
	\varphi_{N}^{}(t)
	=\varphi_{M}^{}(t).
\end{equation}
Thus $\varphi(t) =\lim_{N\rightarrow\infty} \varphi_{N}(t) =\varphi_{M}(t)$. Accordingly, 
\begin{equation}
	\varphi(t)
	=\varphi_{N}(t)
	<\varphi_{N}(t+p^{-N})
	=\varphi(t+p^{-N}).
\end{equation}
This shows that $\varphi$ is strictly increasing on $p$-adic
rationals. In general, for $0\leq s_1<s_2\leq1$, there exist
$p$-adic rationals $t_1$ and $t_2$ such that $s_1\leq
t_1<t_2\leq s_2$. Then
\begin{equation}
	\varphi(s_1)
	\leq\varphi(t_1)
	<\varphi(t_2)
	\leq\varphi(s_2).
\end{equation}
Consequently, $\varphi$ is strictly increasing. 
\end{proof}

We next extend $f(n)$ to a function of a real variable $x\geq1$ by 
\begin{equation}
	f(n+t)
	:=(1-\varphi(t))f(n)+\varphi(t)f(n+1)
	\label{real}
\end{equation}
for $n\geq1$ and $0\leq t\leq1$. 

\begin{lemma}\label{L2}
Assume that the recurrence \eqref{rec1} holds. Then
\begin{equation}
	f(x)
	=Af\left(\frac{x}{p}\right)
	\qquad\textrm{for all real }x\geq p.
	\label{l2a}
\end{equation}
\end{lemma}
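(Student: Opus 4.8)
The plan is to prove \eqref{l2a} first for integer $x$ and then bootstrap to all real $x\ge p$ by means of the interpolation \eqref{real}. As a preliminary, I would record the values $\varphi(r/p)=A^{-1}\sum_{p-r\le i<p}\gamma_i$ for integers $0\le r\le p$ (with $\varphi(0)=0$, $\varphi(1)=1$); this is immediate from \eqref{l1} taken at $t=r/p$ together with $\varphi(0)=0$, or directly from \eqref{E2}. Equivalently, $A\varphi(r/p)=\sum_{p-r\le j<p}\gamma_j$ and $A(1-\varphi(r/p))=\sum_{0\le j\le p-1-r}\gamma_j$.

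\emph{Integer case.} Fix an integer $n\ge p$ and write $n=kp+r$ with $0\le r<p$, so that $k=\lfloor n/p\rfloor\ge1$. Since $\lfloor(n+j)/p\rfloor=k$ for $0\le j\le p-1-r$ and $\lfloor(n+j)/p\rfloor=k+1$ for $p-r\le j\le p-1$, the recurrence \eqref{rec1} collapses to $f(n)=\bigl(\sum_{0\le j\le p-1-r}\gamma_j\bigr)f(k)+\bigl(\sum_{p-r\le j<p}\gamma_j\bigr)f(k+1)=A\bigl[(1-\varphi(r/p))f(k)+\varphi(r/p)f(k+1)\bigr]$ by the preliminary identities. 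On the other hand $n/p=k+r/p$ with $k\ge1$ and $r/p\in[0,1)$, so by \eqref{real} the bracket is exactly $f(n/p)$; hence $f(n)=Af(n/p)$ for every integer $n\ge p$.

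\emph{Real case.} Let $x\ge p$ be real, set $m=\lfloor x\rfloor\ge p$ and $\theta=x-m\in[0,1)$. By \eqref{real}, $f(x)=(1-\varphi(\theta))f(m)+\varphi(\theta)f(m+1)$, and applying the integer case to $m$ and to $m+1$ (both $\ge p$) gives $f(x)=A\bigl[(1-\varphi(\theta))f(m/p)+\varphi(\theta)f((m+1)/p)\bigr]$. Writing $m=kp+r$ and expanding $f(m/p)$ and $f((m+1)/p)$ by \eqref{real} as convex combinations of $f(k)$ and $f(k+1)$ with weights $\varphi(r/p)$ and $\varphi((r+1)/p)$, one collects $f(x)=A\bigl[(1-Q)f(k)+Qf(k+1)\bigr]$ with $Q:=(1-\varphi(\theta))\varphi(r/p)+\varphi(\theta)\varphi((r+1)/p)$. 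Since $x/p=k+(r+\theta)/p$ with $(r+\theta)/p\in[0,1)$, \eqref{real} gives $f(x/p)=(1-\varphi((r+\theta)/p))f(k)+\varphi((r+\theta)/p)f(k+1)$, so it suffices to check $Q=\varphi((r+\theta)/p)$. This last identity is precisely \eqref{l1}: taking $j=r$ and $t=(r+\theta)/p\in[r/p,(r+1)/p]$ there, one has $pt-j=\theta$, so $\varphi((r+\theta)/p)=\frac{\gamma_{p-r-1}}{A}\varphi(\theta)+\frac1A\sum_{p-r\le i<p}\gamma_i$; the preliminary values yield $\frac{\gamma_{p-r-1}}{A}=\varphi((r+1)/p)-\varphi(r/p)$ and $\frac1A\sum_{p-r\le i<p}\gamma_i=\varphi(r/p)$, which rearrange to $Q$, completing the argument.

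All the computations are elementary. The only point requiring a little care is the bookkeeping when $r=p-1$, in which case $(m+1)/p=k+1$ is an integer and \eqref{real} does not literally apply to it; but there $\varphi((r+1)/p)=\varphi(1)=1$, so the same weighted-average formula $f((m+1)/p)=(1-\varphi(1))f(k)+\varphi(1)f(k+1)$ holds trivially. One must also note the routine fact that $k=\lfloor m/p\rfloor\ge1$, so that \eqref{real} may legitimately be invoked at $m/p$, $(m+1)/p$, and $x/p$. I expect this case distinction to be the main, and very minor, obstacle.
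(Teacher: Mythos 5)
Your proof is correct. It uses the same two ingredients as the paper's argument---the interpolation formula \eqref{real} and the functional equation \eqref{l1} for $\varphi$ (via the boundary values $\varphi(r/p)=A^{-1}\sum_{p-r\le i<p}\gamma_i$)---but organizes the computation differently: you first prove $f(n)=Af(n/p)$ for integers $n\ge p$ by folding the recurrence \eqref{rec1} into a two-term combination of $f(k)$ and $f(k+1)$, and then bootstrap to real $x$ by applying the integer case at $m=\lfloor x\rfloor$ and $m+1$, collapsing the resulting weights via \eqref{l1}. The paper instead carries out a single direct chain of algebra starting from the decomposition $x=pn+j+pt$, using the auxiliary rewritings \eqref{l2b} and \eqref{l2c}. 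The content is the same computation; your two-stage version has the advantage of isolating the integer identity $f(n)=Af(n/p)$ as a clean intermediate statement, while the paper's one-shot version is a bit more compact. You were right to flag the boundary case $r=p-1$ where $(m+1)/p$ is an integer; your remark that the convex-combination formula degenerates correctly there (since $\varphi(1)=1$) handles it. One could also note that when $\theta=0$ your ``real case'' simply reduces to the integer case, so no separate check is needed there either.
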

\begin{proof}
Define 
\begin{equation}
	A_{k}^{-}
	:=\sum_{0\leq i<k}\gamma_i
	\qquad\textrm{and}\qquad 
	A_{k}^{+}
	:=\sum_{k\leq i<p}\gamma_i.
\end{equation}
Rewrite (\ref{rec1}) as 
\begin{equation}
	f(pn+j)=A_{p-j}^{-}f(n)+A_{p-j}^{+}f(n+1)
	\label{l2b}
\end{equation}
for $n\geq1$ and $j=0,1,\ldots,p-1$; note that for $j=0$ \eqref{l2b}
is $f(pn)=Af(n)$, and it follows that \eqref{l2b} holds for $j=p$
too. Also rewrite \eqref{l1} as
\begin{equation}
	\gamma_{p-1-j}\varphi(pt)
	=A\varphi\left(\frac{j}{p}+t\right)-A_{p-j}^{+}
	\label{l2c}
\end{equation}
for $0\leq t\leq\frac{1}{p}$ and $j=0,1,\ldots,p-1$. 

Now, for $x\geq p$, write 
\begin{equation}
	x=pn+j+pt,
\end{equation}
where 
\begin{equation}
	n
	=\left\lfloor \frac{x}{p}\right\rfloor ,
	\quad 
	j
	=\left\lfloor x\right\rfloor \ \mathrm{mod \ } p,
	\quad t=\frac{\{x\}}{p}.
\end{equation}
Then, by (\ref{real}), (\ref{l2b}) and (\ref{l2c}),
\begin{align}
	f(x) &
	=f(pn+j+pt) 
	\notag\\
	&=(1-\varphi(pt))f(pn+j)+\varphi(pt)f(pn+j+1)
	\notag\\
	&=(1-\varphi(pt))\left(A_{p-j}^{-}f(n)
	+A_{p-j}^{+}f(n+1)\right)\\
	&\qquad\quad +\varphi(pt)
	\left(A_{p-j-1}^{-}f(n)+A_{p-j-1}^{+}f(n+1)\right)
	\notag\\
	&=\left(A_{p-j}^{-}f(n)+A_{p-j}^{+}f(n+1)\right)
	-\varphi(pt)\gamma_{p-j-1}\left(f(n)-f(n+1)\right)
	\notag\\
	&=\left(A_{p-j}^{-}f(n)+A_{p-j}^{+}f(n+1)\right)
	-\left(A\varphi\left(\frac{j}{p}+t\right)
	-A_{p-j}^{+}\right)\left(f(n)-f(n+1)\right)
	\notag\\
	&=Af(n)-A\varphi\left(\frac{j}{p}+t\right)f(n)
	+A\varphi\left(\frac{j}{p}+t\right)f(n+1)
	\notag\\
	&=  A\left(\left(1-\varphi\left(\frac{j}{p}+t\right)\right)f(n)
	+\varphi\left(\frac{j}{p}+t\right)f(n+1)\right)
	\notag\\
	&=Af\left(n+\frac{j}{p}+t\right)
	\notag\\
	&=Af\left(\frac{x}{p}\right),
\end{align}
which proves \eqref{l2a}.
\end{proof}

\begin{theorem}\label{TP}
Assume that the recurrence \eqref{rec1} holds, with
$\gamma_0,\dots,\gamma_{p-1}>0$. Then
\begin{equation}\label{tpa}
	f(n)
	=n^{\varrho}\mathcal{P}\left(\log_pn\right)
	\qquad\textrm{for all }n\geq1,
\end{equation}
where $\varrho:=\log_pA$ and
\begin{equation}\label{tpb}
	\mathcal{P}(t):=A^{-\{t\}}f(p^{\{t\}})
\end{equation}
is a continuous $1$-periodic function. 

Moreover, if the initial values satisfy
\begin{equation}
	f(j)
	=\sum_{p-j\leq i<p}\gamma_i
	\qquad\textrm{for }j=1,\ldots,p-1
	\label{d1}
\end{equation}
then 
\begin{equation}\label{tpd}
	\mathcal{P}(t)
	=A^{1-\{t\}}\varphi(p^{\{t\}-1}),
\end{equation}
where $\varphi$ is defined in \refL{L1}.
\end{theorem}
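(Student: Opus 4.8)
The plan is to deduce everything from the scaling identity of \refL{L2} --- namely $f(x)=Af(x/p)$ for every real $x\ge p$ --- by iteration, and then to obtain the explicit formula \eqref{tpd} by comparing the interpolated extension \eqref{real} of $f$ on $[1,p]$ with the self-similar function $\varphi$ of \refL{L1}. I expect the only genuinely delicate points to be bookkeeping ones occurring at the ``seams'' $t\in\mathbb Z$ (equivalently $x=p^{k}$): continuity of $\mathcal P$ at integers, and the matching of endpoint values on the last subinterval for the \emph{moreover} part.

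First I would establish \eqref{tpa}--\eqref{tpb}. Fix $n\ge1$, set $t:=\log_pn$ and $k:=\lfloor t\rfloor\ge0$, so that $n/p^{k}=p^{\{t\}}\in[1,p)$. For $0\le j\le k-1$ we have $n/p^{j}=p^{k-j}(n/p^{k})\ge p$, hence \refL{L2} applies at each such argument, and chaining the identities yields $f(n)=A^{k}f(n/p^{k})=A^{k}f(p^{\{t\}})$. Since $A^{t}=A^{\log_pn}=n^{\log_pA}=n^{\varrho}$, we get $A^{k}=A^{t-\{t\}}=n^{\varrho}A^{-\{t\}}$, and therefore $f(n)=n^{\varrho}A^{-\{t\}}f(p^{\{t\}})=n^{\varrho}\mathcal P(\log_pn)$ with $\mathcal P$ as in \eqref{tpb}. (The same computation works verbatim for all real $x\ge1$.) The function $\mathcal P$ depends on $t$ only via $\{t\}$, so it is $1$-periodic; it is continuous off $\mathbb Z$ because $t\mapsto\{t\}$, $u\mapsto p^{u}$ and $f$ (continuous on $[1,\infty)$ by \eqref{real} and the continuity of $\varphi$ from \refL{L1}) all are; and at an integer the left limit of $\mathcal P$ is $A^{-1}f(p)$ while the right limit is $f(1)$, and these coincide by the $x=p$ instance of \refL{L2}. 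Hence $\mathcal P$ is continuous and $1$-periodic, which proves the first assertion.

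For the \emph{moreover} part, note that $\mathcal P(t)=A^{-\{t\}}f(p^{\{t\}})$ and that $p^{\{t\}}$ sweeps $[1,p)$ as $\{t\}$ sweeps $[0,1)$, so \eqref{tpd} is equivalent to the identity $f(x)=A\varphi(x/p)$ for $x\in[1,p]$; by continuity it is enough to verify this on each subinterval $[j,j+1]$, $j=1,\dots,p-1$. There \eqref{real} gives $f(x)=f(j)+\varphi(x-j)\bigl(f(j+1)-f(j)\bigr)$, while \eqref{l1} applied on $[j/p,(j+1)/p]$ gives $A\varphi(x/p)=\sum_{p-j\le i<p}\gamma_i+\gamma_{p-1-j}\,\varphi(x-j)$; under \eqref{d1} one has $f(j)=\sum_{p-j\le i<p}\gamma_i$ --- which is exactly $A\varphi(j/p)$ by the $t=j/p$ case of \eqref{l1} --- and $f(j+1)-f(j)=\gamma_{p-1-j}$, so the two sides agree. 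I expect the one subtle spot to be the last subinterval $[p-1,p]$: here $f(p)$ must be read off from the recurrence (equivalently from \refL{L2}, $f(p)=Af(1)$), and the matching $f(p)-f(p-1)=\gamma_{0}$ together with $f(p)=A\varphi(1)=A$ both require $f(1)=1$, i.e. $\gamma_{p-1}=1$ --- which is automatic in the situation of \refT{T3}, where $f=F_p$ with $\gamma_i=p-i$ and $F_p(1)=1$. Collecting these facts yields \eqref{tpd}. Apart from this endpoint check and the continuity-at-integers argument above, the whole proof is a routine consequence of \refL{L1} and \refL{L2}.
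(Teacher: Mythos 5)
Your proposal follows the same route as the paper's proof: iterate \refL{L2} to reduce $f(n)$ to its value on $[1,p)$, observe that $\mathcal P$ depends on $t$ only through $\{t\}$ and verify continuity at integers via $f(p)=Af(1)$, and then, for the ``moreover'' part, compare the interpolant \eqref{real} against the self-similarity relation \eqref{l1} for $\varphi$ on each subinterval $[j,j+1]\subset[1,p]$. The bookkeeping is identical to the paper's.

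Your flag of the last subinterval is, in fact, a genuine point that the paper glosses over. The paper writes $f(j+1)-f(j)=\gamma_{p-1-j}$ uniformly for $j=1,\dots,p-1$, but for $j=p-1$ the value $f(p)$ is not supplied by \eqref{d1}: by \refL{L2} and \eqref{d1}, $f(p)=Af(1)=A\gamma_{p-1}$, so $f(p)-f(p-1)=A\gamma_{p-1}-\sum_{1\le i<p}\gamma_i$, which equals $\gamma_0$ if and only if $\gamma_{p-1}=1$. Without this extra hypothesis \eqref{tpd} actually fails; for instance with $p=2$, $\gamma_0=1$, $\gamma_1=2$, $A=3$ and $f(1)=\gamma_1=2$, one finds $\lim_{t\nearrow 1}\mathcal P(t)=A^{-1}f(p)=f(1)=2$, whereas the right-hand side of \eqref{tpd} tends to $\varphi(1)=1$. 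The condition $\gamma_{p-1}=1$ is satisfied in the only application (\refT{T3}, where $\gamma_i=p-i$), and is essentially acknowledged in \refR{Rinitial}, but strictly speaking it should be added to the hypotheses of the ``moreover'' part of \refT{TP}. Your diagnosis of this, together with the rest of the argument, is correct.
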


\begin{proof}
Since $f(x)$ is continuous, $\cP(t)$ is continuous on $[0,1)$, and by
(\ref{l2a})
\begin{equation}
	\lim_{t\nearrow1}\mathcal{P}(t)
	=A^{-1}f(p)=f(1)=\mathcal{P}(0)
	=\mathcal{P}(1),
\end{equation}
which shows that $\mathcal{P}(t)$ is a continuous 1-periodic 
function. For $y\in[1,p)$
\begin{equation}
	f(y)
	=f(p^{\log_py})
	=A^{\log_py}\left(A^{-\log_py}f(p^{\log_py})\right)
	=A^{\log_py}\mathcal{P}\left(\log_py\right),
\end{equation}
and, for each $x\geq1$, 
\begin{equation}
	p^{-\lfloor \log_px\rfloor}x\in[1,p).
\end{equation}
By applying \refL{L2} repeatedly $\lfloor \log_px\rfloor$ times:
\begin{equation}
\begin{split}
	f(x)
	&=A^{\lfloor \log_px\rfloor}
	f\left(p^{-\lfloor \log_px\rfloor}x\right)\\
	&=A^{\lfloor \log_px\rfloor+\log_p
	\left(p^{-\lfloor \log_px\rfloor}x\right)}
	\mathcal{P}\left(\log_p
	\left(p^{-\lfloor \log_px\rfloor}x\right)\right)\\
	&=A^{\log_px}\mathcal{P}\left(\log_px\right).
\end{split}	
\end{equation}
Thus we get
\begin{equation}
	f(n)
	=A^{\log_pn}\mathcal{P}\left(\log_pn\right)
	=p^{\varrho\log_pn}\mathcal{P}\left(\log_pn\right)
	=n^{\varrho}\mathcal{P}\left(\log_pn\right),
\end{equation}
proving \eqref{tpa}.

Finally, suppose that condition (\ref{d1}) holds. For $1\leq x<p$, let
$j=\left\lfloor x\right\rfloor $. Then, using \eqref{real} and \eqref{l1}, 
\begin{equation}
\begin{aligned}[b]
	f(x) 
	& =f(j)+\varphi(x-j)\left(f(j+1)-f(j)\right)\\
	& =\sum_{p-j\leq i<p}\gamma_i
	+\varphi(x-j)\gamma_{p-j-1}\\
	& =A\left(\frac{\gamma_{p-j-1}}{A}\varphi(x-j)
	+\frac{\sum_{p-j\leq i<p}\gamma_i}{A}\right)\\
	& =A\varphi\left(\frac{x}{p}\right).
\end{aligned}
\end{equation}
Thus we have
\begin{equation}
	\mathcal{P}(t)
	=A^{-\{t\}}f(p^{\{t\}})=A^{1-\{t\}}\varphi(p^{\{t\}-1}).
    \qedhere
\end{equation}
\end{proof}

\begin{corollary}
Assume that the recurrence \eqref{rec1} holds, with
$\gamma_0,\dots,\gamma_{p-1}>0$. Then
\begin{align}
	\sup_{n\ge1}\frac{f(n)}{n^{\varrho}}&
	=\limsup_{n\rightarrow\infty}\frac{f(n)}{n^{\varrho}} 
	=\max_{t\in\oi}\mathcal{P}(t),
	\\
	\inf_{n\ge1}\frac{f(n)}{n^{\varrho}}&
	=\liminf_{n\rightarrow\infty}\frac{f(n)}{n^{\varrho}} 
	=\min_{t\in\oi}\mathcal{P}(t).
\end{align}
\end{corollary}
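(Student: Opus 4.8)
The plan is to read everything off the exact representation $f(n)=n^{\varrho}\mathcal{P}(\log_p n)$ established in \refT{TP}. Dividing by $n^{\varrho}$ and using the $1$-periodicity of $\mathcal{P}$, this says $f(n)/n^{\varrho}=\mathcal{P}(\log_p n)=\mathcal{P}(\frax{\log_p n})$ for every $n\ge1$. The two facts I would then combine are: (i) $\mathcal{P}$ is continuous on the compact interval $\oi$, so $\max_{t\in\oi}\mathcal{P}(t)$ and $\min_{t\in\oi}\mathcal{P}(t)$ are attained; and (ii) the fractional parts $\frax{\log_p n}$, $n\ge1$, are dense in $\oi$ (indeed they can be made to converge to any prescribed $t_0$).

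First I would record the easy half. Since $\frax{\log_p n}\in\oi$, we have $f(n)/n^{\varrho}=\mathcal{P}(\frax{\log_p n})\le\max_{t\in\oi}\mathcal{P}(t)$ for all $n$, hence $\sup_{n\ge1}f(n)/n^{\varrho}\le\max_{t\in\oi}\mathcal{P}(t)$, and a fortiori $\limsup_\ntoo f(n)/n^{\varrho}\le\sup_{n\ge1}f(n)/n^{\varrho}\le\max_{t\in\oi}\mathcal{P}(t)$. For the matching lower bound on the limsup, fix an arbitrary $t_0\in\oi$ and set $n_k:=\floor{p^{k+t_0}}$ for $k\ge1$ (take $n_k:=p^k$ if $t_0\in\set{0,1}$). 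From $p^{k+t_0}-1<n_k\le p^{k+t_0}$ one gets $\log_p n_k=k+t_0+o(1)$ as $\ktoo$, so $\frax{\log_p n_k}\to t_0$; then continuity and periodicity of $\mathcal{P}$ give $f(n_k)/n_k^{\varrho}=\mathcal{P}(\frax{\log_p n_k})\to\mathcal{P}(t_0)$, whence $\limsup_\ntoo f(n)/n^{\varrho}\ge\mathcal{P}(t_0)$. Taking the supremum over $t_0\in\oi$ yields $\limsup_\ntoo f(n)/n^{\varrho}\ge\max_{t\in\oi}\mathcal{P}(t)$, and the chain $\max\le\limsup\le\sup\le\max$ forces equality throughout the first line. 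The second line follows in exactly the same way, with $\sup,\limsup,\max$ replaced by $\inf,\liminf,\min$ and the inequalities reversed (using $\liminf_\ntoo a_n\ge\inf_{n\ge1}a_n$).

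The only step needing care — the (mild) main obstacle — is verifying the claim about $n_k=\floor{p^{k+t_0}}$: that $\log_p n_k-(k+t_0)\to0$, and that for $t_0\in(0,1)$ the integer $n_k$ eventually lies strictly between $p^k$ and $p^{k+1}$, so that $\frax{\log_p n_k}=\log_p n_k-k\to t_0$; the endpoint case $t_0\in\set{0,1}$ is then disposed of separately by the choice $n_k=p^k$, for which $\frax{\log_p n_k}=0$. This is a one-line estimate, and everything else is immediate from \eqref{tpa}--\eqref{tpb} together with the compactness of $\oi$.
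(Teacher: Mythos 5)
Your proof is correct and takes the same approach as the paper: the paper's own proof is just the one-liner ``By \eqref{tpa}, since $\cP(t)$ is a continuous 1-periodic function,'' and your argument supplies the standard details (the chain $\max\le\limsup\le\sup\le\max$ together with the fact that $\frax{\log_p n}$ can be made to converge to any prescribed $t_0\in\oi$) that this one-liner leaves implicit.
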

\begin{proof}
By \eqref{tpa}, since $\cP(t)$ is a continuous 1-periodic function.
\end{proof}
\begin{remark}\label{Rinitial}
In the case that $\gamma_{p-1}=1$, the condition \eqref{d1} on the
initial values is equivalent to assuming that the recurrence
\eqref{rec1} extends to all $n\ge2$, with $f(0)=0$ and $f(1)=1$.
\end{remark}

\bibliographystyle{amsplain}

\end{document}